\documentclass[11pt,oneside]{amsart}
\usepackage{amssymb,amsmath}
\usepackage[margin=1.25in]{geometry}

\usepackage{hyperref} 

\theoremstyle{plain}
\newtheorem{thm}{Theorem}
\newtheorem{prop}[thm]{Proposition}
\newtheorem{lem}[thm]{Lemma}
\newtheorem{cor}[thm]{Corollary}
\newtheorem{rem}[thm]{Remark}
\newtheorem{eg}[thm]{Example}
\newtheorem{defn}[thm]{Definition}
\newtheorem{conj}[thm]{Conjecture}

\usepackage{lipsum}
\usepackage{tikz,mathtools,enumitem,comment,marginnote}
\usetikzlibrary{arrows.meta,positioning}

\newcommand{\nU}{U_{\text{not shorted}}}
\newcommand{\cU}{U_{\text{connected}}}
\newcommand{\bA}{\mathbb{A}}

\newcommand{\El}{\operatorname{Elec}}

\newcommand{\cO}{\mathcal{O}}

\newcommand{\cI}{\mathcal{I}}

\newcommand{\Pio}{\mathring{\Pi}}

\newcommand{\Xo}{\mathring{\chi}}
\newcommand{\Xn}{\chi_n}
\newcommand{\wt}{\text{wt}}
\newcommand{\ov}[1]{\overline{#1}}
\newcommand{\Gr}{{\rm Gr}}

\newcommand{\C}{\mathbb{C}}
\newcommand{\image}{\text{Im}}
\newcommand{\spann}{\text{span}}

\renewcommand{\AA}{\mathbb{A}}
\newcommand{\RR}{\mathbb{R}}
\newcommand{\CC}{\mathbb{C}}
\newcommand{\DD}{\mathbb{D}}
\newcommand{\FF}{\mathbb{F}}

\newcommand{\IG}{\textrm{IG}}
\newcommand{\LG}{\textrm{LG}}

\newcommand{\Mia}[1]{\marginpar{\raggedright\scriptsize\color{orange}Mia: #1}}

\title{Algebraic Geometry of Electroid Varieties}

\author{
  Dawei Shen\address{Department of Mathematics, University of Michigan}\email{dwshen@umich.edu} \and 
  Mia Smith\address{Department of Mathematics, University of Michigan}\email{smithmia@umich.edu} \and 
  David E Speyer \address{Department of Mathematics, University of Michigan}\email{speyer@umich.edu}
}

\keywords{Lagrangian Grassmannian, electrical networks, electroid varieties, positroid varieties, Frobenius splitting}

\begin{document}

\begin{abstract}
Recent work of Lam, Bychkov–Gorbounov–Kazakov–Talalaev, and Chepuri--George--Speyer gave a stratification of the totally nonnegative Lagrangian Grassmannian into electroid cells parameterized by cactus networks, paralleling Postnikov's stratification of the totally nonnegative Grassmannian by positroid cells. Electroid varieties arise as an algebro-geometric extension of electroid cells. The combinatorics of these varieties was studied by Lam in 2018. We build on this work and study the geometric properties of electroid varieties. In analogy to results of Knutson, Lam, and Speyer on positroid varieties, we show that electroid varieties are reduced, irreducible, regular in codimension one, compatibly Frobenius split, and form a stratification. We also show a decomposition of certain electroid varieties as a product of two electroid varieties. As a consequence, the grove measurement map that embeds electroid cells can be extended algebraically to embed an algebraic torus.
\end{abstract}

\maketitle

\section{Introduction}

The study of \textit{stratifications} of the Grassmannian dates back to the work of Schubert in the 19th century.  By stratification of a variety, we mean a partition of the variety into a disjoint union of irreducible locally closed subvarieties, which we also call open strata, such that the Zariski closure of an open stratum is the union of open strata. Schubert's work laid the foundation for the \textit{Bruhat stratification} of the Grassmannian, first introduced by Bruhat for classical groups and later generalized by Chevalley. The Bruhat stratification inspired numerous finer stratifications: the \textit{Richardson stratification} \cite{KL,R} and, most recently, \textit{the positroid stratification} \cite{KLS}. In contrast, the \textit{matroid decomposition of GGMS} \cite{GGMS} does not satisfy the properties of a stratification in the technical sense; namely, the closure of an open stratum is not always a union of open strata.

One can also consider the real points of the Grassmannian as a smooth real manifold, endowed with the analytic topology. Inside the real Grassmannian manifold, we define its \textit{totally nonnegative part} as the semi-algebraic subset where all Pl\"ucker coordinates are nonnegative (or equivalently, have the same sign). In this context, one can also consider \textit{stratifications} using a similar definition as above, with Zariski topology replaced by analytic topology. In 2006, Postinikov stratified the totally nonnegative Grassmannian into \textit{positroid cells} parameterized by plabic graphs \cite{P}. Later in 2011, Knutson, Lam, and Speyer \cite{KLS} stratified the Grassmannian variety into \textit{open positroid varieties}. They call the Zariski closure of open positroid varieties \textit{closed positroid varieties} or just \textit{positroid varieties}. This is an extension of Postnikov's stratification of the totally nonnegative Grassmannian in the sense that restricting the former to its totally nonnegative points recovers the latter.

The positroid stratification has many elegant properties.
A closed positroid variety is the Zariski closure of its positroid cell. (Open) positroid varieties are also projections of (open) Richardson varieties and intersections of cyclically permuted (open) Schubert varieties \cite{KLS}. Moreover, they have particularly nice algebro-geometric properties; they are reduced, irreducible, normal, Cohen-Macaulay, and compatibly Frobenius split \cite{KLS}. 

It is here that the story of positroid varieties intersects with another: the story of planar electrical networks developed by Curtis--Ingerman--Morrow \cite{CIM} and Colin de Verdi\`{e}re--Gitler--Vertigan \cite{CGV}. In 2016, Lam compactified the space of electrical networks, and in-so-doing, formalized its conjectured parallels to the totally nonnegative Grassmannian \cite{Lam2018}. Independent work by Chepuri--George--Speyer and Bychkov--Gorbounov--Kazakov--Talalaev later showed that the compactified space is an isotropic Grassmannian abstractly isomorphic to the Lagrangian Grassmannian \cite{BGKT,CGS2021}. Together with work of Lam, this yields a stratification of the totally nonnegative part of the real points of the Lagrangian Grassmannian into electroid cells parameterized by cactus networks. A natural hope is that this stratification of the totally nonnegative part can be extended to a well-behaved stratification of the Lagrangian Grassmannian variety, parallel to the positroid story \cite{KLS}. 

In \cite{Lam2018}, Lam defined (open) electroid varieties as the intersection between (open) positroid varieties and the Lagrangian Grassmannian.
Each electroid cell is the totally nonnegative real points of a corresponding open electroid variety, and the
open electroid varieties set-theoretically partition the Lagrangian Grassmannian.
However, a fundamental understanding of the algebraic geometry of electroid varieties is still lacking. For example, it was unknown if electroid varieties are Zariski closures of electroid cells or if they form a stratification.

 We build on this rich story and study the combinatorial algebraic geometry of electroid varieties. In analogy to the work of Knutson, Lam, and Speyer in \cite{KLS} on positroid varieties, we prove that electroid varieties are reduced, irreducible, regular in codimension one, compatibly Frobenius split, of the expected dimension, and form a stratification extending the stratification by electroid cells. We construct a splicing isomorphism analogous to \cite{gorsky2025,gorsky2024}. We show that the measurement map which embeds electroid cells to totally nonnegative points in the corresponding open electroid variety can be extended algebraically to embed an algebraic torus. We comment that our approach is fundamentally different from the twist map method by Muller and Speyer  \cite{Muller_2017}. The twist map for the maximal open electroid variety is studied in \cite{george2023}, but the extension to all electroid varieties is unclear. We formally state our results at the end of Section $2$.

\subsection*{Acknowledgements.} D. Shen and M. Smith were partially supported by NSF grant DMS-2348799. D. Speyer was partially supported by NSF grant DMS-2246570. The authors are grateful to Thomas Lam for suggesting this project and meaningful guidance throughout. The authors also wish to thank Terrence George for helpful conversations in early stages of the project.

\section{Background}
\subsection{Electrical networks and cactus networks}

Consider a planar graph $\Gamma$ embedded in a disc $\DD$, with $n$ vertices $v_1$, $v_2$, \ldots, $v_n$ arranged clockwise on the boundary. We think of each edge as an electrical conductor with some fixed resistance in $\RR_{>0}$. Imagine putting voltage $V_i$ at boundary vertex $v_i$ and measuring the resulting current $J_i$ flowing out of vertex $v_i$. The map from $(V_1, V_2, \dots, V_n)$ to $(J_1, J_2, \dots ,J_n)$ is given by an $n \times n$ matrix $L$ called the \emph{response matrix}. The response matrix is symmetric, with rows and columns summing to $0$. Two electrical networks are said to be {\textit{electrically equivalent}} if their response matrices coincide.

\begin{figure}
\centering
    \begin{tikzpicture}
        \draw (-3,0) circle (1);
        \draw (3,0) circle (1);
        \draw (-4,0) node [left] {$v_1$};
        \draw (-2,0) node [right] {$v_2$};
        \draw (2,0) node [left] {$v_1$};
        \draw (4,0) node [right] {$v_2$};
        \fill (-4,0) circle (2pt);
        \draw [thick] (-4,0) -- (-2,0) node[pos=.5, above]{$c$};
        \fill (-2,0) circle (2pt);
        \fill (2,0) circle (2pt);
        \fill (4,0) circle (2pt);
    \end{tikzpicture}
    \caption{The electrical networks with response matrix $L=\begin{bsmallmatrix} -c&c \\ 
c&-c \end{bsmallmatrix}$ for $c>0$ and $c=0$.}
    \label{f:networks with n=2}
\end{figure}

\begin{eg} \label{nIs2}
For $n=2$, all response matrices of electrical networks are of the form
$L=\begin{bsmallmatrix} -c&c \\ 
c&-c \end{bsmallmatrix}$. When $c>0$, $L$ can be realized by the electrical network with a single resistor from $v_1$ to $v_2$ of resistance $c^{-1}$. When $c=0$, $L$ can be realized by the electrical network with two isolated vertices. See Figure \ref{f:networks with n=2}.
\end{eg}

From Example~\ref{nIs2}, we see that the space of possible $2 \times 2$ response matrices (or electrical networks up to equivalence) is isomorphic to $\mathbb{R}_{\geq0}$. Lam~\cite{Lam2018} found a natural way to compactify the space of response matrices, by introducing \emph{cactus networks}. Intuitively, a \emph{cactus network} is an electrical network in which we allow subsets of boundary vertices to be ``shorted" by wires of infinite conductance. Figure \ref{f:cactus} gives two examples of cactus networks. See \cite{Lam2018} for a detailed explanation. Thus, the space of response matrices for $n=2$ cactus networks is $\mathbb{R}_{\geq0}\bigcup \{\infty\}$, homeomorphic to a compact interval. Let $E_n$ denote the space of electrical equivalence classes of (or equivalently, response matrices of) cactus networks with $n$ boundary vertices.
    
\begin{figure}
    \centering
    \begin{tikzpicture}
        \draw (-2.25,0) circle (.75);
        \draw (-3.75,0) circle (.75);
        \fill (-3,0) circle (2pt);
        
        \draw (3,0) circle (1);
        \draw (4.5,0) circle (.5);
        \draw (3,1.25) circle (.25);
        \fill (4,0) circle (2pt);
        \fill (5,0) circle (2pt);
        \fill (2,0) circle (2pt);
        \fill (3,1) circle (2pt);
        \fill (3,-1) circle (2pt);
        \draw [thick] (4,0) -- (5,0);
        \draw [thick] (4,0) -- (3,-1);
        \draw [thick] (3,-1) -- (2,0);

        \draw (4,0) node [yshift = .3cm] {$v_1,v_3$};
        \draw (5,0) node [xshift = .3cm] {$v_2$};
        \draw (3,-1) node [yshift = -.3cm] {$v_4$};
        \draw (2,0) node [xshift = -.3cm] {$v_5$};
        \draw (3,1) node [xshift = -.7cm] {$v_6,v_7$};
        \draw (-3,0) node [xshift = -.5cm] {$v_1,v_2$};
        
    \end{tikzpicture}
    \caption{The cactus network on the left has boundary vertices $v_1$ and $v_2$ shorted and realizes the limit of the response matrix $L=\begin{bsmallmatrix} -c&c \\ c&-c \end{bsmallmatrix}$ as $c\rightarrow\infty$. The cactus network on the left has boundary vertices $v_1$ and $v_3$ shorted and $v_6$ and $v_7$ shorted.}
    \label{f:cactus}
\end{figure}

A \emph{pairing} $\tau$ on $[2n]$ is a fixed-point-free involution on $[2n]$. Given an electrical network $\Gamma$, there is a natural way to associate a pairing $\tau(\Gamma)$ \cite{Lam2018}. One first constructs the \emph{medial graph} $G(\Gamma)$, which depends only on the underlying unweighted graph of $\Gamma$. Place the vertices $1,2,\dots 2n$ around the boundary of the disk so that vertex $v_i$ is between $2i-1$ and $2i$. Then, construct a vertex $w_e$ for each edge $e$. The vertices $w_e$ and $w_{e'}$ are adjacent if and only if $e$ and $e'$ are incident and share a face. Moreover, if $e$ is incident to some boundary vertex $v_i$, then $w_e$ is adjacent to both $2i-1$ and $2i$. Lastly, if $v_i$ is an isolated vertex, we connect $2i-1$ and $2i$ in the medial graph. Notice that each vertex $w_e$ is four-valent.

\begin{figure}[h]
\centering
\begin{tikzpicture}

\draw (-3,0) circle [radius=1.5cm];
\draw[thick] (-3.5,0) -- (-2.5,0);
\draw[thick] (-3.5,0) -- (-4.275,.775);
\draw[thick] (-3.5,0) -- (-4.275,-.775);
\draw[thick] (-2.5,0) -- (-1.725,.775);
\draw[thick] (-2.5,0) -- (-1.725,-.775);

\fill (-3.5,0) circle (2pt);
\fill (-2.5,0) circle (2pt);
\fill (-4.275,.775) circle (2pt);
\fill (-4.275,-.775) circle (2pt);
\fill (-1.725,.775) circle (2pt);
\fill (-1.725,-.775) circle (2pt);
\fill (3.5,0) circle (2pt);
\fill (2.5,0) circle (2pt);
\fill (4.275,.775) circle (2pt);
\fill (4.275,-.775) circle (2pt);
\fill (1.725,.775) circle (2pt);
\fill (1.725,-.775) circle (2pt);

\draw (-1.725,.775) node [xshift=.2cm, yshift=.2cm] {$v_1$};
\draw (-1.725,-.775) node [xshift=.2cm, yshift=-.2cm] {$v_2$};
\draw (-4.275,-.775) node [xshift=-.2cm, yshift=-.2cm] {$v_3$};
\draw (-4.275,.775) node [xshift=-.2cm, yshift=.2cm] {$v_4$};

\draw (4,1.1) node [above] {1};
\draw (4.45,.35) node [right] {2};
\draw (4.45,-.35) node [right] {3};
\draw (4,-1.1) node [below] {4};
\draw (2,-1.1) node [below] {5};
\draw (1.505,-.35) node [left] {6};
\draw (1.505,.35) node [left] {7};
\draw (2,1.1) node [above] {8};
\draw (4.275,.775) node [xshift=.2cm, yshift=.2cm] {$v_1$};
\draw (4.275,-.775) node [xshift=.2cm, yshift=-.2cm] {$v_2$};
\draw (1.725,-.775) node [xshift=-.2cm, yshift=-.2cm] {$v_3$};
\draw (1.725,.7775) node [xshift=-.2cm, yshift=.2cm] {$v_4$};

\draw (3,0) circle [radius=1.5cm];
\draw[thick] (3.5,0) -- (2.5,0);
\draw[thick] (3.5,0) -- (4.275,.775);
\draw[thick] (3.5,0) -- (4.275,-.775);
\draw[thick] (2.5,0) -- (1.725,.775);
\draw[thick] (2.5,0) -- (1.725,-.775);
\draw[thick, dotted] (3,0) -- (3.85,.35);
\draw[thick, dotted] (3,0) -- (3.85,-.35);
\draw[thick, dotted] (3.85,.35) -- (3.85,-.35);
\draw[thick, dotted] (3,0) -- (2.15,.35);
\draw[thick, dotted] (3,0) -- (2.15,-.35);
\draw[thick, dotted] (2.15,.35) -- (2.15,-.35);
\draw[thick, dotted] (3.85,.35) -- (4,1.1);
\draw[thick, dotted] (3.85,-.35) -- (4,-1.1);
\draw[thick, dotted] (2.15,.35) -- (2, 1.1);
\draw[thick, dotted] (2.15,-.35) -- (2,-1.1);
\draw[thick, dotted] (3.85,.35) -- (4.45,.35);
\draw[thick, dotted] (3.85,-.35) -- (4.45,-.35);
\draw[thick, dotted] (2.15,.35) -- (1.505,.35);
\draw[thick, dotted] (2.15,-.35) -- (1.505,-.35);

\end{tikzpicture}
\caption{An electrical network (edge weights omitted) and its medial graph. Its medial pairing is $(1,4),(2,6),(3,7),(5,8)$.}
\label{f:medialgraph}
\end{figure}

A \emph{strand} of $G(\Gamma)$ is a maximal sequence of edges that proceeds straight through each four-valent vertex. Strands either form a cycle in the interior of the disk or connect two (distinct) boundary vertices $i,j$; in the case of the latter, we refer to the strand as either ``strand $(i,j)$'' or simply ``strand $i$''. The medial graph of $\Gamma$ induces a \emph{medial pairing} $\tau(\Gamma)$ on $[2n]$ given by setting $\tau(i)=j$ for each strand $(i,j)$ between boundary vertices. This procedure naturally extends to cactus networks and the resulting pairing is invariant under electrical equivalence, even though the medial graph might change \cite{CIM, Lam2018}.

\begin{defn}[\bf Electroid Cell]
    The \emph{electroid cell} $E_\tau$ is the set of electrical equivalence classes of cactus networks $[\Gamma]\in E_n$  with the same medial pairing $\tau(\Gamma)=\tau$. 
\end{defn}

We say that the medial graph is \emph{critical} if no strand is an interior cycle, no strand is self-intersecting, and no two strands cross more than once. Let the \emph{crossing number} $c(\tau)$ denote the number of strand crossings in a critical medial graph with medial pairing $\tau$. A cactus network $\Gamma$ is \emph{critical} if its medial graph is critical, or equivalently, if it has $c(\tau(\Gamma))$ edges. Any cactus network is electrically equivalent to a critical cactus network \cite{Lam2018}.

\begin{defn}[\bf Poset of Pairings]\label{poset of pairings}
    The pairings on $[2n]$ form a natural poset graded by the crossing number $c(\tau)$ as follows. Any pair of crossing strands $(a,c)$ and $(b,d)$ with $a<b<c<d$ in cyclic order can be resolved in two ways. Namely, we can have $(a,b), (c,d)$ or $(a,d),(b,c)$. Let $\tau'<\tau$ if $\tau'$ can be obtained from $\tau$ by sequentially resolving crossings.
\end{defn}

Figure \ref{f:uncrossing} shows part of the poset for $n=3$.


\begin{thm} \cite{Lam2018} \label{ElectricalSummary}
The space of electrical equivalence classes of cactus networks is the disjoint union $E_n=\bigsqcup E_\tau$ of electroid cells indexed by pairings on $[2n]$. The analytic closure of any electroid cell is the disjoint union $\overline{E_\tau}=\bigsqcup_{\tau'\leq \tau} E_{\tau'}$. 

Each electroid cell is an open ball of dimension $c(\tau)$. There exists a critical cactus network $\Gamma$ for any pairing $\tau$. Letting the resistances vary on a fixed critical cactus network gives a homeomorphism $\RR_{>0}^{\text{Edges}(\Gamma)} \simeq E_{\tau(\Gamma)}$. 

\end{thm}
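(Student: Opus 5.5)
This theorem is cited from \cite{Lam2018}, and I would reconstruct the proof through Lam's framework in four steps.

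\textbf{Step 1: disjointness of $E_n=\bigsqcup E_\tau$.} The medial pairing is invariant under electrical equivalence, since the pairing is unchanged by the local moves generating equivalence \cite{CIM, Lam2018}. Hence $\tau$ is a well-defined function on $E_n$, and the cells $E_\tau$ partition $E_n$ by definition.

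\textbf{Step 2: the parametrization.} To show every pairing is realized by a critical cactus network, draw $n$-strand-style chords for $\tau$ in the disk, perturbed so that each pair of crossing chords meets exactly once. This gives a critical medial graph. Two-color its faces and take the resulting graph $\Gamma$, allowing boundary vertices to coincide (this is the cactus structure) when the strands force it. Then $\Gamma$ has exactly $c(\tau)$ edges.

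For the homeomorphism $\RR_{>0}^{\text{Edges}(\Gamma)}\simeq E_\tau$:
\begin{itemize}
\item \emph{Surjectivity.} Any network with pairing $\tau$ reduces, via series-parallel, $Y$-$\Delta$ and loop/leaf removal, to a critical one. Any two critical medial graphs with the same pairing are related by Yang--Baxter moves, which correspond to $Y$-$\Delta$ moves on $\Gamma$. These transfer conductances bijectively and positively, so we may always land on our fixed $\Gamma$.
\item \emph{Injectivity.} This is the recovery theorem of Curtis--Ingerman--Morrow and Colin de Verdi\`ere: for a critical network, conductances are recovered from the response matrix as rational functions of minors. The argument recovers boundary spikes and boundary edges inductively via circular-pair minors. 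For cactus networks, I would apply it blockwise on each disk of the cactus.
\end{itemize}
Continuity of the inverse follows from this rational formula. Hence $E_\tau$ is an open ball of dimension $c(\tau)$.

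\textbf{Step 3: closure, inclusion $\subseteq$.} Suppose a sequence in $E_\tau$ converges in $E_n$. Passing to a subsequence, each conductance converges in $[0,\infty]$. Conductance $0$ deletes an edge and $\infty$ contracts it; either operation resolves a crossing in the medial graph in one of the two ways of Definition~\ref{poset of pairings}. The limit is therefore a cactus network whose pairing is $\leq\tau$. The subtle point is that the limit in $E_n$ equals the response matrix of the degenerate network, which requires Lam's compactification topology (his embedding into a Grassmannian via the response matrix). I would use continuity of that embedding in the conductances, including at $\infty$.

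\textbf{Step 4: closure, inclusion $\supseteq$, and the main obstacle.} For the reverse inclusion, given $\tau'<\tau$ obtained by resolving a single crossing, choose a critical $\Gamma$ for $\tau$ in which that crossing corresponds to an edge $e$. Sending the conductance of $e$ to $0$ or $\infty$ realizes $E_{\tau'}$ in $\overline{E_\tau}$; iterating handles chains of resolutions.

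I expect this step to be the main obstacle. One must check that after resolving, the resulting network is still critical, or reduces to something with pairing $\tau'$ lying in the closure. One must also check that the crossing can be placed adjacent to an edge in some critical representative. I would handle this by using Yang--Baxter moves to bring the two strands' crossing into a convenient position before degenerating.
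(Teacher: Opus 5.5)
\textbf{Where the gap is.} The gap is in Step 3, which carries the real content of the closure statement. You place the difficulty in Step 4, but that is the easy direction.

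Degenerating conductances on a critical $\Gamma$ does smooth crossings of its medial graph. However, the resulting network is usually not critical, and the cell containing it is indexed by the pairing of a \emph{critical reduction}, not by the raw strand pairing of the smoothed medial graph. The raw pairing is not an electrical invariant: two parallel edges between $v_1$ and $v_2$ form a lens and have a non-crossing raw pairing, unlike a single edge. So the justification in your Step 1 is valid only among critical representatives.

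\textbf{A concrete failure.} Take $n=3$ and let $\Gamma$ be the triangle on $v_1,v_2,v_3$, so $\tau=(1,4),(2,5),(3,6)$. Sending the conductance $y$ of $v_2v_3$ to $\infty$ smooths one crossing and leaves a lens, with raw pairing $(1,2),(3,6),(4,5)$ and $c=0$. Yet dividing the grove coordinates by $y$ shows that the limit is the cactus network with $v_2=v_3$ joined to $v_1$ by one edge of conductance $x+z$. Here $x,z$ are the conductances of $v_1v_2$ and $v_1v_3$. This limit lies in the one-dimensional cell $E_{(1,3),(2,6),(4,5)}$, not in the zero-dimensional cell your argument predicts.

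When several edges degenerate, a later smoothing at the second vertex of such a lens can even raise the crossing number of the raw pairing. So the sequence of raw pairings is not a chain of resolutions in the sense of Definition~\ref{poset of pairings}. The inference ``therefore its pairing is $\le\tau$'' needs a genuine lemma about critical reductions of arbitrary smoothings, and you give none.

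\textbf{How the paper's route avoids this.} The paper attributes the theorem to Lam, via the embedding $i$, and along that route this step is immediate:
\begin{itemize}
\item By Theorem~\ref{cell decomp}, $i(E_\tau)=i(E_n)\cap(\Pio_{f_\tau})_{\ge0}$.
\item Hence $\overline{i(E_\tau)}\subseteq i(E_n)\cap\bigsqcup_{g\le f_\tau}(\Pio_g)_{\ge0}$.
\item Only electrical $g=f_{\tau'}$ contribute, and Theorem~\ref{matching to permutation} converts $f_{\tau'}\le f_\tau$ into $\tau'\le\tau$.
\end{itemize}

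\textbf{Continuity in Step 3.} The continuity you invoke on $[0,\infty]^{\text{Edges}(\Gamma)}$ is true, but it is itself the substance of the compactification and must be proved. The topology is that of grove coordinates in $\mathbb{P}^{NC_n-1}$; there is no $n\times n$ response matrix once vertices are shorted. The naive limit vanishes identically when an interior vertex is cut off from the boundary.

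So you must show three things: the projective limit exists, it is independent of how the conductances approach their limits, and it equals the grove vector of the degenerate network with boundary-free components discarded. An exchange argument on groves does this, since dominant groves contain a spanning forest of the $\infty$-edges and use as few $0$-edges as possible.

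\textbf{Step 4.} No Yang--Baxter moves are needed here, since every crossing of a critical medial graph already sits at an edge. What you do need is to restrict to covers $\tau'\lessdot\tau$.

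For a non-covering single resolution, such as $\tau\to(1,2),(3,6),(4,5)$ above, contracting the edge lands in a different cell. For a cover, the smoothed medial graph has $c(\tau)-1=c(\tau')$ crossings, hence is critical with pairing $\tau'$. Step 2 then shows its conductances sweep out all of $E_{\tau'}$, and chains of covers give the rest.
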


\begin{figure}
    \centering
    \begin{tikzpicture}
        \coordinate (A1) at (0,.2);
        \coordinate (B1) at (-3,-3);
        \coordinate (B2) at (-1,-3);
        \coordinate (B3) at (1,-3);
        \coordinate (B4) at (3,-3);
        \coordinate (C1) at (-3,-6);
        \coordinate (C2) at (-1,-6);
        \coordinate (C3) at (1,-6);
        \coordinate (C4) at (3,-6);

        \draw (A1) circle (.65);
        \draw (B1) circle (.65);
        \draw (B2) circle (.65);
        \draw (B3) circle (.65);
        \draw (B4) circle (.65);
        \draw (C1) circle (.65);
        \draw (C2) circle (.65);
        \draw (C3) circle (.65);
        \draw (C4) circle (.65);

        \draw (-.1,-.8) -- (-1, -2.2);
        \draw (.1,-.8) -- (1, -2.2);
        \draw (-.3,-.8) -- (-3, -2.2);
        \draw (.3,-.8) -- (3, -2.2);
        \draw (-3, -3.8) -- (-3, -5.2);
        \draw (-1, -3.8) -- (-1, -5.2);
        \draw (1, -3.8) -- (1, -5.2);
        \draw (3, -3.8) -- (3, -5.2);

        \draw (-2.9, -3.8) -- (-1.1, -5.2);
        \draw (-.9, -3.8) -- (0.9, -5.2);
        \draw (1.1, -3.8) -- (2.9, -5.2);
        \draw (2.9, -3.8) -- (-2.9, -5.2);

        \draw [shift={(0,.2)}, thick] (-.65,0) to[out=0,in=-180] (.65,0);
        \draw [shift={(0,.2)}, thick] (.325,.563) to[out=-120,in=120] (.325,-.563);
        \draw [shift={(0,.2)}, thick] (-.325,.563) to[out=-60,in=60] (-.325,-.563);

        \draw (A1) ++(-.325,.563) node [above,scale=.8] {1};
        \draw (A1) ++(.325,.563) node [above,scale=.8] {2};
        \draw (A1) ++(.65,0) node [right,scale=.8] {3};
        \draw (A1) ++(.325,-.563) node [below,scale=.8] {4};
        \draw (A1) ++(-.325,-.563) node [below,scale=.8] {5};
        \draw (A1) ++(-.65,0) node [left,scale=.8] {6};

        \draw [shift={(-3,-3)}, thick] (-.65,0) to[out=0,in=60] (-.325,-.563);
        \draw [shift={(-3,-3)}, thick] (.325,.563) to[out=-120,in=120] (.325,-.563);
        \draw [shift={(-3,-3)}, thick] (-.325,.563) to[out=-60,in=-180] (.65,0);

        \draw [shift={(-1,-3)}, thick] (.325,.563) to[out=-120,in=-180] (.65,0);
        \draw [shift={(-1,-3)}, thick] (-.65,0) to[out=0,in=120] (.325,-.563);
        \draw [shift={(-1,-3)}, thick] (-.325,.563) to[out=-60,in=60] (-.325,-.563);

        \draw [shift={(1,-3)}, thick] (-.65,0) to[out=0,in=-60] (-.325,.563);
        \draw [shift={(1,-3)}, thick] (.325,.563) to[out=-120,in=120] (.325,-.563);
        \draw [shift={(1,-3)}, thick] (.65,0) to[out=-180,in=60] (-.325,-.563);

        \draw [shift={(3,-3)}, thick] (-.65,0) to[out=0,in=-120] (.325,.563);
        \draw [shift={(3,-3)}, thick] (.65,0) to[out=-180,in=120] (.325,-.563);
        \draw [shift={(3,-3)}, thick] (-.325,.563) to[out=-60,in=60] (-.325,-.563);

        \draw [shift={(-3,-6)}, thick] (-.65,0) to[out=0,in=60] (-.325,-.563);
        \draw [shift={(-3,-6)}, thick] (.325,.563) to[out=-120,in=-60] (-.325,.563);
        \draw [shift={(-3,-6)}, thick] (.325,-.563) to[out=120,in=-180] (.65,0);

        \draw [shift={(-1,-6)}, thick] (.65,0) to[out=-180,in=-120] (.325,.563);
        \draw [shift={(-1,-6)}, thick] (-.65,0) to[out=0,in=60] (-.325,-.563);
        \draw [shift={(-1,-6)}, thick] (.325,-.563) to[out=120,in=-60] (-.325,.563);

        \draw [shift={(1,-6)}, thick] (.325,.563) to[out=-120,in=-180] (.65,0);
        \draw [shift={(1,-6)}, thick] (-.65,0) to[out=0,in=-60] (-.325,.563);
        \draw [shift={(1,-6)}, thick] (.325,-.563) to[out=120,in=60] (-.325,-.563);

        \draw [shift={(3,-6)}, thick] (-.65,0) to[out=0,in=-60] (-.325,.563);
        \draw [shift={(3,-6)}, thick] (.325,.563) to[out=-120,in=60] (-.325,-.563);
        \draw [shift={(3,-6)}, thick] (.65,0) to[out=-180,in=120] (.325,-.563);

    \end{tikzpicture}
    \caption{The lower order ideal of pairings on $[6]$ below $\tau=(1,5), (2,4), (3,6)$.}
    \label{f:uncrossing}
\end{figure}

\subsection{The Grassmannian and positroid varieties}
\label{s:emb into Gr}
Let $V$ be a vector space and let $0 \leq k \leq \dim V$. The Grassmannian $\Gr(k, V)$ is the space of $k$-dimensional subspaces of $V$. If our ground field $\FF$ is understood from context, we'll write $\Gr(k, m)$ as shorthand for $\Gr(k, \FF^m)$. We can represent a point of $\Gr(k, m)$ as the row span of a full rank $k \times m$ matrix. In this representation, the $\binom{m}{k}$ maximal minors of this matrix form the homogeneous Pl\"ucker coordinates on $\Gr(k,m)$. 

The totally nonnegative points of a projective variety $X$ are the locus of its real points where all Pl\"ucker coordinates are nonnegative, denoted by $X_{\geq0}$. In particular, when $X$ is a subvariety of $\Gr(k,m)$, we treat $X$ as a projective variety via the Pl\"ucker embedding.

\begin{defn}[\bf Grassmann necklace]
A \emph{$(k,m)$-Grassmann necklace} is an $m$-tuple $\mathcal{I} = (I_1, I_2, \dots, I_m)$ of subsets $I_a\in\binom{[m]}{k}$ such that 
\begin{enumerate}
    \item $I_{a+1} = I_a$ if $a\not\in I_a$.
    \item $I_{a+1} = I_a\setminus\{a\}\cup \{a'\}$ if $a\in I_a$.
\end{enumerate}    
\end{defn}
For example, $\mathcal{I}=(124,234,134,124)$ is a $(3,4)$-Grassmann necklace. Let $\leq_a$ be the cyclically shifted order on $[m]$ given by $a < a+1 < \dots < a-1$ taken modulo $m$. Then, $\leq_a$ extends naturally to $k$-subsets of $[m]$ using the lexicographic order with respect to $\leq_a$. Let $S$ denote the $k$-subsets of $[m]$ which are less than $I_a$ with respect to $\leq_a$ for some $a$. We define the \emph{open positroid variety} $\Pio_\mathcal{I}$ by
$$\mathring{\Pi}_{\mathcal{I}} := \{X\in \Gr(k,m) \mid \Delta_I(X)\not=0 \phantom{a}\forall I\in\mathcal{I} \phantom{a}\text{and}\phantom{a} \Delta_I(X)=0 \phantom{a} \forall I \in S\},$$ and the \emph{closed positroid variety} $\Pi_\cI$ {by $$\Pi_{\mathcal{I}} := \{X\in \Gr(k,m) \mid  \Delta_I(X)=0 \phantom{a} \forall I \in S\}.$$}
\begin{thm}\cite{KLS}
The Grassmannian $\Gr(k,m)$ is stratified by the open positroid varieties. Furthermore, we can recover the positroid cells by taking the totally nonnegative points of the corresponding open positroid variety. The closed positroid variety is the Zariski closure of the positroid cell.
\end{thm}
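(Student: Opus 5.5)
This statement is the Knutson--Lam--Speyer theorem quoted from \cite{KLS}, so the plan is to retrace their argument. It has three parts: the open positroid varieties partition $\Gr(k,m)$ and form a stratification; their totally nonnegative parts are Postnikov's positroid cells; and $\Pi_\cI$ is the Zariski closure of the positroid cell.

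\emph{Step 1: partition.} Every $X\in\Gr(k,m)$ determines a Grassmann necklace. For each $a$, let $I_a(X)$ be the $\leq_a$-lexicographically minimal nonvanishing Pl\"ucker coordinate; this is the same as the set of pivot columns of a row-echelon form of $X$ computed in the cyclic order starting at $a$. Passing from $a$ to $a+1$ changes this set exactly as the necklace axioms require, so $\cI(X)$ is a Grassmann necklace. By construction $X\in\Pio_{\cI(X)}$, and $X$ lies in no other open positroid variety. So the $\Pio_\cI$ partition $\Gr(k,m)$.

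\emph{Step 2: stratification and irreducibility.} Let $\pi:\mathrm{Fl}(m)\to\Gr(k,m)$ be the projection from the flag variety. I will show that $\Pio_\cI$ is the image under $\pi$ of an open Richardson variety $\mathring{R}_{u,w}$, where $u$ is $k$-Grassmannian and $u\le w$ is the bounded affine permutation attached to $\cI$. Because open Richardson varieties are irreducible, $\Pio_\cI$ is irreducible. The map is proper, and $\overline{\mathring R_{u,w}}=\bigsqcup \mathring R_{u',w'}$ over the Bruhat interval conditions. Hence $\Pi_\cI$ is the union of the $\Pio_{\cI'}$ for necklaces $\cI'\le\cI$ in the induced order, which gives the stratification property.

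The main obstacle is proving $\pi(\mathring R_{u,w})=\Pio_\cI$ exactly, together with the closure statement: that the vanishing conditions defining $\Pi_\cI$ cut out precisely the image of the closed Richardson variety. I would establish this by writing $\Pi_\cI$ as an intersection of cyclically rotated Schubert varieties. Each condition ``$\Delta_I=0$ for $I<_a I_a$'' is exactly a Schubert condition relative to the flag rotated to start at $a$. Then I would compare this intersection with the projected Richardson variety using rank conditions on cyclic intervals of columns.

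\emph{Step 3: totally nonnegative part and closure.} Intersecting with $\Gr(k,m)_{\ge0}$, Postnikov's cells are defined by the same necklace data, namely minimal nonvanishing Pl\"ucker coordinates, so $(\Pio_\cI)_{\ge0}$ is the positroid cell $\Pi^{>0}_\cI$. That cell is a real ball of dimension $\dim\Pio_\cI=\ell(w)-\ell(u)$; Postnikov's boundary measurement parametrization by a reduced plabic graph gives it explicitly. Since the cell is a Zariski-dense semialgebraic subset of a real form of the irreducible variety $\Pio_\cI$, its Zariski closure is the irreducible closed set of that dimension, namely $\overline{\Pio_\cI}=\Pi_\cI$.
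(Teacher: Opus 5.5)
The paper gives no proof of this statement; it quotes it from \cite{KLS}. Your outline follows the Knutson--Lam--Speyer route: projected open Richardson varieties, cyclically rotated Schubert varieties, and Postnikov's cells with a dimension count. Step 1 is fine, and so is the density argument in Step 3. However, two specific claims in Step 2 are false as stated, and the second shows that the statement itself needs a different definition of $\Pi_\cI$.

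\emph{The Richardson normalization is on the wrong element.} With $u$ Grassmannian and $u\le w$ in Bruhat order, $\pi(\mathring R_{u,w})$ need not be a single open positroid variety. Take $k=1$, $m=3$. Then $\mathring R_{e,w_0}$ consists of the flags transverse to both coordinate flags, and $\pi(\mathring R_{e,w_0})=\{x_1x_3\neq 0\}\subset\mathbb{P}^2$. This is the union of the two open positroid varieties $\{x_1x_2x_3\neq0\}$ and $\{x_2=0,\ x_1x_3\neq0\}$. Enumerating the pairs shows that the second one is $\pi(\mathring R_{u,w})$ for no $u\le w$ with $u$ Grassmannian. The condition has to be put on the top element: $w$ should be a minimal-length coset representative modulo $S_k\times S_{m-k}$, and then $\pi$ maps $\mathring R_{u,w}$ isomorphically onto an open positroid variety. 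The boundary pieces $\mathring R_{u',w'}$ of $\overline{\mathring R_{u,w}}$ generally lose this property, as the example shows. So your ``hence $\Pi_\cI$ is a union of $\Pio_{\cI'}$'' needs an extra argument.

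\emph{The Schubert description needs the Gale order, not the lexicographic one.} The claim that ``$\Delta_I=0$ for $I<_a I_a$'' is a Schubert condition holds for the Gale (componentwise) order $\preceq_a$. It fails for the lexicographic order that the paper uses to define $S$ and that you use in Step 1. In $\Gr(2,4)$ with $I_1=\{1,4\}$, the lex-smaller sets are $\{1,2\},\{1,3\}$. The locus $\{\Delta_{12}=\Delta_{13}=0\}$ is the Schubert variety $\{\Delta_{12}=\Delta_{13}=\Delta_{23}=0\}$ together with an extra component $\{v_1=0\}$.

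Intersecting over all $a$ does not repair this. In $\Gr(3,5)$, take the necklace $\cI=(\{1,2,5\},\{2,3,5\},\{3,4,5\},\{1,4,5\},\{1,2,5\})$ of the positroid in which $v_1,\dots,v_4$ span a plane. The lexicographic recipe gives $S=\{\{1,2,3\},\{1,2,4\},\{2,3,4\}\}$. Any point with $v_2=0$ and $v_1,v_3,v_4$ independent therefore lies in $\Pi_\cI$ as literally defined, yet has $\Delta_{134}\neq 0$. The positroid cell, and hence its Zariski closure, lies in $\{\Delta_{134}=0\}$.

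So the last sentence of the statement, and your comparison with $\pi(\overline{\mathring R_{u,w}})$, only work if $\Pi_\cI$ is defined using $\preceq_a$. Equivalently, impose $\Delta_I=0$ for all $I$ outside the positroid $\{I : I\succeq_a I_a \text{ for all } a\}$, which is the definition in \cite{KLS}. Your Step 1 is unaffected: the lex-minimal nonvanishing Pl\"ucker coordinate is also the Gale-minimal one, so $\Pio_\cI$ is the same under either convention.
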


Positroid varieties can equivalently be indexed by bounded affine permutations. 

\begin{defn}[\bf Bounded affine permutation]
A \emph{bounded affine permutation of type-$(k,m)$} is a bijection $f:\mathbb{Z}\to\mathbb{Z}$ satisfying:
\begin{enumerate}
    \item $i\leq f(i)\leq i+m$
    \item $f(i+m)= f(i)+m$ for all $i\in \mathbb{Z}$
    \item $\sum_{i=1}^m(f(i)-i)=km$
\end{enumerate}
Let $B(k,m)$ denote the set of bounded affine permutations of type-$(k,m)$. 
\end{defn}

Given $X\in \Gr(k,m)$, define $f_X$ by 
$$f_X(i):=\min\{j\geq i\mid v_i\in\text{span}\{v_{i+1},v_{i+2}, \dots, v_j\}\}$$ 
where $v_i$ are the columns of a matrix representative of $X$, and the columns are extended periodically by $v_{i+m}:=v_i$. The map $f_X$ is guaranteed to be a bounded affine permutation of type-$(k,m)$ \cite{KLS}. One can equivalently define open positroid varieties by
$$\mathring{\Pi_f} :=\{X\in \Gr(k,m) \mid f_X = f\}.$$ 

Given $f\in B(k,m)$, the corresponding $(k,m)$-Grassmann necklace $\mathcal{I}(f)=(I_1, \dots, I_m)$ with $\Pio_f = \Pio_{\mathcal{I}(f)}$ is given by $$I_i = \{f(j) \mid j<i\text{ and } f(j)\geq i\} \text{ $\mod m$}.$$
Recall that the notion of stratification induces a poset structure on strata given by closure containment. This induces a poset structure on positroids, or equivalently, Grassmann necklaces and bounded affine permutations. We refer the reader to \cite{P} for details.

\subsection{Embedding $E_n$ into the Grassmannian}

In \cite{Lam2018}, Lam proves Theorem~\ref{ElectricalSummary} by embedding $E_n$ into the totally nonnegative Grassmannian $$i: E_n \hookrightarrow \Gr(n-1, {2n})_{\geq0}$$ as a linear slice. We describe this embedding in this subsection and the next. We first embed $E_n$ in some projective space.

Given an cactus network $\Gamma$, a \emph{grove} $F$ on $\Gamma$ is a spanning subforest such that each connected component contains at least one boundary vertex. The components naturally induce a non-crossing partition $\sigma(F)$ whose parts correspond to boundary vertices which lie in the same component. Note that if boundary vertices $i$ and $j$ are shorted in a cactus network, then $i,j$ will always be in the same block of $\sigma(F)$. Let $NC_n$ denote the set of all non-crossing partitions on $n$. The \emph{grove coordinates} $(L_\sigma(\Gamma))_{\sigma\in NC_n}$ are given by $$L_\sigma(\Gamma):= \sum\limits_{F: \sigma(F)=\sigma} \wt(F)$$ where the sum is over all groves of $\Gamma$ with boundary partition $\sigma$ and $\wt(F)$ is given by the product of the edge weights used in $F$. Moreover, the grove coordinates provide an embedding of $E_n$ into $\mathbb{P}^{NC_n-1}$ via $\Gamma\mapsto (L_\sigma(\Gamma))_{\sigma\in NC_n}$ \cite{Lam2018}.

We now describe a linear transformation from $\mathbb{P}^{NC_n-1}$ indexed by grove coordinates to $\mathbb{P}^{\binom{[2n]}{n-1}-1}$ indexed by Pl\"ucker coordinates of $\Gr(n-1,2n)$ \cite{Lam2018}. Place the numbers 1 to $2n$ around the boundary of a disk and identify boundary vertex $i$ with the number $2i-1$. A non-crossing partition $\sigma$ on the boundary vertices induces dual non-crossing partitions $\overline{\sigma}$ on $\{1,3,5, \dots, 2n-1\}$ and $\tilde{\sigma}$ on $\{2,4,6, \dots, 2n\}$, where $2i-1\in \overline{\sigma}_k$ if and only if $i\in\sigma_k$.

Given $\sigma\in NC_n$ and $I\in\binom{[2n]}{n-1}$, we say that $\sigma$ and $I$ are \emph{concordant}, denoted $\sigma \parallel I$, if each block of $\tilde{\sigma}$ and each block of $\overline{\sigma}$ has exactly one element not in $I$.

\begin{figure}[h]
\centering
\begin{tikzpicture}[scale=.4]
\coordinate (P) at (-8,0);
\coordinate (A) at (-8,4);
\coordinate (B) at (-5.17,2.83);
\coordinate (C) at (-4,0);
\coordinate (D) at (-5.17, -2.83);
\coordinate (E) at (-8, -4);
\coordinate (F) at (-10.83, -2.83);
\coordinate (G) at (-12,0);
\coordinate (H) at (-10.83, 2.83);
\fill (A) circle (5pt) node[above, yshift =.1cm, scale=.8]{1};
\fill (B) circle (5pt) node[above, xshift =.2cm, scale=.8]{2};
\fill (C) circle (5pt) node[right, xshift =.1cm, scale=.8]{3};
\fill (D) circle (5pt) node[right, xshift = .1cm, yshift = -.1cm, scale=.8]{4};
\fill (E) circle (5pt) node[below, yshift =-.1cm, scale=.8]{5};
\fill (F) circle (5pt) node[left, xshift = -.1cm, yshift =-.1cm, scale=.8]{6};
\fill (G) circle (5pt) node[left, xshift =-.1cm, scale=.8]{7};
\fill (H) circle (5pt) node[above, xshift = -.2cm, scale=.8]{8};
\draw (P) circle [radius=4cm] node[below, yshift=-2.4cm, xshift=-.2cm, scale=.8]{$I = \{3,6,8\}$};
\draw[thick] (A) -- (C);
\draw[thick, dotted] (D) -- (F);
\draw[thick, dotted] (H) -- (F);
\draw[thick, dotted] (D) -- (H);
\draw[thick] (C) circle [radius=.3cm];
\draw[thick] (F) circle [radius=.3cm];
\draw[thick] (H) circle [radius=.3cm];

\coordinate (P1) at (8,0);
\coordinate (A1) at (8,4);
\coordinate (B1) at (10.83,2.83);
\coordinate (C1) at (12,0);
\coordinate (D1) at (10.83, -2.83);
\coordinate (E1) at (8, -4);
\coordinate (F1) at (5.17, -2.83);
\coordinate (G1) at (4,0);
\coordinate (H1) at (5.17, 2.83);
\fill (A1) circle (5pt) node[above, yshift =.1cm, scale=.8]{1};
\fill (B1) circle (5pt) node[above, xshift =.2cm, scale=.8]{2};
\fill (C1) circle (5pt) node[right, xshift =.1cm, scale=.8]{3};
\fill (D1) circle (5pt) node[right, xshift = .1cm, yshift =-.1cm, scale=.8]{4};
\fill (E1) circle (5pt) node[below, yshift =-.1cm, scale=.8]{5};
\fill (F1) circle (5pt) node[left, xshift = -.1cm, yshift =-.1cm, scale=.8]{6};
\fill (G1) circle (5pt) node[left, xshift =-.1cm, scale=.8]{7};
\fill (H1) circle (5pt) node[above, xshift = -.2cm, scale=.8]{8};
\draw (P1) circle [radius=4cm] node[below, yshift=-2.4cm, xshift=-.2cm]{$I' = \{2,3,8\}$};;
\draw[thick] (A1) -- (C1);
\draw[thick, dotted] (D1) -- (F1);
\draw[thick, dotted] (H1) -- (F1);
\draw[thick, dotted] (D1) -- (H1);
\draw[thick] (C1) circle [radius=.3cm];
\draw[thick] (B1) circle [radius=.3cm];
\draw[thick] (H1) circle [radius=.3cm];

\end{tikzpicture} 
\caption{The non-crossing partition $\sigma=12|3|4$ induces the dual non-crossing partitions $\ov{\sigma}=13|5|7$ and $\tilde{\sigma}=2|468$. The subset $I=\{3,6,8\}$ is concordant with $\sigma$ but $I'=\{2,3,8\}$ is not.}
\label{f:concordant}
\end{figure}

Define a linear map $\mathbb{P}^{NC_n-1}\rightarrow \mathbb{P}^{\binom{[2n]}{n-1}-1}$ using  a matrix $H=(a_{I\sigma})$ with columns indexed by non-crossing partitions on $[n]$ and rows indexed by $(n-1)$-subsets of $[2n]$ as follows. 
$$a_{I\sigma}=
\begin{cases}
    1,\qquad \sigma \parallel I\\
    0,\qquad else
\end{cases}$$

\begin{thm}\cite{Lam2018} \label{embedding electroid cell in Grassmannian}
The map $E_n\hookrightarrow \mathbb{P}^{NC_n-1}\rightarrow \mathbb{P}^{\binom{[2n]}{n-1}-1}$ given by composing the grove coordinate map with the linear map $H$ gives an embedding 
$$i: E_n\hookrightarrow \Gr(n-1,2n)_{\geq0}.$$
\label{t:En emb}
\end{thm}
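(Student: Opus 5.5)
The plan has three parts. First, show that for every cactus network $\Gamma$ the point $H\cdot(L_\sigma(\Gamma))_\sigma$ lies in $\Gr(n-1,2n)$, i.e.\ satisfies the Pl\"ucker relations. Second, show that all of its coordinates are nonnegative. Third, show that the composite is injective.

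Nonnegativity is immediate: the grove coordinates are sums of products of positive edge weights, and $H$ has entries in $\{0,1\}$. For injectivity, it suffices to show that $H$ is injective on the image of the grove map, since the grove map is already an embedding into $\mathbb{P}^{NC_n-1}$. I would prove the stronger statement that $H$ has full column rank. To do this, exhibit for each $\sigma\in NC_n$ a subset $I_\sigma$ concordant with $\sigma$. Then order $NC_n$ (for instance by refinement, refined by a linear extension) so that the submatrix $(a_{I_\tau\sigma})$ is unitriangular. A natural choice is to delete from each block of $\overline{\sigma}$ and each block of $\tilde{\sigma}$ its cyclically first element, and take $I_\sigma$ to be the complement. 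Counting gives $|\overline{\sigma}|+|\tilde{\sigma}|=(\#\text{blocks of }\sigma)+(n+1-\#\text{blocks of }\sigma)=n+1$, so $|I_\sigma|=2n-(n+1)=n-1$ as required. One then checks that $\sigma\parallel I_\tau$ together with $\sigma\neq\tau$ forces $\sigma$ to be strictly earlier than $\tau$ in the chosen order.

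For the Pl\"ucker relations I would avoid verifying quadratic identities among grove polynomials directly. Instead I would realize the point as a Postnikov boundary measurement, following Lam's approach. From $\Gamma$, build a planar bipartite (plabic) graph $N(\Gamma)$ on the $2n$ boundary points:
\begin{itemize}
\item a black vertex for each interior vertex of $\Gamma$ and each boundary vertex $v_i$;
\item a white vertex for each face of $\Gamma$, together with the boundary points $2i$;
\item edges of $N(\Gamma)$ coming from the edges of $\Gamma$ and their duals, weighted by the conductance on primal edges and by $1$ on dual edges.
\end{itemize}
Almost perfect matchings of $N(\Gamma)$ with boundary set $I$ should correspond bijectively to pairs consisting of a grove $F$ with $\sigma(F)\parallel I$ and the complementary dual forest. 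The weight of such a matching is $\wt(F)$. The number of boundary points not in $I$ is $n+1$, which matches one omitted element per block of $\overline{\sigma}$ and of $\tilde{\sigma}$.

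Granting this correspondence, the dimer partition functions $\sum_{\text{matchings with boundary }I}\wt$ equal $\sum_\sigma a_{I\sigma}L_\sigma$. Postnikov's theorem (equivalently Talaska's) then says these are the Pl\"ucker coordinates of a point of $\Gr(n-1,2n)_{\geq0}$. Shorted vertices in a cactus network can be handled by identifying the corresponding black vertices, or by taking limits as conductances go to infinity and using that $\Gr(n-1,2n)_{\geq0}$ is closed.

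The main obstacle is the matching–grove bijection. In particular one must check that concordance is exactly the condition for the dual forest to complete a grove into a perfect matching, with one free boundary element per dual block. I would first verify it for $n=2$ and $n=3$, then prove it in general by induction on edges using deletion and contraction.
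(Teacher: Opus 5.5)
Your route is essentially the one in Lam's paper, which this paper cites without reproving: identify $\sum_{\sigma\parallel I}L_\sigma$ with dimer partition functions of a Temperley-type plabic graph, apply Postnikov--Talaska positivity, and show $H$ is injective. Two steps fail as you have written them.

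\textbf{The graph $N(\Gamma)$.} You color the vertices of $\Gamma$ black and the faces white. Then an edge of $\Gamma$ joins two black vertices and a dual edge joins two white ones, so $N(\Gamma)$ is not bipartite. No matching on vertices and faces alone can encode a forest together with its dual.

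The Temperley construction instead puts the vertices \emph{and} the faces of $\Gamma$ in one color class. It adds a vertex of the other color at the midpoint of each edge $e$ (a crossing of the medial graph $G(\Gamma)$), joined to both endpoints with weight $c_e$ and to both adjacent faces with weight $1$. The boundary point $2i-1$ hangs off $v_i$, and $2i$ hangs off the face containing the arc from $v_i$ to $v_{i+1}$. A face meeting several arcs gets several boundary points, exactly like an identified shorted vertex, so your first option for cactus networks works.

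With this graph, the bijection you call the main obstacle needs no induction. The complement of a grove $F$ in the dual graph is a dual grove with partition $\tilde{\sigma}$. If $\sigma(F)\parallel I$, each primal and each dual component has exactly one attached boundary point outside $I$. Match the corresponding vertex to that point, and match every other vertex to the midpoint of the first edge on its path to it. Each midpoint is then used exactly once, and the weight is $\wt(F)$. The usual Temperley cycle-counting argument (Kenyon--Propp--Wilson) shows that every almost perfect matching with unmatched set $I$ arises this way.

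\textbf{The triangular order.} If $\sigma\parallel I$, the odd elements of $[2n]\setminus I$ lie one per block of $\overline{\sigma}$. So all partitions concordant with a fixed $I$ have the same number of blocks, and distinct ones are incomparable under refinement. Refinement therefore orders nothing here; the choice of linear extension is the whole content, and it cannot be arbitrary. For $n=3$, $I_{12|3}=\{3,6\}$ is also concordant with $1|23$, so your claimed property forces $1|23$ to come before $12|3$.

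An order that works: let $M_\sigma=[2n]\setminus I_\sigma$ and order by $\sum M_\sigma$.
\begin{itemize}
\item If $\sigma\parallel I_\tau$, then $M_\tau$ meets each block $B$ of $\overline{\sigma}\cup\tilde{\sigma}$ in exactly one element $m_B\ge\min B$. Hence $\sum M_\tau\ge\sum M_\sigma$, with equality only if $M_\tau=M_\sigma$.
\item $M_\sigma$ determines $\sigma$. Its odd part lists the block minima of $\sigma$.
\item For $j\ge 2$, $2j\notin M_\sigma$ exactly when $j$ is the largest element of a block of $\sigma$ not containing $1$. This holds because $2i$ and $2j$ with $i<j$ share a block of $\tilde{\sigma}$ if and only if $\{i+1,\dots,j\}$ is a union of blocks of $\sigma$.
\item The minima together with these maxima reconstruct a noncrossing partition by a left-to-right stack scan, in which the block of $1$ is never closed.
\end{itemize}

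With these two repairs your argument is complete.
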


In particular, for any cactus network $\Gamma$, the Pl\"ucker coordinates of the point in $\Gr(n-1,2n)_{\geq0}$ represented by the network can be written as a sum of grove coordinates
$$\Delta_I(i(\Gamma)) = \sum_{\sigma \parallel I} L_\sigma(\Gamma).$$
\begin{defn}\cite{Lam2018}
The \emph{electroid space} $\chi_n$ is the scheme theoretic intersection between the linear subspace $\mathbb{P}^{NC_n-1}\hookrightarrow \mathbb{P}^{\binom{[2n]}{n-1}-1}$ and the Grassmannian $\Gr(n-1,2n)\hookrightarrow \mathbb{P}^{\binom{[2n]}{n-1}-1}$. 
\end{defn}

It is clear that $i(E_n)$ is contained in the real points of $\chi_n$. It can be seen from the proof of \cite[Theorem 1.5]{CGS2021} that the Zariski closure of $i(E_n)$ is set theoretically equal to $\chi_n$. It turns out that they are equal as a scheme. We attribute the following theorem to \cite{Gao_2024}, as it can be derived as an immediate consequence of results therein, which we establish below.

\begin{thm}\cite{Gao_2024}\label{two notions of electroid space}
   The electroid space $\chi_n$ equals the Zariski closure of $i(E_n)$ as a scheme. 
\end{thm}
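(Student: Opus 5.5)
We need to show that the scheme $\chi_n = \mathbb{P}^{NC_n-1}\cap \Gr(n-1,2n)$ is reduced and irreducible. Its underlying reduced scheme is already known to be $\overline{i(E_n)}$, and this closure is irreducible: it is the closure of the top cell $E_{\tau_{\max}} \cong \RR_{>0}^{c(\tau_{\max})}$, because the cell closures are unions of lower cells by Theorem~\ref{ElectricalSummary}. So the content is reducedness. I would get it from the results of \cite{Gao_2024}. As I understand that paper, it identifies the linear span of $\Gr(n-1,2n)$ intersected with $\mathbb{P}^{NC_n-1}$ with an isotropic Grassmannian. Concretely, $\chi_n$ is cut out inside $\Gr(n-1,2n)$ by linear equations in the Plücker coordinates that express isotropy of the subspace with respect to a degenerate symplectic-type form on $\FF^{2n}$. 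This matches the description of \cite{BGKT,CGS2021}, where the compactified space is abstractly isomorphic to $\LG(n-1,2n-2)$.

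The plan has three steps.
\begin{enumerate}
\item Fix the form $\omega$ on $\FF^{2n}$ (with one-dimensional or appropriate radical) used in \cite{CGS2021}. Using \cite{Gao_2024}, show that the ideal generated by the Plücker relations together with the linear forms vanishing on $H(\FF^{NC_n})$ equals the ideal of the isotropic Grassmannian $\IG_\omega(n-1,2n)$ in its Plücker embedding.
\item Identify $\IG_\omega(n-1,2n)$ with the Lagrangian Grassmannian, which is a smooth, hence reduced and irreducible, projective variety. Its dimension is $\binom{n}{2} = c(\tau_{\max})$, matching $\dim E_n$.
\item Conclude that $\chi_n$ is an integral scheme whose underlying set contains $i(E_n)$. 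Since $i(E_n)$ contains an open ball of dimension $\dim \chi_n$ in the real points, it is Zariski dense. Therefore the closure of $i(E_n)$ is all of $\chi_n$ as a scheme.
\end{enumerate}

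The main obstacle is the first step, which is a scheme-theoretic equality rather than a set-theoretic one. Isotropy is a set of quadratic conditions on the rows of a matrix representative, not obviously linear in Plücker coordinates. I would try two routes.

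\emph{First route.} The conditions $\omega(u_i,u_j)=0$ are equivalent to the vanishing of the contraction $\iota_\omega(u_1\wedge\cdots\wedge u_{n-1})\in\bigwedge^{n-3}$. This contraction is linear in the Plücker vector, so $\IG_\omega$ is the intersection of $\Gr$ with the linear subspace $\ker \iota_\omega$, scheme-theoretically. This holds because the Plücker-embedded isotropic Grassmannian is cut out by Plücker relations plus these linear equations, by standard representation theory: $\bigwedge^{n-1}$ decomposes and the fundamental representation of $Sp$ is the kernel of contraction, at least in good characteristic.

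\emph{Second route.} It then remains to check that $\ker\iota_\omega$ equals the image of $H$, the span of grove vectors. I would compare dimensions and prove containment. The dimension of $\ker\iota_\omega$ is $\binom{2n}{n-1}-\binom{2n}{n-3}$, adjusted for the radical, and this should equal the Catalan number $|NC_n|$. Containment of grove vectors in the kernel is checked on generic points of $E_n$ and extended by linearity. The radical of $\omega$ needs care here, and that is exactly what \cite{Gao_2024} handles, so I would cite it at this step.
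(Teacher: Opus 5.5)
\textbf{Assessment: a genuine gap, though your route can be repaired.}

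Your reduction to reducedness of $\chi_n$ is fine. Your underlying idea differs from the paper's: you identify the linear slice $\mathbb{P}(\operatorname{Im}H)$ with the kernel of contraction by a symplectic form, then use smoothness of an isotropic Grassmannian. The paper never mentions isotropy here. It transports $\mathcal{I}+\mathcal{J}$ through $k[\Delta_I]/\mathcal{I}\cong k[L_\sigma]$ and invokes \cite[Theorem 6.12]{Gao_2024}: the pulled-back Pl\"ucker ideal equals the ideal of polynomials vanishing on all cactus networks.

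The gap sits exactly where you locate the difficulty. Your Step 1 claims that the Pl\"ucker relations plus the linear forms generate the ideal of $\IG$. Given the known set-theoretic equality, this is just the theorem itself. You delegate it, and later ``the radical,'' to an unspecified result of \cite{Gao_2024} that you describe inaccurately. The concrete ingredients you do supply also fail as stated:
\begin{itemize}
\item No alternating form $\omega$ on $\FF^{2n}$ has $\IG_\omega(n-1,2n)\cong\LG(n-1,2n-2)$. A one-dimensional radical is impossible for an alternating form. With a two-dimensional radical, the isotropic $(n-1)$-planes form a family of dimension $\binom n2+2n-2$.
\item Your count $\binom{2n}{n-1}-\binom{2n}{n-3}$ equals $14\neq C_3=5$ when $n=3$.
\item Your appeal to the representation theory of $Sp$ concerns nondegenerate forms and linear spans. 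It says nothing about reducedness for a degenerate form. There the section $\Omega|_W$ of $\bigwedge^2S^\vee$ is not even regular: its zero locus has one more than the expected dimension.
\end{itemize}

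The route can be completed without \cite{Gao_2024}. Work in $\Gr(n+1,2n)$ via Section~\ref{Grassmannian Duality} and the twist $D$. There Theorem~\ref{IsotropicSummary} puts $i(E_n)$ inside $\IG^{\Omega}(n+1,2n)$, with $K=\ker\Omega$ two-dimensional.
\begin{enumerate}
\item Take a standard chart with rows $u_1,\dots,u_{n+1}$. The coordinate of $\iota_\Omega(u_1\wedge\cdots\wedge u_{n+1})$ indexed by the pivot set minus the $a$th and $b$th pivots is $\pm\Omega(u_a,u_b)$. So $\Gr\cap\mathbb{P}(\ker\iota_\Omega)$ is scheme-theoretically the zero scheme of $\Omega|_W$.
\item Every point $W$ of this scheme contains $K$. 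The tangent space at $W$ consists of the maps $\phi\colon W/K\to V/W$ with $\Omega(\phi(\cdot),\cdot)$ symmetric, which has dimension $\binom n2$. Hence the scheme is smooth.
\item Write $V=\FF^{2n}=K\oplus U$. Contraction acts only on the $U$-factor and is injective on $\bigwedge^{n+1}U$ and on $K\otimes\bigwedge^nU$. So $\ker\iota_\Omega=\bigwedge^2K\otimes\ker\bigl(\bigwedge^{n-1}U\to\bigwedge^{n-3}U\bigr)$, which has dimension $\binom{2n-2}{n-1}-\binom{2n-2}{n-3}=C_n$ in characteristic $0$.
\item Edgeless cactus networks with shorting pattern $\sigma$ have grove vector $e_\sigma$. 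So the points $i(\Gamma)$ span $\operatorname{Im}H$, giving $\operatorname{Im}H\subseteq\ker\iota_\Omega$ after transport.
\item Injectivity of $H$ \cite{Lam2018} then forces equality. So $\chi_n$ is smooth, hence reduced, hence equal to $\overline{i(E_n)}$ by the set-theoretic statement.
\end{enumerate}

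The repaired argument gives smoothness of $\chi_n$ directly and avoids the grove-algebra theorem. The paper's citation gives something stronger ideal-theoretically: $\mathcal{I}+\mathcal{J}$ is already the full vanishing ideal of $i(E_n)$, not merely equal to it up to saturation.
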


\begin{proof}
    Fix coordinates $(\Delta_I:I\in \binom{[2n]}{n-1})$  on $\mathbb{P}^{\binom{[2n]}{n-1}-1}$ and coordinates $(L_\sigma:\sigma \in NC_n)$ on $\mathbb{P}^{NC_n-1}$. Let $\mathcal{J}$ be the Pl\"ucker ideal in $k[\Delta_I]$ and $\overline{\mathcal{J}}$ be the corresponding ideal in the quotient ring $k[\Delta_I]/\mathcal{I}$. Let $\mathcal{I}$ be the linear ideal that defines the linear subspace $\mathbb{P}^{NC_n-1}\hookrightarrow \mathbb{P}^{\binom{[2n]}{n-1}-1}$ embedded via the matrix $H=(a_{I\sigma})$. Thus, $\mathcal{I}$ is the defining ideal for $\chi_n$ as a subscheme of $\Gr(n-1,2n)$. 
    
    Consider the ring isomorphism
    $k[\Delta_I]/\mathcal{I}\simeq   k[L_\sigma]$ via $\Delta_I\mapsto \sum_{\sigma \parallel I} L_\sigma$.  Let $\mathcal{K}$ be the ideal in $k[L_\sigma]$ arising as the image of $\overline{\mathcal{J}}$ under the ring isomorphism. We thus have an isomorphism $k[\Delta_I]/(\mathcal{I}+\mathcal{J})\simeq k[L_\sigma]/\mathcal{K}$. In \cite{Gao_2024}, they define the grove algebra $G_n$ as the ring $k[L_\sigma]/\mathcal{K'}$, where $\mathcal{K}'$ is the ideal of all polynomials in $L_\sigma$ that vanishes on all cactus networks. By \cite[Theorem 6.12]{Gao_2024}, we have equality $\mathcal{K}=\mathcal{K}'$ 
    
    Observe that ${\mathcal{I}+\mathcal{J}}$, being the isomorphic image of $\mathcal{K}=\mathcal{K}'$ under the ring isomorphism $k[\Delta_I]/(\mathcal{I}+\mathcal{J})\simeq   k[L_\sigma]/\mathcal{K}$, is thus the ideal of all polynomials in $\Delta_I$ that vanishes on all cactus networks, where the evaluation of $\Delta_I$ on a cactus network is prescribed by the ring isomorphism, given explicitly by $\Delta_I(\Gamma)= \sum_{\sigma \parallel I} L_\sigma(\Gamma)$. Notice that this is the same map as the embedding $E_n\hookrightarrow \Gr(n-1,2n)_{\geq0}\hookrightarrow \mathbb{P}^{\binom{[2n]}{n-1}-1}$ of the space of cactus networks in Theorem \ref{embedding electroid cell in Grassmannian}. Thus, ${\mathcal{I}+\mathcal{J}}$ is the ideal of all polynomials in $\Delta_I$ that vanishes on the image of $E_n\hookrightarrow \Gr(n-1,2n)_{\geq0}\hookrightarrow \mathbb{P}^{\binom{[2n]}{n-1}-1}$. This embedding factors through the Grassmannian and, indeed, the ideal contains the Pl\"ucker ideal. Working inside the Grassmannian, $\mathcal{I}$ is the ideal of all polynomials in Pl\"ucker coordinates that vanishes on the image $i:E_n\hookrightarrow \Gr(n-1,2n)_{\geq0}$. In other words, $\mathcal{I}$ is the homogeneous ideal for the Zariski closure of $i(E_n)$ in $\Gr(n-1,2n)$. 
\end{proof}

\subsection{Electroid varieties}\label{subsection:Electroid varieties}
We define a specific subset of bounded affine permutations as follows.
\begin{defn}\cite{Lam2018}
An \emph{electrical affine permutation} is a type-$(n-1,2n)$ bounded affine permutation $f$ such that, defining $g(i):=f(i)+1$, we have $i+1 \leq g(i) \leq i +2n -1$ and $g(g(i))=i+2n$.
\end{defn}

Let $\El(n)$ denote the set of all electrical affine permutations. The poset structure of affine bounded permutations defines a poset structure on $\El(n)$. Recall from Definition \ref{poset of pairings} that pairings on $[2n]$ form a poset.

\begin{thm}\label{matching to permutation}\cite{Lam2018}
    There is a poset isomoprhism between $\El(n)$ and the poset of pairings on $[2n]$ via $\tau\mapsto f_{\tau}$, where $f_\tau(i) := \tau(i)-1$.
\end{thm}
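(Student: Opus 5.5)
The plan is to prove the two parts of Theorem~\ref{matching to permutation} in turn. First I check that $\tau\mapsto f_\tau$ is a bijection from pairings on $[2n]$ onto $\El(n)$. Then I check that it matches the covering relations of the two posets.

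\textbf{Bijection.} Extend a pairing $\tau$ to $\mathbb{Z}$ by setting $g_\tau(i)$ to be the unique integer with $i<g_\tau(i)<i+2n$ and $g_\tau(i)\equiv\tau(i)\pmod{2n}$. This is possible because $\tau$ has no fixed points. Then $g_\tau(g_\tau(i))\equiv i$, and $i<g_\tau(g_\tau(i))<i+4n$, which forces $g_\tau(g_\tau(i))=i+2n$.

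Put $f_\tau=g_\tau-1$. Then $i\le f_\tau(i)\le i+2n-2$ and $f_\tau(i+2n)=f_\tau(i)+2n$. To see that $f_\tau$ is a bijection, note that $g_\tau$ is one, with inverse $g_\tau^{-1}(j)=g_\tau(j)-2n$.

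For the sum, take a pair $\{a,b\}$ with $1\le a<b\le 2n$. It contributes $(b-a)+(a+2n-b)=2n$ to $\sum_{i=1}^{2n}(g_\tau(i)-i)$. So $\sum_{i=1}^{2n}(f_\tau(i)-i)=n\cdot 2n-2n=(n-1)2n$, and $f_\tau$ has type $(n-1,2n)$.

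Conversely, if $f\in\El(n)$, then $g=f+1$ reduced mod $2n$ is an involution, because $g(g(i))=i+2n$. It has no fixed points, because $i<g(i)<i+2n$. These two constructions are inverse to each other.

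\textbf{Order.} Both posets are graded: pairings by $c(\tau)$, and bounded affine permutations by length. So the main step is to show that uncrossing matches covering relations. In the bounded affine permutation poset (the order on $\tilde S_n$-type elements as in \cite{KLS}), a cover $f'\lessdot f$ is $f'=f\circ t$ for an affine transposition $t$ swapping $i<j$ with $f(i)<f(j)$, plus a length condition.

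Take crossing strands $(a,c),(b,d)$ with $a<b<c<d$ cyclically. Resolving them to $(a,b),(c,d)$ or to $(a,d),(b,c)$ changes $g$ on the four residues. I will write each resolution as the composite of two affine transpositions applied to $f_\tau$. One transposition swaps the values at $a$ and $b$ (respectively $a$ and $d$ in lifted coordinates). The other is its conjugate under the involution, which swaps the values at $\tau(a),\tau(b)$. A single transposition never preserves the electrical condition $g^2=\text{shift}$, which is why the transpositions come in conjugate pairs.

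I then need the length identity $\ell(f_\tau)=\ell(f_{\tau'})+2\,(c(\tau)-c(\tau'))$, or whatever the precise normalization is. I would derive it by expressing $\ell(f)$ as a count of alignments or crossings of the chords $i\to f(i)$ (Postnikov's formula). Each strand crossing in $\tau$ yields a fixed number of inversions of $f_\tau$. I expect this to give $\ell(f_\tau)=\dim\Gr(n-1,2n)-2c(\tau)$ up to a constant, which would identify the ranks in the two posets.

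This shows that uncrossing is order-reversing or order-preserving, with the direction matching the convention on $B(k,m)$. Hence the map is a poset homomorphism.

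\textbf{Main obstacle.} The converse is the step I expect to be hardest: if $f_{\tau'}<f_\tau$ in the bounded affine permutation order, then $\tau'$ is obtained from $\tau$ by uncrossings. The difficulty is that a chain in $B(n-1,2n)$ between two electrical permutations may pass through non-electrical ones.

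My plan for this is a geometric reading of the order. It is closure order on positroid varieties, and the electroid varieties are $\Pi_f\cap\LG$. Combined with Theorem~\ref{ElectricalSummary}, which says $\overline{E_\tau}=\bigsqcup_{\tau'\le\tau}E_{\tau'}$, and the fact that $E_\tau$ is the totally nonnegative part of the corresponding positroid cell, one gets the following. If $f_{\tau'}\le f_\tau$, then the positroid cell of $f_{\tau'}$ lies in the closure of that of $f_\tau$. I would then intersect with $i(E_n)$ and check that $E_{\tau'}$ meets $\overline{E_\tau}$, which forces $\tau'\le\tau$.

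The delicate point is that $i(E_n)\cap\overline{\Pi_{f_\tau,>0}}$ might be larger than $\overline{E_\tau}$. To rule this out I would use that $i$ is a linear slice and that closures of positroid cells are cut out by Plücker vanishing.
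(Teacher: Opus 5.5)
The paper does not prove this statement. It is quoted from \cite{Lam2018}, so your argument has to stand on its own. Your bijection part is correct and complete. The forward implication is also correct in substance, but the labels of the two resolutions are swapped. With $a<b<c<d$ in $[2n]$, swapping the values of $g_\tau$ at positions $a,b$ and then at $c,d$ gives $(a,d),(b,c)$. Swapping positions $d-2n,a$ and then $b,c$ gives $(a,b),(c,d)$. In each step the two swapped values are in increasing order, so affine length goes up. This gives $f_\tau<f_{\tau'}$ in affine Bruhat order, i.e.\ $\Pi_{f_{\tau'}}\subseteq\Pi_{f_\tau}$, directly; no length identity is needed for this direction.

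The genuine gap is the converse, and the route you propose is circular. By Theorem~\ref{cell decomp}, each point of $i(E_n)$ lies in a unique $E_{\tau''}\subseteq(\Pio_{f_{\tau''}})_{\ge0}$. The closure of a positroid cell is the union of the smaller positroid cells. Hence
\[
i(E_n)\cap\overline{(\Pio_{f_\tau})_{\ge 0}}=\bigsqcup_{\tau'':\,f_{\tau''}\le f_\tau}E_{\tau''},
\qquad
\overline{E_\tau}=\bigsqcup_{\tau''\le\tau}E_{\tau''}.
\]
Ruling out that the slice is larger than $\overline{E_\tau}$ is therefore word for word the implication $f_{\tau''}\le f_\tau\Rightarrow\tau''\le\tau$ that you set out to prove. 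Linearity of $H$ and the Pl\"ucker description of cell closures only give the automatic inclusion $\overline{E_\tau}\subseteq i(E_n)\cap\overline{(\Pio_{f_\tau})_{\ge0}}$. Taking closures does not commute with intersecting with a linear space, and nothing in your plan forces equality. You also cannot fall back on the geometry developed in this paper. It uses $\chi_\tau=\bigsqcup_{\tau''\le\tau}\Xo_{\tau''}$, which is Theorem~\ref{cell decomp} combined with the very statement at hand.

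Here is a way to isolate what is actually missing. Take $X\in\Pio_{f_\tau}$ with columns $v_i$, and a cyclic interval $[i,j]$ of length less than $2n$. The formula $\operatorname{rank}(v_i,\dots,v_j)=\#\{a\in[i,j]:f_\tau(a)>j\}$ works out to $\tfrac12\bigl(j-i+x_{[i,j+1]}(\tau)\bigr)$, where $x_I(\tau)$ is the number of strands of $\tau$ with exactly one endpoint in $I$. Positroid varieties are cut out by cyclic rank conditions, so $f_{\tau'}\le f_\tau$ if and only if $x_I(\tau')\le x_I(\tau)$ for every cyclic interval $I$. The forward direction then becomes a one-line check, since an uncrossing never increases any $x_I$. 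The converse becomes a purely combinatorial claim: if $\tau'\ne\tau$ and $x_I(\tau')\le x_I(\tau)$ for all $I$, then some single uncrossing $\tau_1\lessdot\tau$ still satisfies $x_I(\tau')\le x_I(\tau_1)$ for all $I$. That claim is the real core of the theorem, and your proposal supplies no argument for it.
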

 
Recall that the poset of bounded affine permutations is isomorphic to the poset of Grassmann necklaces. We will describe the explicit bijection from pairings to Grassmann necklaces in Lemma \ref{pairing to necklace}.

\begin{thm} \label{cell decomp} \cite{Lam2018}
The following are equivalent.
\begin{itemize}
    \item The intersection between $\Xn $ and the open positroid variety $ \Pio_f$ is nonempty.
    \item The intersection between $\Xn$ and the positroid cell $ (\Pio_{f})_{\geq0}$ is nonempty.
    \item $f\in \El(n)$.
\end{itemize}
For $f=f_\tau \in \El(n)$, the intersection $\chi_n\cap (\Pio_{f_{\tau}})_{\geq0}$ is precisely the image $i(E_\tau)$.
\end{thm}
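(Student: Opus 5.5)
The plan is to prove the three equivalences via the cycle (3)$\Rightarrow$(2)$\Rightarrow$(1)$\Rightarrow$(3), and to obtain the final identification $\chi_n\cap(\Pio_{f_\tau})_{\geq0}=i(E_\tau)$ along the way.

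\emph{The easy implications.} For (3)$\Rightarrow$(2), let $f=f_\tau\in\El(n)$. By Theorem~\ref{ElectricalSummary} there is a critical cactus network $\Gamma$ with $\tau(\Gamma)=\tau$, so $E_\tau\neq\emptyset$. By Theorem~\ref{embedding electroid cell in Grassmannian}, $i(\Gamma)$ is a totally nonnegative point, and it lies in $\chi_n$ because $\chi_n$ contains $i(E_n)$. It remains to show that $i(\Gamma)$ lies in $\Pio_{f_\tau}$, that is, $f_{i(\Gamma)}=f_\tau$. The standard route is Postnikov's boundary measurement picture. From the medial graph of $\Gamma$, build a plabic graph in Lam's way (each edge becomes a small square, with appropriately colored vertices) whose boundary measurements are, up to gauge, the Pl\"ucker coordinates $\sum_{\sigma\parallel I}L_\sigma(\Gamma)$. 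The trip permutation of this plabic graph is read off from the medial strands, and is $i\mapsto\tau(i)-1$. Postnikov's theorem then places the point in the positroid cell of that trip permutation, which gives $f_{i(\Gamma)}=f_\tau$. The implication (2)$\Rightarrow$(1) is trivial, since $(\Pio_f)_{\geq0}\subset\Pio_f$.

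\emph{The main implication (1)$\Rightarrow$(3).} Take any point $X\in\chi_n$, possibly complex, and show that $f_X$ satisfies $g(g(i))=i+2n$ together with the bounds. The approach is to use the Lagrangian description from \cite{CGS2021,BGKT}: $\chi_n$ is identified with the points $X\in\Gr(n-1,2n)$ that are isotropic for a particular degenerate skew-type form, meaning $X\subset X^{\perp}$ with $X^\perp$ of dimension $n+1$, where the columns of $X$ satisfy a linear relation $\sum_i(-1)^i v_i=0$-type. From this duality I will derive a relation between the columns of a representative and the columns of a representative of the dual. Concretely, $v_i\in\spann(v_{i+1},\dots,v_j)$ should be equivalent to a rank condition on the dual side read cyclically backward, and this should give $f(f(i)+1)+1=i+2n$.

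The inequalities follow directly. The bound $g(i)\geq i+1$ is automatic. The bound $g(i)\le i+2n-1$ means that no column is a loop (zero) and no column is a coloop; I expect to extract this from the linear constraints defining $\mathbb{P}^{NC_n-1}$, which force every Pl\"ucker coordinate pattern to be balanced between odd and even indices.

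\emph{Identifying the cell.} For the final statement, the points of $\chi_n\cap(\Pio_{f_\tau})_{\geq0}$ are totally nonnegative points of $\chi_n$. By Lam's result that $E_n\to\Gr(n-1,2n)_{\geq0}\cap\chi_n$ is a bijection onto the nonnegative part (a compactness and dimension argument using Theorem~\ref{ElectricalSummary}), each such point equals $i(\Gamma)$ for some $\Gamma$. By the computation in the first step, $f_{i(\Gamma)}=f_{\tau(\Gamma)}$, so $\tau(\Gamma)=\tau$ and the point lies in $i(E_\tau)$. The reverse inclusion was shown in the first step.

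\emph{Expected obstacle.} The main obstacle is the implication (1)$\Rightarrow$(3) over $\CC$. There the positivity arguments are unavailable, so the duality computation relating $f_X$ to $f_{X^\perp}$ must be done purely linear-algebraically and carefully. I would first try the case where $X$ is in general position, and then handle degenerate rank drops through the cyclic symmetry of $\chi_n$.
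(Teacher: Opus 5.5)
This theorem is not proved in the paper; it is quoted from \cite{Lam2018}. Your argument therefore has to stand on its own, and it has two genuine gaps.

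Your (3)$\Rightarrow$(2) and (2)$\Rightarrow$(1) are fine as citations of Lam's plabic graph and Postnikov's theorem. Note that Postnikov's theorem needs the plabic graph to be reduced, which is where criticality of $\Gamma$ enters.

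The first gap is (1)$\Rightarrow$(3), which is the real content and is only announced: you say a duality computation ``should give'' $f(f(i)+1)+1=i+2n$. The input you state is also misplaced. Isotropy of the $(n-1)$-plane $X$ for a skew form is not the right condition: for $n=2$ every line is isotropic, while $\chi_2$ is only the line $\{[a:b:a:b]\}$ in $\Gr(1,4)=\mathbb{P}^3$. The isotropy lives on the $(n+1)$-plane $Z$ of Theorem~\ref{IsotropicSummary}. Since $\dim Z=n+1$, it forces $\ker\Omega\subseteq Z$ and $Z^{\perp}=MZ$, where $(My)_k=y_{k+1}-y_{k-1}$; and $Z^{\perp}$ is $X$ up to signs on columns. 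Your fallback (general position, then cyclic symmetry for degenerate points) cannot work, because the claim is about every point and rotation merely permutes strata.

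The computation can in fact be done pointwise. For $j<i+2n$, $f_Y(i)\le j$ iff some $u\in Y^{\perp}$ has $u_i\neq 0$ and support in $[i,j]$. Suppose $y\in Z$ and $My$ is supported in $[i,j]$ with $j\le i+2n-1$. Then $y$ is constant on each parity class of $[j,i+2n]$. Subtracting an element of $\ker\Omega\subseteq Z$ gives $y'\in Z$ supported in $[i+1,j-1]$ with $y'_{i+1}=(My)_i$, and conversely. Hence $f_Z(i)=f_X(i+1)+1$; the case $f_Z(i)=i+2n$ is covered by the indicator of the parity class of $i+1$, which lies in $\ker\Omega$. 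Combined with the standard $f_{X^{\perp}}(j)=f_X^{-1}(j)+2n$, this reads $g(j)=g^{-1}(j)+2n$, i.e.\ $g(g(j))=j+2n$.

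You also misread the bounds. Loops are allowed: $f(i)=i$ means $\tau(i)=i+1$, e.g.\ the point $[0:1:0:1]$ for $n=2$. What is excluded is $f(i)\in\{i+2n-1,i+2n\}$, and that follows formally from $g(g(j))=j+2n$ together with $i\le f(i)\le i+2n$. No separate Pl\"ucker-balancing argument is needed.

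The second gap is in the final assertion. Your first step gives $i(E_\tau)\subseteq \Xn\cap(\Pio_{f_\tau})_{\geq 0}$. The reverse inclusion is precisely the statement that every totally nonnegative point of $\Xn$ comes from a cactus network. The paper deduces that surjectivity from this very theorem, so invoking it is circular. Compactness and dimension do not supply it either: knowing that $i(E_n)$ is compact and that $i(E_\tau)$ is a $c(\tau)$-ball does not rule out further totally nonnegative points of $\Xn\cap\Pio_{f_\tau}$, for instance another component.

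What you need is a positivity-preserving induction. Take a totally nonnegative $X$ in the cell. If strands $i,i+1$ cross in $\tau$, show that $a=\Delta_{I_{i+1}}(X)/\Delta_{I'_{i+1}}(X)>0$ and that $u_i(-a).X$ is a totally nonnegative point of the cell with one fewer crossing. This is a bridge-removal argument, the positive counterpart of Proposition~\ref{p:uncrossing adjacent}. If $\tau$ instead has a strand $(i,i+1)$, pass to $n-1$. One of the two cases always occurs, so induction writes $X=i(\Gamma)$ for a cactus network $\Gamma$.
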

In particular, the above Theorem shows that the only totally nonnegative points of the Zariski closure of $i(E_n)$ are given by $i(E_n)$ itself.
\begin{defn}\cite{Lam2018}
    The \emph{open} and \emph{closed electroid varieties} $\Xo_{\tau}$ and $\chi_\tau$ are defined by $$\Xo_\tau:=\Pio_{f_\tau} \cap \Xn,\qquad \chi_\tau:= \Pi_{f_\tau} \cap \Xn.$$  
    Recall that $E_\tau$ was called an electroid cell. We also call its isomorphic image $i(E_\tau)=(\Xo_\tau)_{\geq0}$
an electroid cell.\end{defn}

\begin{rem}
    Positroid cells got their name from combining ``positive" and ``matroid." In the stratification of the totally nonnegative Grassmannian $\Gr(k,m)_{\geq0}$ into positroid cells, two points in the totally nonnegative Grassmannian are in the same cell if and only if the same set of Pl\"ucker coordinates vanish, giving the matroid data. Analogously, two points in $(\chi_n)_{\geq0}=i(E_n)$ are in the same electroid cell if and only if after embedding to $\mathbb{P}^{NC_n-1}$, the same set of grove coordinates vanish. The above theorem can be interpreted as saying that, with respect to the linear map $H$, the data of the set of vanishing grove coordinates gives the same information as the data of the set of vanishing Pl\"ucker coordinates. The following theorem makes this correspondence explicit.
\end{rem}
\begin{thm}\cite[Prop 4.4 and Prop 4.10]{Lam2018}
Given a critical cactus network $\Gamma$, define the \emph{electroid} $\mathcal{E}(\Gamma) := \{\sigma \mid L_\sigma(\Gamma)\not=0\}$. The electroid is constant on each electroid cell $E_\tau$. That is, for any critical cactus network $\Gamma$ with $\tau(\Gamma)=\tau$, the electroid $\mathcal{E}(\Gamma)$ is the same. 
\label{vanishing groves}
\end{thm}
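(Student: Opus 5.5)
The plan is to derive this from the positroid-cell picture rather than from grove combinatorics directly. The key input is the linear relation
\[
\Delta_I(i(\Gamma)) = \sum_{\sigma\parallel I} L_\sigma(\Gamma), \qquad L_\sigma(\Gamma)\ge 0 .
\]
Since every grove weight is a product of positive edge weights, each $L_\sigma(\Gamma)$ is a nonnegative real number. So the sum is a sum of nonnegative terms, and $\Delta_I(i(\Gamma))\neq 0$ if and only if some $\sigma\in\mathcal{E}(\Gamma)$ satisfies $\sigma\parallel I$. We want to show that $\mathcal{E}(\Gamma)$ depends only on $\tau(\Gamma)$.

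The first step avoids positroids entirely. Fix a critical cactus network $\Gamma$ with $\tau(\Gamma)=\tau$. Then $L_\sigma(\Gamma)$ is a polynomial in the edge weights with nonnegative coefficients, and it is either identically zero or positive everywhere on $\RR_{>0}^{\text{Edges}(\Gamma)}$. Whether it is zero is a purely graph-theoretic fact: it depends only on whether $\Gamma$ has a grove with boundary partition $\sigma$. So $\mathcal{E}$ is constant as the weights vary on the fixed graph $\Gamma$. By Theorem~\ref{ElectricalSummary}, letting the weights vary on this one graph already sweeps out all of $E_\tau$. Moreover, any other critical network $\Gamma'$ with the same pairing is electrically equivalent to some weighting of $\Gamma$, since both parametrize $E_\tau$.

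It therefore remains to show that electrically equivalent critical networks have the same electroid. The cleanest route is to show that the grove coordinates $(L_\sigma)$ are an invariant of electrical equivalence, up to a global scalar. This follows from Theorem~\ref{t:En emb}. The map $\Gamma\mapsto (L_\sigma(\Gamma))\in\mathbb{P}^{NC_n-1}$ factors through $E_n$, because it is described as an embedding of the space $E_n$ of equivalence classes. Hence equivalent networks give proportional vectors $(L_\sigma)$, with a nonzero scalar, and therefore identical supports. This shows that $\mathcal{E}$ is constant on $E_\tau$.

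The main obstacle is justifying that $L_\sigma$ is well defined projectively on equivalence classes, rather than just on graphs. If I cannot cite this directly from \cite{Lam2018}, I would fall back on the local moves: any two critical networks with the same medial pairing are related by star–triangle (Y–$\Delta$) moves, as in \cite{CIM}. I would then check that under a Y–$\Delta$ move with the standard weight transformation, every grove sum $L_\sigma$ is multiplied by the same nonzero factor. This is the classical local computation: one sums over the restrictions of groves to the star or triangle and matches the connectivity patterns of the three outer vertices. That gives the projective invariance, and hence the support invariance, under each move, which completes the argument.
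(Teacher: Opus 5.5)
Your argument is correct. The paper gives no proof of its own here; it only cites Lam's Propositions 4.4 and 4.10. Your reduction uses only facts the paper itself records: the grove polynomials on a fixed critical graph are either identically zero or positive on $\RR_{>0}^{\text{Edges}(\Gamma)}$; any other critical $\Gamma'$ with the same $\tau$ is electrically equivalent to a reweighting of $\Gamma$ by Theorem~\ref{ElectricalSummary}; and equivalent networks have proportional, nonzero grove vectors because the grove map is defined on $E_n$ (Theorem~\ref{embedding electroid cell in Grassmannian}). Two small points: your announced positroid-cell plan is never actually used, and the Y--$\Delta$ fallback is unnecessary once you invoke that the grove map factors through $E_n$.
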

Since open positroid varieties form a partition of the Grassmannian, the above theorems show that open electroid varieties form a partition of $\Xn$ whose totally nonnegative points form a stratification for ${(\Xn)}_{\geq 0}$. However, up until now, little was known about their geometry. It is not even a priori clear that $\chi_\tau$ is the Zariski closure of $\Xo_\tau$.

\subsection{Cyclic Symmetry on Electroid Spaces}\label{cyclic symmetry}
Let the cyclic group $C_n$ acts on the Grassmannian $\Gr(k,n)$ via $(v_1,\cdots , v_n)\mapsto ((-1)^k v_{n}, v_1,\cdots, v_{n-1})$, where the $v_i$'s are the columns of a matrix representative of a point. It is clear that the totally nonnegative part $\Gr(k,n)_{\geq 0}$ is closed under this action (see, for example,~\cite[Remark 3.3]{P}). 

In fact, the electroid space is also closed under this action. To see this, it suffices to show it for $i(E_n)$, because $\chi_n$ is the Zariski closure of $i(E_n)$. Recall that $i(E_n)$ is all points represented by a cactus network. It is shown in Section 2.5 of \cite{CGS2021} that applying a cyclic shift to a point represented by a cactus network gives the point represented by the dual cactus network.

One can combinatorially show that the medial pairing given by the dual cactus network is a cyclic shift of the medial pairing of the original cactus network. Here, we provide a roundabout way. For an affine bounded permutation $f$, define the cyclically shifted affine bounded permutation $C(f)$ by $C(f)(i)=f(i-1)+1$. It is well-known, see in \cite{KLS} or, more explicitly, \cite[Lemma 3.12]{AGH2024}, that the cyclic action on $\Gr(k,n)$ restricts to an isomorphism between closed electroid varieties $\Pi_f\to \Pi_{C(f)}$. 

Now, from Theorem \ref{matching to permutation}, it is straightforward to see that $C(f_{\tau}) = f_{\tau'}$, where $\tau'$ is the cyclic shift of $\tau$ given by $\tau'(i+1)=\tau(i)+1$. Thus, the cyclic shift operator action on $\Gr(k,n)$ permutes the electroid varieties by cyclically rotating the corresponding medial pairing.

\subsection{Grassmannian Duality \label{Grassmannian Duality}}
It is well known that $\Gr(k,n)$ is isomorphic to $\Gr(n-k,n)$. In particular, a subvariety $\chi_n\hookrightarrow \Gr(n-1,2n)$ can naturally be embedded in $\Gr(n+1,2n)$ via the duality isomorphism.

We fix a specific such isomorphism $\Gr(n-1,2n)\to \Gr(n+1,2n)$ by $X\mapsto \tilde{X}$ where $\Delta_I(\tilde{X})=\Delta_{[2n]\setminus I}(X)$ for each $I\in \binom{[2n]}{n+1}$. Note that this isomorphism, compared to the more standard one given by taking orthogonal complement, is off by a sign change map which switches signs of even columns of a matrix representative, see \cite[Section 7]{Hochster75} and \cite[Theorem 2.2.8]{oxley06}. Our choice of the duality has the property that $\Gr(n-1,2n)_{\geq0}$ is mapped to $\Gr(n+1,2n)_{\geq0}$. Thus, we can {also} think of $\chi_n\subset \Gr(n+1,2n)$ with totally nonnegative points given by the image $i(E_n)$.

The convention of embedding $\chi_n$ in $\Gr(n-1,2n)$ is adopted in \cite{Lam2018}, whereas the convention of embedding in $\Gr(n+1,2n)$ is adopted in \cite{CGS2021}. We will use both conventions, since we need results from both the above two papers.

\subsection{The Lagrangian Grassmannian} \label{The Lagrangian Grassmannian}

In 2021, \cite{BGKT} and \cite{CGS2021} gave another reason to care about the electroid space, showing that it is abstractly isomorphic to the Lagrangian Grassmannian $\LG(n-1,2n-2)$. If $V$ is a vector space equipped with a skew-symmetric bilinear form $\Omega$, then we define a subspace $U$ of $V$ to be \emph{isotropic} if the restriction of $\Omega$ to $U$ is $0$. The space of isotropic $k$-planes in $V$ is denoted $\IG^{\Omega}(k, V)$. 
We define a skew-symmetric bilinear form on $\RR^{2n}$ by
\begin{equation*}
\Omega((x_1, \dots, x_{2n}), (y_1, \dots y_{2n})) = \sum_{i=1}^{2n} (x_i y_{i+1} - x_{i+1} y_i)
\label{OmegaForm}
\end{equation*} 
with indices modulo $2n$. The bilinear form has a $2$-dimensional kernel, and thus its isotropic Grassmannian $\IG^{\Omega}(n+1, \RR^{2n})$ is isomorphic to $\LG(n-1,2n-2)$. 

We define a sign flipped version of the totally nonnegative Grassmannian; $\Gr(n+1,2n)_{\geq 0}^D$ is the image of $\Gr(n+1,2n)_{\geq 0}$ under the diagonal matrix $$\text{diag}(1,1,-1,-1,1,1,-1,-1,\ldots,(-1)^n, (-1)^n).$$ We write $(\Pio_f)^{D}_{\geq 0}$ for the corresponding sign flipped version of an open positroid variety. Define the totally nonnegative part of $\IG^{\Omega}(n+1, \RR^{2n})$ by the intersection $\Gr(n+1, 2n)_{\geq 0}^D \cap \IG^{\Omega}(n+1, \RR^{2n})$. Note that this gives a different totally nonnegative part of $\IG^{\Omega}(n+1, \RR^{2n})\simeq \LG(n-1,2n)$ than those considered by Lusztig, Rietsch, and Karpman \cite{Lusztig_1994, Rietsch_1999, Karpman_2018}.  

\begin{thm} \label{IsotropicSummary}
The image $i(E_n)$ in $\Gr(n+1,2n)$, after right multiplication by $D$, is given by $\Gr(n+1, 2n)_{\geq 0}^D \cap \IG^{\Omega}(n+1, \RR^{2n})$. Thus, its Zariski closure, after right multiplication by $D$, is $\chi_n=\IG^{\Omega}(n+1, 2n)\simeq \LG(n-1,2n-2)$.
\end{thm}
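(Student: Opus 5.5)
The theorem splits into two assertions, and I will handle them in order. The first, the totally nonnegative statement, is essentially the main result of \cite{CGS2021} (and independently \cite{BGKT}). The second, the Zariski closure statement, follows from the first combined with Theorem~\ref{two notions of electroid space}. Throughout I work with the $\Gr(n+1,2n)$ convention of Section~\ref{Grassmannian Duality}. There the duality $\Delta_I(\tilde X)=\Delta_{[2n]\setminus I}(X)$ carries $\Gr(n-1,2n)_{\geq 0}$ onto $\Gr(n+1,2n)_{\geq 0}$, so $i(E_n)$ sits in $\Gr(n+1,2n)_{\geq0}$.

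\textbf{Step 1: cactus networks give isotropic subspaces.} I would show $i(E_n)D\subseteq \Gr(n+1,2n)^D_{\geq0}\cap \IG^{\Omega}(n+1,\RR^{2n})$. Nonnegativity after multiplying by $D$ holds by definition of the sign-flipped cell. For isotropy, I would use the explicit row span from \cite{CGS2021}. For an honest electrical network with response matrix $L$, the subspace is spanned by vectors interleaving the coordinates of $L$ with partial-sum (current) coordinates. Then $\Omega$ on two such vectors reduces to $u^TLv-v^TLu$ plus telescoping terms. This vanishes because $L$ is symmetric with zero row sums. Isotropy is a closed condition and $E_n$ is the closure of the networks without shorted vertices, so cactus networks follow by continuity.

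\textbf{Step 2: the reverse inclusion.} Take a point of $\Gr(n+1,2n)^D_{\geq0}\cap\IG^\Omega$. After undoing $D$ it lies in some positroid cell $(\Pio_f)_{\geq0}$. I would first show that isotropy forces $f$ to be electrical, i.e. $g=f+1$ is an involution modulo $2n$. The idea is that the isotropy relation pairs the dependencies among cyclically consecutive columns symmetrically. Then I would parametrize the cell $(\Pio_{f_\tau})_{\geq0}$ via a plabic graph obtained from a critical cactus network for $\tau$, with $c(\tau)$ free edge weights, as in Theorem~\ref{ElectricalSummary}. I would show that isotropy cuts the positroid cell, of dimension larger than $c(\tau)$, down exactly to the image of $E_\tau$, using Theorem~\ref{cell decomp} to identify $\chi_n\cap(\Pio_{f_\tau})_{\geq0}$ with $i(E_\tau)$.

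This step is the main obstacle. Isotropy is a linear-algebraic condition, whereas membership in $\chi_n$ is defined via grove coordinates. Bridging them requires either a dimension and connectedness argument cell by cell, or an inductive argument: remove a boundary edge or contract a strand to reduce $n$, and check that isotropic nonnegative points admit such a reduction. I would attempt the induction, using the cyclic symmetry of Section~\ref{cyclic symmetry} to assume the relevant strand is at position $1$.

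\textbf{Step 3: Zariski closures.} Both sets are now equal, so their Zariski closures agree. By Theorem~\ref{two notions of electroid space}, the closure of $i(E_n)$ is $\chi_n$. It remains to show that the closure of $\Gr(n+1,2n)^D_{\geq0}\cap\IG^\Omega$ is all of $\IG^\Omega(n+1,2n)$. This variety is irreducible, being isomorphic to $\LG(n-1,2n-2)$ since $\Omega$ has a $2$-dimensional kernel, and it has dimension $\binom{n}{2}$. The top cell $E_{\tau}$ for the pairing with maximal crossing number $c(\tau)=\binom n2$ is a real ball of that dimension consisting of smooth real points. Hence its Zariski closure is an irreducible closed subset of full dimension, and so it equals $\IG^\Omega$. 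This yields $\chi_n D=\IG^\Omega(n+1,2n)$, at least set-theoretically. Equality as schemes follows because $\chi_n$ is reduced by Theorem~\ref{two notions of electroid space} and $\IG^\Omega$ is smooth, hence reduced.
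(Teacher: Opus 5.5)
The paper does not prove this statement: it is quoted as background from \cite{CGS2021} (see also \cite{BGKT}), with the Zariski-closure part tied to Theorem~\ref{two notions of electroid space}. Your Steps 1 and 3 are sound, granting the row-span description of network points from \cite{CGS2021}. Step 2, however, is a genuine gap, because it is only a plan.

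\begin{itemize}
\item You give no argument that isotropy forces $f$ to be electrical.
\item Invoking Theorem~\ref{cell decomp} to identify the isotropic part of $(\Pio_{f_\tau})_{\geq 0}$ with $i(E_\tau)$ is circular. That theorem describes the nonnegative points of $\chi_n$. Whether an arbitrary isotropic nonnegative point lies in $\chi_n$ is exactly what Step 2 is supposed to establish.
\item The inductive strategy you sketch is never carried out.
\end{itemize}

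You do not actually need Step 2; the fix is to reverse the order of Steps 2 and 3. Your dimension argument in Step 3 uses only the inclusion $i(E_n)D\subseteq \IG^{\Omega}(n+1,2n)$ from Step 1, since the top cell sits inside $i(E_n)D$. So Step 3 already gives $\overline{i(E_n)}\,D=\IG^{\Omega}(n+1,2n)$. By Theorem~\ref{two notions of electroid space}, this means $\chi_n D=\IG^{\Omega}(n+1,2n)$ (reducedness handles the scheme structure, as you noted).

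Now take $Z\in\Gr(n+1,2n)^D_{\geq0}\cap\IG^{\Omega}(n+1,\RR^{2n})$. Then $ZD$ is a real, totally nonnegative point of $\chi_n$; the duality $\Gr(n-1,2n)\simeq\Gr(n+1,2n)$ preserves total nonnegativity. Theorem~\ref{cell decomp}, together with the remark following it, says the only such points are those of $i(E_n)$. This gives the reverse inclusion with no cell-by-cell analysis of isotropy.
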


\subsection{Main results}

\begin{thm}\label{Main1}
The Lagrangian Grassmannian $\LG(n-1, 2n-2)$ admits a well-behaved stratification by open and closed electroid varieties, extending the stratification of its totally nonnegative part by electroid cells. More precisely, we have the following. 
\begin{enumerate}
    \item Open electroid varieties are irreducible, smooth, and have expected dimension given by the number of crossings in the corresponding strand diagram.
    
    \item Totally nonnegative points of an open electroid variety are Zariski dense in the open electroid variety and are exactly the corresponding electroid cell.
    
    \item Closed electroid varieties are reduced, irreducible, regular in codimension one, and have expected dimension.
    
    \item There is a Frobenius splitting on $\LG(n-1, 2n-2)$ under which the closed electroid varieties are compatibly split.
    
    \item Any closed electroid variety $\chi_\tau$ is the Zariski closure of its open electroid variety $\Xo_{\tau}$ and is a disjoint union $\chi_\tau=\bigsqcup_{\tau'\leq\tau}\Xo_{\tau'}$ of open electroid varieties. Thus, they form a stratification.

    \item There exists an isomorphism between $\chi_{n_1}\times \chi_{n_2}$ and a certain closed electroid variety of $\chi_{n_1+n_2}$, respecting the stratification on both sides by electroid varieties. 

    \item Any critical cactus network embeds an open ball $\mathbb{R}_{> 0}^\ell$ into the totally nonnegative points of an open electroid variety. This can be extended algebraically to embed an algebraic torus $(\mathbb{C}^*)^\ell$ as an open dense subscheme of the open electroid variety.
    
\end{enumerate} 
\end{thm}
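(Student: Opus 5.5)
The plan is to transport the Knutson--Lam--Speyer machinery for positroid varieties to $\chi_n$, working in the model $\chi_n=\IG^{\Omega}(n+1,2n)$ of Theorem~\ref{IsotropicSummary}. The organizing tool will be the splicing isomorphism of item (6), which lets us induct on $n$. For a pairing $\tau$ containing a pair of adjacent boundary points $(i,i+1)$, or more generally for a $\tau$ that splits as a non-crossing union of two blocks, I would show that $\chi_\tau$ is a closed electroid variety of the form $\chi_{n_1}\times\chi_{n_2}$. To do this, I would realize a point of $\chi_\tau$ as a direct-sum-type construction of isotropic subspaces: gluing two cactus networks at a shorted boundary vertex. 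I would then verify, via Lemma~\ref{pairing to necklace} and the Grassmann-necklace description, that the positroid conditions defining $\Pi_{f_\tau}$ cut out exactly this product inside $\IG^{\Omega}$. This is a concrete linear-algebra check on Plücker vanishing, which I would do using the grove expansion $\Delta_I=\sum_{\sigma\parallel I}L_\sigma$.

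For items (1), (2) and (7), I would fix a critical cactus network $\Gamma$ for $\tau$ (which exists by Theorem~\ref{ElectricalSummary}) and view the grove-coordinate map $\RR_{>0}^{\ell}\to \Xo_\tau$, $\ell=c(\tau)$, as polynomial in the edge weights. I would then extend it to $(\CC^*)^\ell$ and show that it lands in $\Xo_\tau$. Rather than using the twist, I would prove injectivity and open-image by induction: remove a boundary edge or boundary spike of $\Gamma$, which corresponds to an elementary ``adding a bridge'' map $x\mapsto x\cdot u_i(t)$ on $\IG^{\Omega}$. This map should multiply $\Xo_{\tau'}\times\CC^*$ isomorphically onto an open subset of $\Xo_\tau$, in analogy with the bridge decomposition for positroids. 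The complement of that open subset should lie in the locus where a single Plücker coordinate from the necklace vanishes. By induction, $(\CC^*)^\ell$ is then an open dense subscheme of $\Xo_\tau$, and totally nonnegative points are dense. Irreducibility and the dimension count follow. For smoothness I would cover $\Xo_\tau$ by such charts, using all cyclic rotations (Section~\ref{cyclic symmetry}), or use the splicing product when a chart fails to cover. The last part of (2) is Theorem~\ref{cell decomp}.

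For items (3), (4) and (5), I would put the standard Frobenius splitting on $\LG(n-1,2n-2)$, namely $(\prod\text{ of the boundary divisors})^{p-1}$, with a suitable anticanonical section built from the cyclically consecutive Plücker coordinates $\Delta_{[i,i+n]}$ restricted to $\IG^{\Omega}$. I would check that its degree matches $-K_{\LG}$, which is $n$ times the Plücker class. The divisors $\{\Delta_{[i,i+n]}=0\}\cap\chi_n$ should be the codimension-one electroid varieties. Compatibly split subvarieties are then closed under intersection and under taking components. Since each $\chi_\tau$ is an intersection of cyclic rotations of Schubert-type divisors, by the positroid description of $\Pi_f$, each $\chi_\tau$ is a union of compatibly split, hence reduced, subvarieties. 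Combining this with the set-theoretic partition into $\Xo_{\tau'}$, the irreducibility and dimensions from (1), and the poset from Theorem~\ref{ElectricalSummary}, I would get $\chi_\tau=\overline{\Xo_\tau}=\bigsqcup_{\tau'\le\tau}\Xo_{\tau'}$. For the closure I would use that $\dim\Xo_{\tau'}<\dim\Xo_\tau$ together with the fact that the boundary strata lie in the closure of the torus chart, by degeneration of edge weights to $0$ or $\infty$. Regularity in codimension one would follow from smoothness of $\Xo_\tau$ and of the codimension-one strata $\Xo_{\tau'}$, together with a local product structure near those strata given by the bridge charts.

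The main obstacle is showing that $\chi_\tau$, defined as the scheme-theoretic intersection $\Pi_{f_\tau}\cap\chi_n$, is reduced and irreducible, with no extra components. In particular, the anticanonical section must restrict to $\chi_n$ with the right multiplicities, and the scheme-theoretic intersections must match. I would attack this first by computing $\chi_\tau$ for the coatoms $\tau'\lessdot\tau_{\max}$ directly, and then propagate the result downward via compatible splitting.
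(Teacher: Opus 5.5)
Your induction for (1) runs in the wrong direction. Bottom-up, an open embedding $\Xo_{\tau'}\times\CC^*\hookrightarrow\Xo_\tau$ onto $\{\Delta_{I'_{i+1}}\neq 0\}$ only shows that this open set is irreducible. Nothing rules out a second component of $\Xo_\tau$ lying inside $\{\Delta_{I'_{i+1}}=0\}$. So calling the torus ``open dense'' and then deducing irreducibility is circular.

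Covering $\Xo_\tau$ by bridge charts is not justified either. The positroid analogue fails: the top cell of $\Gr(2,4)$ contains points with $\Delta_{13}=\Delta_{24}=0$ and $\Delta_{12}\Delta_{34}=-\Delta_{14}\Delta_{23}\neq 0$, and these lie in no adjacent-bridge chart. Splicing only helps when $\tau$ itself has a strand $(i,i+1)$, not for stray points outside the charts.

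The paper runs the same embedding top-down. If $\tau$ is not maximal, some adjacent strands $i,i+1$ do not cross, so there is an $i$-crossing pair $\tau\lessdot\tau'$. Proposition~\ref{p:uncrossing adjacent} then exhibits $\Xo_\tau\times\CC^*$ as a nonempty open subset of $\Xo_{\tau'}$. Irreducibility, smoothness and dimension therefore descend from $\Xo_{\tau_{\text{top}}}\subset\LG$. Only afterwards is your bottom-up induction (bridges, plus the splicing corollary for a strand $(i,i+1)$) used for (7), where density is now automatic.

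Likewise, proving splicing first via a Pl\"ucker-vanishing check gives at best a set-theoretic statement. The paper proves (6) last: it defines $\Phi$ and $\Psi$ by explicit Pl\"ucker formulas and ratios, and identifies the image with $\chi_\mu$ using density of $(\chi_\mu)_{\geq 0}$ and the dimension count. Note also that in general you get $\chi_{\tau_1}\times\chi_{\tau_2}$, and the blocks must be cyclic intervals. Your argument for (5) is essentially the paper's once (1) is in place.

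For (4), checking $\omega^{-1}\cong\mathcal{O}(n)$ only makes $\sigma^{p-1}$ a near-splitting. The real content is $\phi(1)=1$, which the paper proves in the chart $\nU\cong\AA^{\binom n2}$. There, each $\Delta_{[i,i+n]}$ is, up to sign, a minor of the response matrix on two disjoint cyclic intervals. A term order in which non-crossing monomials dominate makes the initial term of each minor its non-crossing matching, and the product of the $n$ initial terms is $\prod_{i<j}L_{ij}$ (Proposition~\ref{prop:sqrfree on not shorted}). Theorem~\ref{LeadingTermCriterion} then applies.

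Your propagation step also fails. $\Pi_f$ is an intersection of rotated Schubert \emph{varieties} of $\Gr$, not of the $n$ electroid divisors; there are only $2^n$ intersections of those divisors but $(2n-1)!!$ electroid varieties. Nothing in your construction makes those Schubert varieties compatibly split for a splitting of $\chi_n$. The paper instead inducts on codimension, using that every length-two interval of the Eulerian poset of pairings is a diamond (Proposition~\ref{2 coverings}). Hence $\chi_\tau$ is a component of $\chi_{\tau_1}\cap\chi_{\tau_1'}$ for the two middle elements of a diamond above $\tau$.

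Finally, for R1: bridge charts give a product structure at the generic point of $\Xo_\tau\subset\chi_{\tau'}$ only when $\tau\lessdot\tau'$ is an $i$-crossing pair (Proposition~\ref{p:uncrossing adjacent strong}). Most covers uncross non-adjacent strands. Smoothness of the open strata alone does not give regularity of the closure along them. The missing idea is transporting regularity between covering pairs. Lemmas~\ref{good i}--\ref{equiv of pairs} glue two $i$-crossing charts, using that $\Delta_{I_{i+1}}/\Delta_{I'_{i+1}}$ agrees for $\tau$ and $\tau'$. Lemma~\ref{boundary genericly smooth} then gives an algorithm connecting any cover to an $i$-crossing pair.
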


\section{Irreducibility}
\label{s:irred}
In this section, we will work with the convention that electroid varieties are embedded in $\Gr(n-1,2n)$. 
The group $GL_m$ acts on $\Gr(k,m)$ by right multiplication. For $a\in\mathbb{C}$ and $1\leq i\leq m$, the element $x_i(a)\in GL_m$ is the elementary matrix differing from the identity matrix by the entry $a$ in the $i$-th row and $(i+1)$-st column (wrapping around to the $1$-st column if $i=m$). Similarly, $y_i(a)$ is $x_i(a)^T$. We put $u_i(a):=x_i(a)y_{i-1}(a) = y_{i-1}(a)x_i(a)$.

If $v_1$, $v_2$, \dots, $v_m$ are the columns of a matrix representative for a point of $\Gr(k,m)$, then the matrices $x_i(a)$, $y_i(a)$ and $u_i(a)$ act on these columns by
\[\begin{array}{lcl}
x_i(a) (v_1,  \ldots, v_{i-1}, v_i, v_{i+1}, \ldots, v_{2n}) &=& (v_1, v_2, \ldots, v_{i-1}, v_i, av_i + v_{i+1}, \ldots, v_{2n}) \\
y_i(a) (v_1, v_2, \ldots, v_{i-1}, v_i, v_{i+1}, \ldots, v_{2n}) &=& (v_1, v_2, \ldots, v_{i-1}, v_i+av_{i+1}, v_{i+1}, \ldots, v_{2n}) \\
u_i(a) (v_1, v_2, \ldots, v_{i-1}, v_i, v_{i+1}, \ldots, v_{2n}) &=& (v_1, v_2, \ldots, av_i + v_{i-1}, v_i, av_i + v_{i+1}, \ldots, v_{2n}) \\
\end{array}\]

\begin{rem} \label{uActionDual}
    If we had, instead, worked in $\Gr(n+1, 2n)$, then we would need to replace $x_i$ and $y_i$ by their transposes, and thus the last formula would be \[ u_i(a) (v_1, v_2, \ldots, v_{i-1}, v_i, v_{i+1}, \ldots, v_m) = (v_1, v_2, \ldots, v_{i-1}, av_{i-1} + v_i + a v_{i+1}, v_{i+1}, \ldots, v_m). \] 
\end{rem}

\subsection{Embedding induced by an $i$-crossing pair}
In this subsection, we introduce a key embedding relating a pair of open electroid varieties whose pairings are related by crossing adjacent strands, given in Proposition \ref{p:uncrossing adjacent}.

\begin{defn}[\bf $i$-crossing pair]
A pair of pairings $\tau\lessdot\tau'$ is called an \emph{$i$-crossing pair} if $\tau'$ is formed from $\tau$ by crossing distinct non-intersecting strands $i$ and $i+1$ such that $\tau'(i)=\tau(i+1)$ and $\tau'(i+1)=\tau(i)$. 
\end{defn}

Throughout this section, it will be useful to be able to manipulate sets in the Grassmann necklace in the following way. By Remark \ref{r: no i,i-1}, given a Grassmann necklace $\mathcal{I}=\mathcal{I}_{f_{\tau}}$ associated to a pairing as in Subsection \ref{subsection:Electroid varieties}, we always have $i\not\in I_{i+1}$.

\begin{defn}
Let $\mathcal{I}$ be a Grassmann necklace associated to a pairing. If $i+1\in I_{i+1}$, define $I_{i+1}':=I_{i+1}-\{i+1\}\cup\{i\}$. Then, $I_{i+1}'$ is again a subset of $[2n]$ of size $n-1$.
\end{defn}

\begin{prop}\label{p:uncrossing adjacent}
Let $\tau\lessdot\tau'$ be an $i$-crossing pair. We have an open embedding of algebraic varieties
    \begin{align*}
    \psi: \mathring{\chi}_{\tau}\times\mathbb{C}^* &\longrightarrow  \mathring{\chi}_{\tau'}\\
    (Y,a) &\longmapsto u_i(a).Y
\end{align*}
The image of $\psi$ is the dense open subscheme of $\mathring{\chi}_{\tau'}$ given by the nonvanishing of $\Delta_{I_{i+1}'(\tau')}$. The inverse of $\psi$ can be obtained by $X\mapsto (u_i(-a).X,a)$, where $a=\frac{\Delta_{I_{i+1}(\tau')}}{\Delta_{I_{i+1}'(\tau')}}$.
\end{prop}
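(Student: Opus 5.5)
Our plan is to split the proof into two parts: first show that $\psi$ lands in $\Xo_{\tau'}$, then construct an explicit inverse on the locus $\{\Delta_{I'_{i+1}(\tau')}\neq 0\}$.

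\textbf{$u_i(a)$ preserves $\Xn$.} The identification $\Xn\simeq \IG^{\Omega}(n+1,2n)$ (Theorem~\ref{IsotropicSummary}) is compatible with Grassmannian duality. So it suffices to check that the transposed action of Remark~\ref{uActionDual}, conjugated by $D$, preserves the form $\Omega$. The map
\[ v_i\mapsto av_{i-1}+v_i+av_{i+1} \]
with the signs from $D$ is a symplectic transvection for $\Omega$. This is a direct two-line computation, since $\Omega$ only pairs adjacent coordinates. Hence $u_i(a).Y\in\Xn$.

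\textbf{$u_i(a)$ moves $\Pio_{f_\tau}$ into $\Pio_{f_{\tau'}}$.} We will use the column description of $f_X$. Since $u_i(a)$ adds multiples of $v_i$ to $v_{i-1}$ and $v_{i+1}$, the minimal spans defining $f_X(j)$ change only for $j$ whose defining window starts or ends at $i-1,i,i+1$. The electrical condition $i\notin I_{i+1}$ (Remark~\ref{r: no i,i-1}) and the fact that strands $i,i+1$ do not cross in $\tau$ pin down these changes. They should show that $f_{u_i(a)Y}=f_{\tau'}$, which by Theorem~\ref{matching to permutation} is the effect of swapping $\tau(i)$ and $\tau(i+1)$.

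A cleaner alternative is to work with the Grassmann necklace. For $I\in\binom{[2n]}{n-1}$, $\Delta_I(u_i(a)Y)$ is a polynomial in $a$ whose coefficients are minors of $Y$ with $i\pm1$ replaced by $i$. We would then verify the vanishing and nonvanishing conditions defining $\Pio_{\cI(\tau')}$. The key identity is
\[ \Delta_{I_{i+1}(\tau')}(u_i(a)Y)=a\,\Delta_{I'_{i+1}(\tau')}(u_i(a)Y), \]
together with $\Delta_{I'_{i+1}(\tau')}(u_i(a)Y)=\Delta_{I'_{i+1}(\tau')}(Y)\neq0$. This also shows that the image lies in the nonvanishing locus of $\Delta_{I'_{i+1}(\tau')}$.

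\textbf{The inverse.} Take $X\in\Xo_{\tau'}$ with $\Delta_{I'_{i+1}(\tau')}(X)\neq0$. Set $a=\Delta_{I_{i+1}(\tau')}/\Delta_{I'_{i+1}(\tau')}$, which is a regular function on this open set. First, $a\neq0$ because $I_{i+1}(\tau')$ is a necklace element, so its minor is nonvanishing. Next, $Y=u_i(-a)X$ satisfies $\Delta_{I_{i+1}(\tau')}(Y)=0$, by expanding as before. We then check that $Y\in\Xo_\tau$: it lies in $\Xn$ by the first step, and the necklace conditions for $\tau$ follow from that vanishing together with the remaining minors being unchanged.

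The two maps are mutually inverse because $u_i(a)u_i(-a)=1$ and the formula recovers $a$. Hence $\psi$ is an isomorphism onto an open subscheme. Density follows once $\Xo_{\tau'}$ is irreducible, or more simply because this principal open set is nonempty and contains the totally nonnegative part. Alternatively, we argue with dimensions by induction on $c(\tau')$.

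The main obstacle is the middle step: the combinatorial verification that the positroid stratum changes exactly from $f_\tau$ to $f_{\tau'}$, and in the inverse direction that $Y$ lands in $\Pio_{f_\tau}$ rather than in a smaller stratum. We would handle it by the column-span description of $f$, treating only the indices $i-1,i,i+1$ and using Lemma~\ref{pairing to necklace} to identify the relevant necklace entries.
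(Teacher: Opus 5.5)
Your overall architecture is the paper's. The forward landing in $\Xo_{\tau'}$ is Proposition~\ref{p:crossing}: Lemma~\ref{l:crossing} applied to $x_i(a)$ and $y_{i-1}(a)$, plus $u_i$-invariance of $\chi_n$, which the paper cites from Lam and you propose to verify through $\Omega$. The two compositions are then checked with Lemma~\ref{l:pluckerchange} and Lemma~\ref{vanishing lem 2}.

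The genuine gap is the step you flag yourself: that $Y=u_i(-a).X$ lies in $\Xo_\tau$ whenever $X\in\Xo_{\tau'}$ and $\Delta_{I'_{i+1}(\tau')}(X)\neq0$. Your reason is that the necklace conditions for $\tau$ follow from $\Delta_{I_{i+1}(\tau')}(Y)=0$ ``together with the remaining minors being unchanged'', and this is false. The matrix $u_i(-a)$ changes every $\Delta_J$ with $i\notin J$ and $J\cap\{i-1,i+1\}\neq\emptyset$, and these include necklace minors of $\tau$. By Remark~\ref{r: no i,i-1}, $I_{i+1}(\tau)$ never contains $i-1$ or $i$ but contains $i+1$ when $\tau(i+1)\neq i+2$. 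So $\Delta_{I_{i+1}(\tau)}(Y)=\Delta_{I_{i+1}(\tau)}(X)-a\,\Delta_{I_{i+1}(\tau)\setminus\{i+1\}\cup\{i\}}(X)$, and nothing you have established makes this nonzero. One vanishing minor cannot rule out that $Y$ sits in some other stratum of $\chi_n$. The paper does not derive this step from scratch either; it cites \cite[Proposition 5.25]{Lam2018}. As written, your inverse map is only a map into $\chi_n$, so the open-embedding claim is not proved.

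The step can be closed with tools already in the paper. Write $f'=f_{\tau'}$, so $f'(i)<f'(i+1)$, and apply Lemma~\ref{recover a} to $X$: $v_i-\lambda v_{i+1}\in\spann\{v_{i+2},\dots,v_{f'(i)}\}$ with $\lambda=1/a$. Hence the new column $v_{i+1}-av_i=-a(v_i-\lambda v_{i+1})$ of $Y$ lies in that span, giving $f_Y(i+1)\le f'(i)$. For $j\le f'(i)$ we have $v_{i+1}\notin\spann\{v_{i+2},\dots,v_j\}$. A short rank count using this shows $v_i\notin\spann\{v_{i+1}-av_i,v_{i+2},\dots,v_j\}$ for such $j$, so $f_Y(i)>f'(i)$. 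Since $Y\in\chi_n$, it lies in some $\Xo_\rho$ (Theorem~\ref{cell decomp}). The two inequalities say that strands $i$ and $i+1$ of $\rho$ are distinct and uncrossed. Your forward step then gives $X=u_i(a).Y\in\Xo_{\rho^\times}$, where $\rho^\times$ crosses these two strands. Disjointness of the strata forces $\rho^\times=\tau'$, hence $\rho=\tau$.

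Two smaller points:
\begin{itemize}
\item Your necklace variant of the forward step is circular as written. Nonvanishing of $\Delta_{I'_{i+1}(\tau')}$ on $\Xo_\tau$ is Lemma~\ref{vanishing lem 2}(2), which is deduced from the forward landing.
\item Density of the image cannot come from ``containing the totally nonnegative part''. Zariski density of electroid cells is proved only later, using this proposition. What works is irreducibility of $\Xo_{\tau'}$, supplied by the induction in the proof of Theorem~\ref{Main1}.1.
\end{itemize}
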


To prove Proposition \ref{p:uncrossing adjacent}, we use the following series of lemmas. 

\begin{lem} Suppose $f\in \El(n)$ and $X\in\Xo_f$. Let $a\in\C^*$.
\begin{enumerate}
    \item If $i+1<f(i+1)<f(i)<i+n$, then $X'=x_i(a).X\in\mathring{\Pi}_{f'}$ where $f'=fs_i$. 
    \item If $i+1-n < f^{-1}(i+1) < f^{-1}<i$, then $X'=y_i(a).X\in\mathring{\Pi}_{f'}$ where $f'=s_if$.
    \end{enumerate}
\label{l:crossing}
\end{lem}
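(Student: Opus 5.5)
We compute $f_{X'}$ directly from its definition, $f_{X'}(j)=\min\{j'\ge j : v'_j\in\spann\{v'_{j+1},\dots,v'_{j'}\}\}$, using the explicit column formula for $x_i(a)$. Only column $i+1$ changes: $v'_{i+1}=av_i+v_{i+1}$, and $v'_j=v_j$ for every other $j$ modulo $2n$. The condition $i+1<f(i+1)<f(i)<i+n$ in (1), and its mirror image in (2), is exactly what is needed for the spans occurring in the definition of $f$ to change in a controlled way.

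\textbf{Columns other than $i$ and $i+1$.} Fix $j\not\equiv i,i+1$ and consider the window $\{v_{j+1},\dots,v_{j'}\}$.
\begin{itemize}
\item If the window contains both positions $i$ and $i+1$, then its span is unchanged, because $\spann\{v_i,av_i+v_{i+1}\}=\spann\{v_i,v_{i+1}\}$.
\item If the window contains neither position, nothing changes.
\item If the window ends exactly at position $i+1$, then the windows for $X$ and $X'$ differ, and we must argue $f_{X'}(j)=f_X(j)$ separately. One option is to use the fact that $f(j)\ne i+1$ unless $j=f^{-1}(i+1)$, together with the inequality on $f$.
\item If the window starts at position $i+1$ (the case $j=i$), it is handled below.
\end{itemize}
A cleaner route is to apply $f_X$ to the span-rank data: the positroid $\Pio_f$ is determined by the ranks $r[j,j']=\operatorname{rank}\{v_j,\dots,v_{j'}\}$ of cyclic intervals. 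Multiplying by $x_i(a)$ leaves $r[j,j']$ unchanged unless the interval contains $i+1$ but not $i$, that is, unless it starts at $i+1$. So we only need to recompute the ranks $r'[i+1,j']$.

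\textbf{Recomputing $r'[i+1,j']$.} We have $r'[i+1,j']=\operatorname{rank}\{av_i+v_{i+1},v_{i+2},\dots,v_{j'}\}$. Since $X\in\Pio_f$, the position $f(i+1)$ is the first index at which $v_{i+1}$ enters the span of its successors, and $f(i)$ plays the same role for $v_i$. The hypothesis $f(i+1)<f(i)$ says $v_i$ becomes dependent on its successors later than $v_{i+1}$ does. Consequently, for $j'$ in the range $[f(i+1),f(i))$ the combination $av_i+v_{i+1}$ is independent of $v_{i+2},\dots,v_{j'}$, because $v_i$ is independent of $v_{i+1},\dots,v_{j'}$, while $v_{i+1}$ is not. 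This gives $r'[i+1,j']=r[i+1,j']+1$ on exactly this range, and no change elsewhere. Here $a\neq0$ is used.

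\textbf{Reading off $f'$.} Translating the new rank function back to a bounded affine permutation shows that the values of $f$ at $i$ and $i+1$ are swapped, that is, $f'=fs_i$. The other hypotheses, $i+1<f(i+1)$ and $f(i)<i+n$, guarantee that $fs_i$ is still bounded and that the rank jump sits strictly inside the window, so no degenerate loop or coloop case arises.

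\textbf{Part (2).} This is the same argument with rows in place of columns. It follows either by applying the duality of Subsection~\ref{Grassmannian Duality}, which interchanges $x_i$ and $y_i$ up to transpose and replaces $f$ by $f^{-1}$, or by repeating the rank computation with intervals ending at $i$ instead of starting at $i+1$.

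\textbf{Main obstacle.} The delicate step is the rank jump computation. It requires care with cyclic indices modulo $2n$, and with showing that nothing happens for $j'\ge f(i)$: there both $v_i$ and $v_{i+1}$ lie in the span, so the combination does too. Genericity is not needed, since every $a\in\C^*$ works.
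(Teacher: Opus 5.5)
Your proof is correct. It uses the same column-span linear algebra as the paper, but the bookkeeping is different.

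\textbf{The paper's route.} The paper proves only two facts:
\begin{enumerate}
\item $f'$ agrees with $f$ off the residues $i,i+1$. A window $\{j+1,\dots,j'\}$ with $j\not\equiv i$ that contains $i+1$ also contains $i$, and $\spann(v_i,av_i+v_{i+1})=\spann(v_i,v_{i+1})$.
\item $f'(i)\le f(i+1)$. Since $v_{i+1}\in\spann(v_{i+2},\dots,v_{f(i+1)})$ and $a\neq 0$, $v_i$ lies in $\spann(v'_{i+1},\dots,v'_{f(i+1)})$.
\end{enumerate}
Because $f'$ is an affine permutation agreeing with $f$ elsewhere, $\{f'(i),f'(i+1)\}$ must equal $\{f(i),f(i+1)\}$. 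Then $f'(i)\le f(i+1)<f(i)$ forces the swap.

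\textbf{Your route.} You compute $r'[i+1,j']$ on the three ranges $j'<f(i+1)$, $f(i+1)\le j'<f(i)$ and $j'\ge f(i)$. You then read off $f'(i)=f(i+1)$ and $f'(i+1)=f(i)$ from $f(j)=\min\{j'\ge j: r[j,j']=r[j+1,j']\}$. This is longer but self-contained: it never uses that $f_{X'}$ is a bijection. It also shows that only $f(i+1)<f(i)$ is needed. The paper's route is shorter because bijectivity replaces both the computation of $f'(i+1)$ and the lower bound on $f'(i)$.

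\textbf{Points to tidy.}
\begin{itemize}
\item Your third bullet is a non-issue. For $j\not\equiv i$, a window ending at $i+1$ starts at $j+1\neq i+1$, so it contains $i$ and falls under your first bullet. Your later remark that only intervals starting at $i+1$ change is the right statement, so the first route can be dropped.
\item Two asserted steps each deserve a line. First, there is no change for $j'<f(i+1)$: if $av_i+v_{i+1}\in\spann(v_{i+2},\dots,v_{j'})$, then $v_i\in\spann(v_{i+1},\dots,v_{j'})$, contradicting $j'<f(i)$. Second, spell out the final reading-off of $f'$.
\item For (2), ``rows in place of columns'' is inaccurate: $y_i(a)$ still acts on columns, replacing $v_i$ by $v_i+av_{i+1}$. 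The mirrored computation that works uses intervals ending at $i$ together with $f^{-1}(j)=\max\{j''\le j: v_j\in\spann(v_{j''},\dots,v_{j-1})\}$.
\item If you prefer duality, orthogonal complement sends $f$ to $g=f^{-1}+2n$. This is of type $(n+1,2n)$, not an element of $\El(n)$. It turns the hypotheses of (2) into $i+1+n<g(i+1)<g(i)<i+2n$, which are not those of (1). You would therefore need (1) in the general form requiring only $g(i+1)<g(i)$. Your argument does prove that form, but it conflicts with your remark that $i+1<f(i+1)$ and $f(i)<i+n$ are what rule out degenerate cases.
\end{itemize}
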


\begin{proof}
This proof follows the same structure as the proof of \cite[Proposition 3.10]{Lam2018}. We prove only (1), as the proof of (2) follows similarly. Let $v_i$ be the $i$-th column of a $k\times m$ matrix which represents $X$. Then, $X'$ can obtained from $X$ by replacing $v_{i+1}$ with $av_i+v_{i+1}$. Let $v_i'$ be the $i$-th column of the resulting matrix for $X'$. Our aim is to show that $f'=fs_i$. 

We have that $\spann(v_i)=\spann(v_i')$ and $\spann(v_i,v_{i+1}) = \spann(v_i',v_{i+1}')$. Thus, $f(j) = f'(j)$ for all $j\not\equiv i,i+1$ modulo $m$. 
    
Now, $v_{i+1}$ is in $\spann(v_{i+2},v_{i+3},\cdots ,v_{f(i+1)})$. So, $$\spann(v'_{i+1}=av_i + v_{i+1},v_{i+2}'=v_{i+2},\cdots ,v_{f(i+1)}'=v_{f(i+1)})$$ contains $\spann(av_i + v_{i+1},v_{i+1})$, which contains $\spann(v_i)$ because $a\not=0$. Thus, $v_i'=v_i\in\spann(v_{i+1}',\dots ,v_{f(i+1)}')$ and so $f'(i)\leq f(i+1)<f(i)$. Hence, $f'$ must be obtained from $f$ by swapping the values of $f(i)$ and $f(i+1)$ and so $f'=fs_i$.
\end{proof}

\begin{prop}\label{p:crossing}
    The electriod space $\chi$ is closed under the action of left multiplication by $u_i(a)$. Furthermore, let $\tau\lessdot\tau'$ be an $i$-crossing pair. For $X\in\Xo_\tau$ and $a\in\C^*$, 
    $$X':= u_i(a).X\in\Xo_{\tau'}.$$
\end{prop}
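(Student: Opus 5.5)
The proof has two parts: first show that $\chi_n$ is stable under $u_i(a)$, then track what $u_i(a)$ does to the bounded affine permutation.

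\textbf{Stability of $\chi_n$.} Since $\chi_n$ is the Zariski closure of $i(E_n)$ (Theorem~\ref{two notions of electroid space}) and $u_i(a)$ acts on $\Gr(n-1,2n)$ by automorphisms, it suffices to prove $u_i(a).i(E_n)\subseteq \chi_n$ for real $a>0$. The locus $\{a\in\C : u_i(a).\chi_n\subseteq\chi_n\}$ is Zariski closed, so it then contains every $a\in\C$. For $a>0$ I would use the electrical interpretation, following \cite[Section 3]{Lam2018} and the analogous $\Gr(n+1,2n)$ statement in \cite{CGS2021}.
\begin{itemize}
\item When $i$ is odd, the two columns $2j-1$ and $2j$ adjacent to a boundary vertex $v_j$ are affected. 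Acting by $u_i(a)$ should correspond to adjoining a boundary spike, i.e.\ a new edge of conductance depending on $a$ at $v_j$.
\item When $i$ is even, it should correspond to adjoining a boundary edge between consecutive vertices $v_j,v_{j+1}$.
\end{itemize}
Both operations produce cactus networks, so the image stays in $i(E_n)$.

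An alternative that avoids network combinatorics is to work with the isotropic model of Theorem~\ref{IsotropicSummary}. There one checks directly that $D u_i(a) D$, in the dual convention of Remark~\ref{uActionDual}, preserves the form $\Omega$ up to the isomorphism. Concretely, the map $v_i\mapsto av_{i-1}+v_i+av_{i+1}$, with the appropriate signs, should be a symplectic transvection for $\Omega$. This is a short computation and I would use it as a check of the first argument.

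\textbf{Tracking the permutation.} Write $u_i(a)=x_i(a)y_{i-1}(a)$ and apply Lemma~\ref{l:crossing} twice. First apply $y_{i-1}(a)$ to $X\in\Xo_{f_\tau}$, which gives $s_{i-1}f$. Then apply $x_i(a)$, which gives $s_{i-1}fs_i$. I must verify the inequality hypotheses of Lemma~\ref{l:crossing} at each stage.

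These hypotheses should follow from $\tau\lessdot\tau'$ being an $i$-crossing pair. Strands $i$ and $i+1$ do not cross in $\tau$, so with $f_\tau(j)=\tau(j)-1$ we get the needed ordering $i+1<f(i+1)<f(i)<i+n$ after choosing lifts. Similarly, the inverse condition follows from $f^{-1}(j)=\tau(j+1)$ shifted, using $g(g(i))=i+2n$.

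I then need the combinatorial identity $s_{i-1}f_\tau s_i=f_{\tau'}$. Using $f_\tau(j)=\tau(j)-1$, the swap of $\tau(i)$ and $\tau(i+1)$ corresponds to right multiplication by $s_i$ together with left multiplication by $s_{i-1}$ on values. This is needed to keep the involution structure $g\circ g=\mathrm{id}+2n$.

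Combining the two parts, $u_i(a).X\in\Pio_{f_{\tau'}}\cap\chi_n=\Xo_{\tau'}$.

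The main obstacle is that after applying $y_{i-1}(a)$ the intermediate permutation $s_{i-1}f$ is not electrical. So the second application of Lemma~\ref{l:crossing} must be checked for a general positroid variety rather than an electroid one. Its proof did not use the electrical hypothesis, but the inequalities must be verified by hand, including the case $f(i+1)=i+1$ and the boundary cases of the bound $i+n$.

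If this route is messy, I would instead argue directly as in the proof of Lemma~\ref{l:crossing}. Since $u_i(a)$ only alters columns $i-1$ and $i+1$ and preserves spans such as $\spann(v_{i-1},v_i,v_{i+1})$, one can compute $f'(j)$ directly for $j\in\{i-1,i,i+1\}$ and check that the result equals $f_{\tau'}$.
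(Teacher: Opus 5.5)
Your proposal is correct and follows the paper's proof. The paper cites \cite[Proposition 5.15]{Lam2018} for the $u_i(a)$-stability of $\chi_n$; your argument (positive $a$ as adjoining a boundary spike or edge, then extending to all $a$ by Zariski density) is what underlies that citation. The paper then applies Lemma~\ref{l:crossing} twice via $u_i(a)=x_i(a)y_{i-1}(a)$, giving $f_{\tau'}=s_{i-1}f_\tau s_i$, exactly as you do. Your worry that the intermediate permutation $s_{i-1}f_\tau$ is not electrical, and that degenerate cases such as $\tau(i+1)=i+2$ violate the lemma's literal hypotheses, is a point the paper glosses over. It is harmless: $s_{i-1}f_\tau$ agrees with $f_\tau$ at $i$ and $i+1$, and the span argument in the proof of Lemma~\ref{l:crossing} never uses electricity and goes through verbatim, with an empty span when $v_{i+1}=0$.
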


\begin{proof}
We apply Lemma \ref{l:crossing} twice together with the fact that $\chi_n$ is closed under the action of $u_i(a)$ by \cite[Proposition 5.15]{Lam2018}.
\end{proof}

\begin{rem}
    If the reader wants to check directly by hand that $u_i(a)$ preserves the Lagrangian Grassmannian, they should remember that the Lagrangian Grassmannian is embedded in $\Gr(n+1, 2n)$ and use the dual formula from Remark~\ref{uActionDual}.
\end{rem}

In Subsection \ref{subsection:Electroid varieties}, we established that electric permutations are in bijection with pairings via the map $\tau\mapsto f_{\tau}$. An electric permutation $f_{\tau}$ also corresponds to a Grassmann necklace $\mathcal{I}_{f_{\tau}}$. We now describe how to explicitly compute the Grassmann necklace $\mathcal{I}_\tau=\mathcal{I}_{f_{\tau}} = (I_1, \dots ,I_{2n})$, from the pairing $\tau$. 

\begin{lem}\label{pairing to necklace}
Fix $i$. Let $\tau$ be a pairing on $[2n]$. Then, $I_{i}(\tau)$ can be computed as follows. For each strand which is not the strand $(i,\tau(i))$, collect the endpoint which is reached first when moving clockwise from $i$. They form the set $I_i+1$ modulo $2n$, {where here ``modulo $2n$'' means we take representatives in $[2n]$. Reducing each element by $1$ yields $I_i$.}  
\end{lem}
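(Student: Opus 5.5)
Unwinding the definitions reduces the lemma to a counting statement. By Theorem \ref{matching to permutation}, $f_\tau(j)=\tau(j)-1$, where we lift $\tau$ to $\mathbb{Z}$ so that $j<\tau(j)<j+2n$; this is possible since $\tau$ is fixed-point-free, and the lift is the one for which $g=f_\tau+1$ satisfies $j+1\le g(j)\le j+2n-1$. The necklace formula from the background gives
\[ I_i=\{f_\tau(j) \mid j<i,\ f_\tau(j)\ge i\} \pmod{2n}. \]
Shifting by one,
\[ I_i+1=\{\tau(j) \mid j<i,\ \tau(j)\ge i+1\} \pmod{2n}. \]
By periodicity it suffices to take $j\in[i-2n,i-1]$. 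Equivalently, writing everything relative to the window $W=\{i+1,\dots,i+2n\}$ read clockwise from $i$, we need to understand which lifted strands $(j,\tau(j))$ with $j\le i-1<i+1\le \tau(j)$ contribute.

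Next we reorganize this by strands. Each strand $\{p,q\}$ of $\tau$ has exactly one lift with $j\in[i-2n,i-1]$ for each of its two endpoints, so it is enough to decide, strand by strand, which endpoint appears. Take a strand with endpoints $p,q$, placed in $W$ so that $i+1\le p<q\le i+2n$, and suppose it is not the strand through $i$, so $q\ne i+2n$. Its two lifts starting below $i$ are $j=p-2n$, with $\tau(j)=q-2n$ or $q$, and $j=q-2n$, with $\tau(j)=p$ or $p+2n$. We use the bound $j<\tau(j)<j+2n$. For $j=p-2n$ this forces $\tau(j)=q-2n\le i$, so this lift is excluded. For $j=q-2n$ it forces $\tau(j)=p\ge i+1$, so this lift is included and contributes $p$, the endpoint reached first moving clockwise from $i$. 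For the strand through $i$ itself, the endpoints in $W$ are $\tau(i)$ and $i+2n$. The lift $j=i-2n$ has $\tau(j)=\tau(i)-2n<i+1$, so it is excluded. The other lift starts at $j=\tau(i)-2n$ and has $\tau(j)=i$ (since it must lie strictly between $j$ and $j+2n$), so it is excluded as well. This strand therefore contributes nothing.

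Counting gives $n-1$ elements, one from each of the other $n-1$ strands, which matches $|I_i|=n-1$. Subtracting $1$ and reducing mod $2n$ yields $I_i$.

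The main obstacle is the bookkeeping of lifts, namely fixing the correct affine lift of $\tau$. I would take care to check that the normalization $i+1\le g(i)\le i+2n-1$ singles out exactly the lift with $j<\tau(j)<j+2n$, so that the case analysis above is exhaustive.
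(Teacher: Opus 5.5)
Your proof is correct and follows the paper's route: you substitute $f_\tau(j)=\tau(j)-1$, with the lift $j<\tau(j)<j+2n$, into the necklace formula $I_i=\{f(j)\mid j<i,\ f(j)\ge i\}$ and read off one endpoint per strand. The paper specializes to $i=2n$ by cyclic symmetry, checks only that each $a-1$ lies in $I_{2n}$, and concludes from $|I_{2n}|=n-1$; you handle general $i$ and verify the exclusions directly, so the count is only a sanity check. One small correction: restricting to $j\in[i-2n,i-1]$ follows from the bound $\tau(j)<j+2n$, not from periodicity, though this does not affect the argument.
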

\begin{proof}
    Given a bounded affine permutation $g\in B(k,n)$, the corresponding Grassmann necklace is given by $\mathcal{I}(g) = (I_1, I_2, \dots, I_n)$ where $I_i=\{g(a) \mid a<i \text{ and } g(a)\geq i\}$ mod $n$. 
    
    Let $f_\tau\in B(n-1,2n)$ be the electrical affine permutation corresponding to $\tau$. It suffices to prove that the result for $I_{2n}$, as symmetry extends the result to all $I_i$. Consider a strand $(a,b)$ which is not strand $(2n,\tau(2n))$. Let $a<b$; then, $a$ is reached first when moving clockwise from $2n$. Then, $f_\tau(b) = a-1 +2n \geq 2n$. Hence, for all $n-1$ strands $(a,b)$, we have $b<2n$ and $f_\tau(b) \geq 2n$, so $a-1 \equiv f_\tau(b)\in I_{2n}$. As $I_{2n}$ has only $n-1$ elements, these are precisely the elements of $I_{2n}$.
\end{proof}

\begin{rem}
Lemma \ref{pairing to necklace} can be reinterpreted as saying that for each strand which is not $(i,\tau(i))$, take the endpoint $b$ which is \emph{not} reached first when moving clockwise from $i$ and add $f(b)$ to $I_i$.
\end{rem}

\begin{rem}\label{r: no i,i-1}
Let $\mathcal{I}=\mathcal{I}_{\tau}$ be a Grassmann necklace coming from a pairing $\tau$. Lemma \ref{pairing to necklace} shows that $i$ and $i-1$ are never in $I_{i+1}$. Furthermore, $i+1\in I_{i+1}$ if and only if $\tau(i+1)\not=i+2$.
\end{rem}

As a corollary of Lemma \ref{pairing to necklace}, we have the following. 

\begin{lem}\label{technical}
   Let $\tau\lessdot\tau'$ be an $i$-crossing pair and $f$ and $f'$ the corresponding electrical affine permutations. We have that $f(i)>f(i+1)$ and $I_{i+1}(f')=I_{i+1}(f)-\{f(i)\}\cup \{f(i+1)\}$. 
\end{lem}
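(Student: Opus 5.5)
The plan is to read off both claims from the explicit descriptions already available: $f_\tau(j)=\tau(j)-1$ from Theorem~\ref{matching to permutation}, and the strand-by-strand recipe for $I_{i+1}$ in Lemma~\ref{pairing to necklace}. Throughout, write $g(j)=f(j)+1$, so that $g(j)$ is the lift of $\tau(j)$ lying in the window $[j+1,j+2n-1]$.

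\textbf{Step 1: cyclic order of the endpoints.} Since strands $i$ and $i+1$ are distinct, $\tau(i)\neq i+1$. Since they do not intersect, the chords $(i,\tau(i))$ and $(i+1,\tau(i+1))$ do not cross. Starting at $i+1$ and moving clockwise, we therefore meet the four endpoints in the order
\[ i+1,\ \tau(i+1),\ \tau(i),\ i . \]
If instead $\tau(i)$ came before $\tau(i+1)$, the endpoints would alternate and the chords would cross.

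\textbf{Step 2: the inequality $f(i)>f(i+1)$.} The lift $g(i+1)$ lies in $[i+2,i+2n]$. Because $\tau(i)\neq i+1$, the lift $g(i)$ lies in $[i+2,i+2n-1]$. Both values thus sit in the common window $[i+2,i+2n]$, and in that window the integer order agrees with the clockwise order starting just after $i+1$. Step~1 then gives $g(i+1)<g(i)$, that is, $f(i+1)<f(i)$.

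\textbf{Step 3: comparing the necklace sets.} Apply Lemma~\ref{pairing to necklace} at the index $i+1$. For each strand other than the one through $i+1$, record the first endpoint reached clockwise from $i+1$ and subtract $1$. Every strand not involving $i$ or $i+1$ is the same in $\tau$ and $\tau'$, so those strands contribute identically to both sets. It remains to compare the two special strands.
\begin{itemize}
\item For $\tau$, the excluded strand is $(i+1,\tau(i+1))$. The other special strand is $(i,\tau(i))$, and by Step~1 its first endpoint from $i+1$ is $\tau(i)$. It contributes $\tau(i)-1\equiv f(i)$.
\item For $\tau'$, the excluded strand is $(i+1,\tau'(i+1))=(i+1,\tau(i))$. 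The other special strand is $(i,\tau(i+1))$, whose first endpoint from $i+1$ is $\tau(i+1)$. It contributes $f(i+1)$.
\end{itemize}
Hence $I_{i+1}(f')=I_{i+1}(f)\setminus\{f(i)\}\cup\{f(i+1)\}$, with elements read modulo $2n$ as in the lemma. The elements $f(i)$ and $f(i+1)$ are distinct modulo $2n$ because $\tau(i)\ne\tau(i+1)$, so this is a genuine exchange of one element for another.

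\textbf{Expected obstacle.} The only delicate point is the bookkeeping between affine lifts and residues modulo $2n$. The inequality in Step~2 must be stated for the affine values of $f$, whereas the necklace identity in Step~3 is about residues. I would handle this by fixing the common window $[i+2,i+2n]$ once and checking the edge cases $\tau(i+1)=i+2$ and $\tau(i)=i-1$ directly; neither affects the argument.
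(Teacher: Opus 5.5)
Your proof is correct and follows the paper's approach. The paper gives no written proof and presents this lemma as an immediate corollary of Lemma~\ref{pairing to necklace}. Your three steps supply the omitted verification: the clockwise order $i+1,\tau(i+1),\tau(i),i$ of the endpoints, the common window $[i+2,i+2n]$ for comparing the affine values $f(i)$ and $f(i+1)$, and the strand-by-strand comparison of $I_{i+1}$ for $\tau$ and $\tau'$.
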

In particular, $I_{i+1}(f')$ is strictly lexicographically less than $I_{i+1}(f)$.
\begin{lem}
Given $X\in\chi_n$ and $a\in\C^*$, let $X':=u_i(a).X$. Then, for any $J\in\binom{[2n]}{n-1}$, 
$$\Delta_J(X')=
\begin{cases}
    \Delta_J(X) + a\Delta_{J-\{i-1\}\cup\{i\}}(X)+a\Delta_{J-\{i+1\}\cup\{i\}}(X)&i-1,i+1\in J\text{ and } i\not\in J\\
    \Delta_J(X) +a\Delta_{J-\{i-1\}\cup\{i\}}(X)&i-1\in J\text{ and } i,i+1\not\in J\\
    \Delta_J(X) +a\Delta_{J-\{i+1\}\cup\{i\}}(X)&i+1\in J\text{ and } i-1,i\not\in J\\
    \Delta_J(X)&\text{else.}
\end{cases}$$
\label{l:pluckerchange}
\end{lem}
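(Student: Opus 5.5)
The plan is a direct computation using multilinearity of the determinant in the columns. We work in $\Gr(n-1,2n)$. Fix a $(n-1)\times 2n$ matrix with columns $v_1,\dots,v_{2n}$ representing $X$. By the column formula for $u_i(a)$, the matrix $u_i(a).X$ has the same columns except in positions $i-1$ and $i+1$. Column $i-1$ becomes $v_{i-1}+av_i$, and column $i+1$ becomes $v_{i+1}+av_i$; column $i$ is unchanged. Indices are taken mod $2n$, which requires $2n\ge 3$ so that $i-1,i,i+1$ are distinct.

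For $J\in\binom{[2n]}{n-1}$, the Plücker coordinate $\Delta_J(X')$ is the determinant of the submatrix with columns indexed by $J$. I would expand it by multilinearity in the (at most two) modified columns that lie in $J$, and then split into cases according to which of $i-1,i,i+1$ belong to $J$.

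\textbf{If neither $i-1$ nor $i+1$ is in $J$.} No column of the submatrix changes, so $\Delta_J(X')=\Delta_J(X)$.

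\textbf{If $i\in J$.} Each modified column $v_{i\pm1}+av_i$ in $J$ contributes $a v_i$, which duplicates the column $v_i$ already present in the submatrix. Every term containing an $a v_i$ therefore vanishes, and again $\Delta_J(X')=\Delta_J(X)$. This covers the ``else'' case.

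\textbf{If $i\notin J$ and exactly one of $i\pm1$ is in $J$, say $i-1$.} Expanding gives $\Delta_J(X)$ plus $a$ times the determinant with $v_{i-1}$ replaced by $v_i$ in the same slot. Since $i-1$ and $i$ are cyclically adjacent, and no element of $J$ lies strictly between them, this replacement is the minor $\Delta_{J-\{i-1\}\cup\{i\}}$ with no sign change.

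The one point needing care is the wraparound case, for instance $i=1$ with $i-1=2n$. Moving a column from the last position to the first then involves a cyclic permutation of $n-1$ columns, whose sign is $(-1)^{n-2}$. I expect this to be absorbed by the sign convention of the cyclic action in \S\ref{cyclic symmetry}, where columns wrap with a factor $(-1)^k$ in the definition of $x_{2n}$ and $y_{2n}$. Alternatively, one can simply restrict attention to $2\le i\le 2n-1$ and invoke cyclic symmetry for the remaining cases.

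\textbf{If $i\notin J$ and both $i\pm1$ are in $J$.} Expanding gives four terms. The $a^2$ term has two copies of $v_i$ and vanishes. The two linear terms give $a\Delta_{J-\{i-1\}\cup\{i\}}+a\Delta_{J-\{i+1\}\cup\{i\}}$, with orderings preserved for the same adjacency reason as above.

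The only real obstacle is the sign bookkeeping at the wraparound indices. Everything else is routine multilinearity. The hypothesis $X\in\chi_n$ is not actually needed for the formula; it holds on all of $\Gr(n-1,2n)$.
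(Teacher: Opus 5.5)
Your approach is the paper's own. Its proof is the one-line remark that the formula follows from multilinearity of the determinant in the columns, and your case analysis carries this out correctly for $2\le i\le 2n-1$.

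The gap is the wrap-around, and it is not mere bookkeeping. Your sign $(-1)^{n-2}$ is right. With the paper's definitions as written (the entry $a$ of $x_{2n}(a)$ and $y_{2n}(a)$ simply wraps to column $1$, with no sign), this sign makes the lemma \emph{false} for $i\in\{1,2n\}$ when $n$ is odd. Take $n=3$, $i=1$, $J=\{5,6\}$. The operator $u_1(a)=x_1(a)y_0(a)$, with $y_0=y_6$, replaces $v_6$ by $v_6+av_1$, so
\[
\Delta_{56}(X')=\Delta_{56}(X)+a\det(v_5,v_1)=\Delta_{56}(X)-a\Delta_{15}(X).
\]
The lemma instead predicts $+a\Delta_{15}(X)$. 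Moreover $\Delta_{15}$ is not identically zero on $\chi_3$: it equals the grove coordinate $L_{123}$. The paper's one-line proof passes over this as well.

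Neither of your proposed escapes closes the gap as written.

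\emph{The sign in $x_{2n}$, $y_{2n}$.} The paper puts no sign into these matrices; the factor $(-1)^k$ you recall belongs only to the cyclic action on $\Gr(k,2n)$. A wrapped coefficient $(-1)^k a=(-1)^{n-1}a$ would actually make the sign wrong for every $n$.

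\emph{Cyclic symmetry.} This does not transport the interior cases to the paper's $u_1$ and $u_{2n}$. Conjugate $u_{2n-1}(a)$ by the positivity-preserving shift $v\mapsto((-1)^{n}v_{2n},v_1,\dots,v_{2n-1})$ (sign $(-1)^{k-1}$ with $k=n-1$). The result adds $av_{2n}$ to $v_{2n-1}$ and $(-1)^n av_{2n}$ to $v_1$, i.e.\ a twisted $u_{2n}(a)$, not the paper's.

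So your argument actually establishes the following.
\begin{itemize}
\item The formula holds verbatim for $2\le i\le 2n-1$, and for all $i$ when $n$ is even.
\item For $i\in\{1,2n\}$ with the paper's matrices, the term whose index crosses the seam between $2n$ and $1$ acquires a factor $(-1)^n$. That term is $\Delta_{J-\{2n\}\cup\{1\}}$ for $i=1$ and $\Delta_{J-\{1\}\cup\{2n\}}$ for $i=2n$.
\end{itemize}

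Alternatively, declare explicitly that the wrapped entries of $x_{2n}(a)$ and $y_{2n}(a)$ are $(-1)^n a$. Then:
\begin{itemize}
\item the lemma holds verbatim;
\item your cyclic-symmetry reduction becomes a valid proof of the remaining cases;
\item these twisted $u_1,u_{2n}$ are the ones obtained from the interior $u_i$ by a $\chi_n$-preserving shift, so they are the ones compatible with Proposition~\ref{p:crossing}.
\end{itemize}
Either way, you must state the convention and track the sign, rather than leave it as an expectation.
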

\begin{proof}
This follows immediately from the multilinearity (with respect to columns) of the determinant and the fact that $u_i(a).X$ is formed from $X$ by adding $a$ times column $i$ to columns $i-1$ and $i+1$.
\end{proof}

Below, we list several properties satisfied by Pl\"ucker coordinates on positroid and electroid varieties.
\begin{lem} \label{vanishing lem 1}
Let $f\in B(k,n)$ be an affine bounded permutation and let $i$ be an index for which  $f(i) \neq i$. Then for each $j <_{i+1} f(i)$, the coordinate $\Delta_{I_{i}(f)\setminus\{i\}\bigcup \{j\} }$ vanishes on the closed positroid variety ${\Pi}_{f}$.

Hence, let $f\in \El(n)$ an electrical permutation with $f(i)\not=i$. Then, for each $j
    <_{i+1}f(i)$, the coordinate $\Delta_{I_{i}(f)\setminus\{i\}\bigcup \{j\} }$ vanishes on the closed electroid variety ${\chi}_{f}$.
\end{lem}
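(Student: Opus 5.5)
The plan is to prove the first statement for positroid varieties and then deduce the electroid statement by restriction. Since $\chi_f=\Pi_f\cap\Xn$ by definition, any Pl\"ucker coordinate vanishing on $\Pi_f$ also vanishes on $\chi_f$, so the second paragraph follows from the first applied to $f\in\El(n)\subseteq B(n-1,2n)$.

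For the positroid statement, I would first reduce to the open positroid variety. The closed positroid variety $\Pi_f$ is the Zariski closure of $\Pio_f$ (cited from \cite{KLS}), and the vanishing locus of a Pl\"ucker coordinate is closed. It therefore suffices to show that $\Delta_J$ vanishes on every $X\in\Pio_f$, where $J:=I_i(f)\setminus\{i\}\cup\{j\}$ and $j<_{i+1}f(i)$. If $j\in I_i(f)\setminus\{i\}$, then $J$ has fewer than $k$ distinct elements and $\Delta_J=0$ trivially. So assume $j\notin I_i(f)$, and $j\neq i$ by the hypothesis on $j$.

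The key step uses the column description of $f_X=f$. Let $v_1,\dots,v_m$ be the columns of a representative of $X$. By the standard description of the necklace, $I_i(f)$ is the lexicographically minimal basis in the order $\leq_i$, obtained greedily by scanning $i,i+1,\dots,i-1$ and keeping each column not in the span of the previously kept ones. Since $f(i)\neq i$, the vector $v_i$ is nonzero and so $i\in I_i$. By definition of $f$, $v_i\in\spann(v_{i+1},\dots,v_{f(i)})$ and $v_i\notin\spann(v_{i+1},\dots,v_{f(i)-1})$. Set $B:=I_i\setminus\{i\}$. I will show that $\{v_b:b\in B\}\cup\{v_j\}$ is dependent whenever $j\in(i,f(i))$ cyclically. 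The greedy construction shows that $\{v_b : b\in B,\ b\leq_i j\}\cup\{v_i\}$ spans $\spann(v_i,v_{i+1},\dots,v_j)$. Since $v_i\notin\spann(v_{i+1},\dots,v_j)$, this last space has dimension exactly one more than $\spann(v_{i+1},\dots,v_j)$. Hence the $B$-elements up to $j$, which are independent and lie in $\spann(v_{i+1},\dots,v_j)$ (after checking that greedy from $i+1$ selects the same elements, using that $v_i$ enters last, near $f(i)$), already span $\spann(v_{i+1},\dots,v_j)\ni v_j$. Therefore $\Delta_J(X)=0$.

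The step I expect to need the most care is this dimension count: that $B$ restricted to $(i,j]$ spans $\spann(v_{i+1},\dots,v_j)$ whenever $j<f(i)$. I would argue it via rank functions. The rank of $\{i,\dots,j\}$ exceeds the rank of $\{i+1,\dots,j\}$ by exactly one precisely when $j<f(i)$. Greedy selection counts ranks of initial intervals, so $|B\cap(i,j]|=\mathrm{rk}\{i,\dots,j\}-1=\mathrm{rk}\{i+1,\dots,j\}$. These $v_b$ are independent and lie in $\spann(v_{i+1},\dots,v_j)$, so they span it. Alternatively, I could cite \cite{KLS} or \cite{P} for this standard fact about Grassmann necklaces.
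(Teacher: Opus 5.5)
Your proof is correct, but it takes a longer route than the paper's.

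The paper stays on the closed variety and uses only the necklace recursion. Because $f(i)\neq i$, we have $i\in I_i(f)$, hence $I_{i+1}(f)=I_i(f)\setminus\{i\}\cup\{f(i)\}$. So $J=I_{i+1}(f)\setminus\{f(i)\}\cup\{j\}$. For $j<_{i+1}f(i)$, this set (when it has $k$ elements) is strictly smaller than $I_{i+1}(f)$ in the $\leq_{i+1}$ order. Thus $\Delta_J$ is literally one of the equations cutting out $\Pi_f$, and the electroid case follows from $\chi_f=\Pi_f\cap\Xn$ exactly as you say.

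Your greedy-basis rank count on $\Pio_f$ is in effect a pointwise re-derivation of that recursion below $f(i)$: it says the greedy basis for $\leq_{i+1}$ agrees with $B$ on $(i,f(i))$. Your parenthetical is not needed for the count itself, since the vectors indexed by $B\cap(i,j]$ lie in $\spann(v_{i+1},\dots,v_j)$ trivially.

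The two approaches trade off as follows:
\begin{itemize}
\item \textbf{Your approach} gives a transparent column-span reason why $v_j\in\spann\{v_b : b\in B\}$. It costs the nontrivial input $\Pi_f=\overline{\Pio_f}$ from \cite{KLS}. If you want $\Delta_J$ to lie in the ideal of $\Pi_f$ (hence of the scheme $\chi_f$), rather than merely vanish at points, it also needs reducedness of $\Pi_f$.
\item \textbf{The paper's argument} gets ideal membership directly from the definition.
\end{itemize}
Since the lemma is later only evaluated at points of open electroid varieties, either version suffices for the paper's purposes.
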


\begin{proof}
Since $f(i)\not=i$, we have $i\in I_i(f)$ and $I_{i+1}(f)=I_{i}\setminus\{i\}\bigcup \{f(i)\}$. So for each $ j
    <_{i+1}f(i)$, $I_{i}\setminus \{i\}\bigcup \{j\}=I_{i+1}\setminus\{f(i)\}\bigcup \{j\}<_{i+1} I_{i+1}$. So $\Delta_{I_{i}\setminus \{i\}\bigcup \{j\}}=0$ on the closed positroid variety $\Pi_f$. The statement about electroid varieties follows from the fact that $\chi_f$ is the scheme theoretic intersection $\Pi_f\bigcap \chi$. 
\end{proof}

In the context of Proposition \ref{p:uncrossing adjacent}, the rational function $a=\frac{\Delta_{I_{i+1}}(X)}{\Delta_{I_{i+1}'}(X)}$ can be interpreted as the inverse of a coefficient of a linear expansion of columns. We present this result in the next lemma, which will be of use in a later section.

\begin{lem}\label{recover a}
Let $X\in \Xo_\tau\subset \Gr(k,n)$ be represented by a matrix with columns given by $v_1,v_2,\cdots,v_n$. Let $f$ be the corresponding electrical permutation such that $f(i)<f(i+1)$. Then there is a unique $\lambda$ for which $v_i-\lambda v_{i+1}\in \spann \{v_{i+2},\cdots, v_{f(i)}\}$; specifically, this unique $\lambda$ is $\frac{\Delta_{I_{i+1}'(\tau)}(X)}{\Delta_{I_{i+1}(\tau)}(X)}$. 
\end{lem}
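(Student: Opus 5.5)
Here $I_{i+1}'$ means $I_{i+1}\setminus\{i+1\}\cup\{i\}$, which is a genuine $(n-1)$-subset by Remark~\ref{r: no i,i-1}. The idea is to read off $\lambda$ with Cramer's rule inside the $(n-1)$-dimensional column space. I will work with the necklace $\mathcal{I}_\tau=\mathcal{I}_f$. Since $X\in\Xo_\tau\subset\Pio_f$, the coordinate $\Delta_{I_{i+1}}(X)$ is nonzero, so the columns $\{v_j : j\in I_{i+1}\}$ form a basis of $\C^{n-1}$. The plan is to identify this basis, expand $v_i$ in it, and show that every basis vector other than $v_{i+1}$ lies in $W:=\spann\{v_{i+2},\dots,v_{f(i)}\}$.

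\textbf{Step 1 (structure of $I_{i+1}$).} By Lemma~\ref{pairing to necklace}, $I_{i+1}=\{f(j)\bmod 2n : j<i+1,\ f(j)\ge i+1\}$. I would check that $i+1\in I_{i+1}$; this is the case $\tau(i+1)\neq i+2$ of Remark~\ref{r: no i,i-1}. It should follow from $f(i)<f(i+1)$, since $f(i+1)=i+1$ would be impossible for an electrical permutation. I would then show that $f(i)\notin I_{i+1}$, whereas $f(i)\in I_i\setminus\{i\}\cup\{f(i)\}$ by the necklace rule. Next, every element $b\in I_{i+1}$ other than $i+1$ satisfies $i+1<_{i+1} b$. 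The key point is that all those $b$ that are ``$<_{i+1} f(i)$'' give columns lying in $W$, while those beyond $f(i)$ can be handled as in Step 2.

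\textbf{Step 2 (expansion).} Since $i\notin I_{i+1}$, write $v_i=\sum_{b\in I_{i+1}} c_b v_b$. Cramer's rule gives $c_b=\Delta_{I_{i+1}\setminus\{b\}\cup\{i\}}/\Delta_{I_{i+1}}$, up to sign. Sorting the indices in the cyclic order starting at $i$ makes $i$ occupy the slot that $i+1$ occupied, so the sign is $+$ for $b=i+1$, and $c_{i+1}=\Delta_{I'_{i+1}}/\Delta_{I_{i+1}}$. For $b$ with $f(i)<_{i+1} b$, I claim $c_b=0$. Indeed $I_{i+1}\setminus\{b\}\cup\{i\}=I_i\setminus\{i\}\cup\{\cdot\}$ style sets are governed by Lemma~\ref{vanishing lem 1}. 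Alternatively, and more directly, by definition of $f$ we have $v_i\in\spann\{v_{i+1},\dots,v_{f(i)}\}$. Expanding each of $v_{i+1},\dots,v_{f(i)}$ in the basis and using that the necklace basis is ``greedy'' (the elements of $I_{i+1}$ that are $\le_{i+1} f(i)$ span $\spann\{v_{i+1},\dots,v_{f(i)}\}$, by the Grassmann necklace being the lex-minimal basis in order $\le_{i+1}$) shows the coefficients on the later basis vectors vanish. Hence $v_i-c_{i+1}v_{i+1}\in\spann\{v_b : b\in I_{i+1},\ i+1<_{i+1}b\le_{i+1} f(i)\}\subseteq W$, so $\lambda=c_{i+1}$ works.

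\textbf{Step 3 (uniqueness).} If two values $\lambda\ne\lambda'$ both worked, their difference would give $v_{i+1}\in W$, i.e.\ $f(i+1)\le f(i)$, contradicting $f(i)<f(i+1)$. The same argument shows $v_{i+1}\notin W$, so $\lambda$ is determined.

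\textbf{Main obstacle.} The delicate part is Step 2's greedy-basis claim: that $\{b\in I_{i+1}: b\le_{i+1} f(i)\}$ spans $\spann\{v_{i+1},\dots,v_{f(i)}\}$. I would prove it from the standard fact that, for $X\in\Pio_f$, $I_{i+1}$ is the lex-first basis in the order $\le_{i+1}$. Greedily scanning the columns $v_{i+1},v_{i+2},\dots$ and keeping each one that is independent of its predecessors yields $I_{i+1}$. By construction, the kept indices up to $f(i)$ span all columns up to $f(i)$. Sign bookkeeping in Cramer's rule also requires care, but it is routine.
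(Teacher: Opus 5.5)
Your proof is correct, but it takes a different route from the paper's.

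\textbf{The paper's route.} It starts from the relation $v_i=\lambda v_{i+1}+\sum_{j=i+2}^{f(i)}a_jv_j$, which exists by the definition of $f(i)$. It wedges both sides with the $k-1$ columns indexed by $I_{i+1}\setminus\{i+1\}$. It then kills every cross term $\Delta_{I_{i+1}\setminus\{i+1\}\cup\{j\}}$ using Lemma~\ref{vanishing lem 1} at the index $i+1$. That vanishing needs $j\le f(i)<f(i+1)$, so the hypothesis is used there as well as for uniqueness.

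\textbf{Your route.} You expand $v_i$ in the fixed basis $\{v_b : b\in I_{i+1}\}$ and read off the coefficient of $v_{i+1}$ by Cramer's rule. You then use that on $\Pio_f$ the set $I_{i+1}$ is the lex-first basis for $\le_{i+1}$. This is immediate from the defining conditions of $\Pio_{\mathcal{I}}$, and it makes $I_{i+1}$ the greedy basis. Hence the vectors indexed by $I_{i+1}\cap[i+1,f(i)]$ already span $\spann\{v_{i+1},\dots,v_{f(i)}\}$, which contains $v_i$. The remaining coefficients therefore vanish by uniqueness of basis expansions.

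\textbf{Comparison.} Your version gives the full expansion of $v_i$ in the necklace basis. It uses $f(i)<f(i+1)$ only for uniqueness and to get $i+1\in I_{i+1}$. The paper's version is shorter because Lemma~\ref{vanishing lem 1} is already available and no greedy-basis fact is needed.

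A few side remarks in your write-up are wrong, though Steps 2--3 do not depend on them.
\begin{itemize}
\item In Step 1, $f(i)$ does lie in $I_{i+1}$ whenever $f(i)\neq i$, because $I_{i+1}=I_i\setminus\{i\}\cup\{f(i)\}$. For example, in Example~\ref{e:embedding} one has $f_{\tau'}(6)\equiv 8\in I_7(\tau')=\{2,7,8\}$, and this pair satisfies the lemma's hypothesis.
\item $f(i+1)=i+1$ is possible for an electrical permutation, namely when $\tau(i+1)=i+2$. What is impossible is $f(i+1)=i+1$ together with $f(i)<f(i+1)$. That combination would force $f(i)=i$, i.e.\ $\tau(i)=i+1$, contradicting $\tau(i+1)=i+2$.
\item Your first justification in Step 2 is garbled. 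The correct identity is $I_{i+1}\setminus\{b\}\cup\{i\}=I_i\setminus\{b\}\cup\{f(i)\}$. This set is lexicographically smaller than $I_i$ in $\le_i$ when $f(i)<_{i+1}b$, so $c_b=0$ also follows directly from the defining equations of $\Pi_f$. Your greedy-basis argument already suffices, though.
\end{itemize}
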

\begin{proof}
By Lemma \ref{pairing to necklace}, $f(i)<f(i+1)$ implies that $f(i+1)\not=i+1$ or equivalently $i+1\in I_{i+1}(\tau)$. Thus, $I_{i+1}'(\tau)=I_{i+1}(\tau)\setminus\{i+1\}\bigcup\{i\}$ is well-defined.
    
Since $v_i\in \spann \{v_{i+1},v_{i+2},\cdots, v_{f(i)}\}$, we know that $v_i-\lambda v_{i+1}\in \spann \{v_{i+2},\cdots, v_{f(i)}\}$ for some $\lambda$. Furthermore, since $f(i+1)>f(i)$, $v_{i+1}\not\in \spann \{v_{i+2},\cdots, v_{f(i)}\}$. Thus, such a $\lambda$ is unique. And we show that $\lambda$ is uniquely determined by the formula $\frac{\Delta_{I_{i+1}'}(X)}{\Delta_{I_{i+1}}(X)}$.

Consider the equality $v_i=\lambda v_{i+1}+\sum_{j=i+2}^{f(i)}a_jv_j$ of column vectors. We can attach the same $k-1$ columns $\{v_k\mid k\in I_{i+1}\setminus\{i+1\}\}$ to vectors on both sides to form an equality of minors $\Delta_{I_{i+1}'}=\lambda \Delta_{I_{i+1}} +\sum_{j=i+2}^{f(i)}a_j\Delta_{I_{i+1}\setminus \{i+1\}\bigcup \{j\}}$.
    
Recall that $f(i+1)\not=i+1$. We can apply Lemma \ref{vanishing lem 1} to conclude that every term in the summation on the RHS is zero because $j\leq f(i)<f(i+1)$. We thus have $\lambda=\frac{\Delta_{I_{i+1}'}(X)}{\Delta_{I_{i+1}}(X)}$.
\end{proof}

\begin{lem}\label{vanishing lem 2}
    Let $\tau\lessdot\tau'$ be an $i$-crossing pair and $f,f'$ the corresponding bounded affine permutations. Then
    
    \begin{enumerate}
        \item $\Delta_{I_{i+1}(f')} $ vanishes on $\chi_f$.
        \item $\Delta_{I_{i+1}'(f')} $ does not vanish anywhere on $\mathring{\chi}_f$.
    \end{enumerate} 
\end{lem}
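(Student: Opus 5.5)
Both parts should follow from explicit Grassmann necklace computations via Lemma \ref{pairing to necklace} and Lemma \ref{technical}.

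\textbf{Part (1).} By Lemma \ref{technical}, $f(i)>f(i+1)$ and $I_{i+1}(f')=I_{i+1}(f)\setminus\{f(i)\}\cup\{f(i+1)\}$. Since $f(i+1)<f(i)$ (taken in the range $(i+1, i+1+2n)$, i.e.\ with respect to $\le_{i+1}$), replacing the element $f(i)$ by the smaller element $f(i+1)$ gives a set strictly smaller than $I_{i+1}(f)$ in the $\le_{i+1}$-lexicographic order. This is the remark after Lemma \ref{technical}. By the definition of the closed positroid variety, $\Delta_J$ vanishes on $\Pi_f$ for every $J<_{i+1}I_{i+1}(f)$. Hence $\Delta_{I_{i+1}(f')}$ vanishes on $\Pi_f$, and therefore on $\chi_f=\Pi_f\cap\chi_n$. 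The only care needed is to check that the cyclic-order comparison is the right one. I would do this directly from Lemma \ref{pairing to necklace}: the two strands $i$, $i+1$ have endpoints $\tau(i),\tau(i+1)$, and non-crossing forces the nesting order, which makes $\tau(i+1)$ reached before $\tau(i)$ when moving clockwise from $i+1$.

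\textbf{Part (2).} First check that $I_{i+1}'(f')$ is defined, i.e.\ that $i+1\in I_{i+1}(f')$. By Remark \ref{r: no i,i-1} this holds iff $\tau'(i+1)\ne i+2$. We have $\tau'(i+1)=\tau(i)$, and $\tau(i)=i+2$ is impossible: the strands $i$ and $i+1$ of $\tau$ do not cross, so strand $(i+1,\tau(i+1))$ would have to sit inside the arc $(i,i+2)$, which is absurd. Next, compute $I_{i+1}'(f')$ via Lemma \ref{pairing to necklace} and compare it with the necklace of $\tau$ at position $i$. The strands of $\tau'$ other than strand $i+1$ are the strands of $\tau$ other than strand $i$, except that strand $(i,\tau(i+1))$ replaces $(i+1,\tau(i+1))$. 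When reading clockwise from $i+1$ versus from $i$, the first-reached endpoints differ only in the index $i$ versus $i+1$. Tracking the shift by $1$, I expect to get
\[ I_{i+1}'(f') = I_i(f) \]
(or possibly $I_{i+2}(f)$, depending on the shifts; I would verify on the $n=2$ example).

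Granting this identity, $\Delta_{I_{i+1}'(f')}=\Delta_{I_j(f)}$ is a Grassmann necklace coordinate of $f$. By the definition of $\Pio_f$ it is nonvanishing on $\Pio_f$, hence on $\Xo_f=\Pio_f\cap\chi_n$.

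The main obstacle is establishing this necklace identity exactly, given the off-by-one conventions: $I_i$ is the set of first-reached endpoints minus $1$, and $f_\tau(i)=\tau(i)-1$. I would handle it by a careful case check over the strands other than strands $i$ and $i+1$, which are unaffected because their first-reached endpoints from $i$ and from $i+1$ coincide unless an endpoint equals $i$ or $i+1$. The two special strands are then treated by hand, using the non-crossing hypothesis on $\tau$.
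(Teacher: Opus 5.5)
\textbf{Part (1)} is correct and is the paper's argument. \textbf{Part (2)} has a genuine gap: it rests on the set identity $I'_{i+1}(f')=I_i(f)$ (or $I_{i+2}(f)$), which is false in general.

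Run Lemma~\ref{pairing to necklace} carefully. Let $S$ be the contribution of the $n-2$ strands not touching $i$ or $i+1$. These strands give the same first-reached endpoints whether you start from $i$ or from $i+1$, and whether you use $\tau$ or $\tau'$. Then, with indices mod $2n$,
\[
I_{i+1}(f)=S\cup\{\tau(i)-1\},\qquad I_{i+1}(f')=S\cup\{\tau(i+1)-1\},\qquad I_i(f)=S\cup\{i\}.
\]
If $\tau(i+1)=i+2$, you do get $I'_{i+1}(f')=I_i(f)$. Otherwise $i+1\in S$, and
\[
I'_{i+1}(f')=(S\setminus\{i+1\})\cup\{i,\tau(i+1)-1\}.
\]
This is neither $I_i(f)$ nor $I_{i+2}(f)=(S\setminus\{i+1\})\cup\{\tau(i)-1,\tau(i+1)-1\}$.

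The paper's Example~\ref{e:embedding} is a concrete counterexample. There $I'_7(\tau')=\{2,6,8\}$, while the necklace of $\tau$ is $(124,246,456,456,256,267,127,128)$. So $\Delta_{268}$ is not a necklace coordinate of $f$. Your proposed $n=2$ check cannot detect this, because for $n=2$ one always has $\tau(i+1)=i+2$. Your preliminary check that $I'_{i+1}(f')$ is defined is fine.

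The missing idea in the paper is to push $X$ into $\mathring{\chi}_{f'}$ using $u_i(a)$. For $X\in\mathring{\chi}_f$ and $a\neq 0$, Proposition~\ref{p:crossing} gives $u_i(a).X\in\mathring{\chi}_{f'}$. Since $i-1,i\notin I_{i+1}(f')\ni i+1$, Lemma~\ref{l:pluckerchange} gives
\[
\Delta_{I_{i+1}(f')}(u_i(a).X)=\Delta_{I_{i+1}(f')}(X)+a\,\Delta_{I'_{i+1}(f')}(X).
\]
The left side is nonzero because it is a necklace coordinate of $f'$. The first term on the right vanishes by part (1). Hence $\Delta_{I'_{i+1}(f')}(X)\neq 0$.

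If you want to stay with necklace coordinates of $f$, the correct tool is a three-term Pl\"ucker relation, not an equality of sets. Take $T=S\setminus\{i+1\}$ and the four indices $i,\ i+1,\ \tau(i+1)-1,\ \tau(i)-1$. This gives
\[
\Delta_{I'_{i+1}(f')}\,\Delta_{I_{i+1}(f)}=\pm\Delta_{I_i(f)}\,\Delta_{I_{i+2}(f)}\pm\Delta_{T\cup\{i,\tau(i)-1\}}\,\Delta_{I_{i+1}(f')}.
\]
The last term vanishes on $\Pi_f$ by your part (1). So on $\mathring{\Pi}_f$, the coordinate $\Delta_{I'_{i+1}(f')}$ is a ratio of necklace coordinates of $f$; in the example this reads $\Delta_{268}\Delta_{127}=\pm\Delta_{267}\Delta_{128}$. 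This route even shows nonvanishing on all of $\mathring{\Pi}_f$, not just on $\mathring{\chi}_f$.
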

\begin{proof}

 For the first claim, by Lemma \ref{technical}, $I_{i+1}(f')=I_{i+1}(f)-\{f(i)\}\bigcup \{f(i+1)\}<_{i+1}I_{i+1}(f)$.
 
   For the second claim, by Remark \ref{r: no i,i-1}, $i,i-1\not\in I_{i+1}(f')$. Moreover, since $\tau'(i+1)\not=i+2$, it's guaranteed that $i+1\in I_{i+1}(f')$. Fix any $a\in \mathbb{C}^*$. By Lemma \ref{l:pluckerchange} case (iii), we have
 $\Delta_{I_{i+1}(f')}(u_i(a).X)=\Delta_{I_{i+1}(f')}(X)+a\Delta_{I'_{i+1}(f')}(X)$. By Proposition \ref{p:crossing}, $u_i(a).X\in \mathring{\chi}_{f'}$. So, the Grassmann necklace of $u_i(a).X$ is given by the Grassmann necklace $\cI(f')$. Thus, we have $0\not=\Delta_{I_{i+1}(f')}(u_i(a).X)=\Delta_{I_{i+1}(f')}(X)+a\Delta_{I'_{i+1}(f')}(X)$. By the first claim, $\Delta_{I_{i+1}(f')}$ vanishes on $\Xo_f$. So the previous inequality becomes $0\not=a\Delta_{I'_{i+1}(f')}(X)$ for $a\in \mathbb{C}^*$. This implies that $\Delta_{I_{i+1}'(f')}(X)$ must not vanish anywhere on $\mathring{\chi}_f$.
\end{proof}
We are now ready to prove the main result of this subsection.
\begin{proof}[Proof of Prop \ref{p:uncrossing adjacent}]
The target $\mathring{\chi}_{f'}$ has complement in $\chi_{f'}$ given by $\sqcup\Xo_g$ where the disjoint union is over all $g<f'$. Hence, the complement equals the union of closed electroid varieties ${\chi}_g$ for $g\lessdot f'$. We conclude that the target $\mathring{\chi}_{f'}$ is dense open in $\chi_{f'}$ and thus is a variety. 

By Proposition \ref{p:crossing}, we can define the morphism 
\begin{align*}
    \psi: \mathring{\chi}_{f}\times\mathbb{C}^* &\longrightarrow \mathring{\chi}_{f'}\\
    (Y,a) &\longmapsto u_i(a).Y
\end{align*}


Consider the open subscheme $U$ of $\mathring{\chi}_{f'}$ given by 
$$U:= \{X \in\mathring{\chi}_{f'}\mid \Delta_{I_{i+1}'(f')}(X)\not=0\}.$$ 
Notice that $b:=\frac{\Delta_{I_{i+1}(f')}(X)}{\Delta_{I'_{i+1}(f')}(X)}$ is a regular function on $U$. Furthermore, $\Delta_{I_{i+1}(f')}$ does not vanish anywhere on the open positroid variety $\mathring{\Pi}_{f'}$, and so it does not vanish anywhere on the open electroid variety $\mathring{\chi}_{f'}$. In particular, $b$ is a no-where vanishing regular function on $U$. By \cite[Proposition 5.25]{Lam2018}, $u_i(-b).X$ is in $\mathring{\chi}_{f}$. 

So we have a well-defined morphism $\phi:U\to \mathring{\chi}_{f}\times\mathbb{C}^*$ given by $$X\mapsto (u_i(-b).X, b).$$

We now show that $\phi$ and $\psi$ are two sided inverses of each other. First, the image of $\phi$ is contained in the domain of $\psi$ by definition. And for any $X\in U$, we have that $\psi\circ\phi(X)=\psi(u_i(-b).X,b)=u_i(b).u_i(-b).X=X$. We showed that $\psi\circ\phi=id$. 

Now we consider $\phi\circ\psi$. We must show that $\image\psi$ is contained in the domain $U$ of $\phi$. That is, we must show that $\Delta_{I_{i+1}'(f')}(u_i(a).X)\not=0$ for any $(X,a)\in \Xo_\tau\times \mathbb{C}^*$. By Remark \ref{r: no i,i-1}, we have that $i,i-1\not\in I_{i+1}(f')$ and $i+1\in I_{i+1}(f')$. Then, by Lemma \ref{l:pluckerchange}, we have
\begin{enumerate}
    \item $\Delta_{I_{i+1}(f')}(u_i(a).X)=\Delta_{I_{i+1}(f')}(X)+a\Delta_{I'_{i+1}(f')}(X)$
    \item $\Delta_{I_{i+1}'(f')}(u_i(a).X)=\Delta_{I_{i+1}'(f')}(X).$
\end{enumerate}
By Lemma \ref{vanishing lem 2}, we further have 
\begin{enumerate}
    \item $\Delta_{I_{i+1}(f')}(u_i(a).X)=a\Delta_{I'_{i+1}(f')}(X)$
    \item $\Delta_{I_{i+1}'(f')}(u_i(a).X)=\Delta_{I_{i+1}'(f')}(X)\not=0.$
\end{enumerate}
Thus, $\image\psi$ is contained in the domain $U$ of $\phi$. Now, $\phi\circ\psi(X,a)=\phi(u_i(a).X).$ We compute in the definition of $\phi$ that $b= \frac{\Delta_{I_{i+1}(f')}(u_i(a).X)}{\Delta_{I_{i+1}'(f')}(u_i(a).X)}=\frac{a\Delta_{I'_{i+1}(f')}(X)}{\Delta_{I_{i+1}'(f')}(X)}=a$. Since $b=a$, then $\phi$ maps $u_i(a).X$ to $(u_i(-a).u_i(a).X,a)=(X,a)$.

This completes the proof that $\Xo_{f}\times\mathbb{C}^* \longrightarrow \Xo_{f'}$ is an open embedding onto the open subscheme of $\mathring{\chi}_{f'}$ given by the nonvanishing of $\Delta_{I_{i+1}'(f')}$.
\end{proof}

\begin{eg}

Consider the $i$-crossing pair depicted in Figure \ref{f:embedding} $$\tau = (1,7),(2,6),(3,4),(5,8), \qquad \tau'=(1,6),(2,7),(3,4),(5,8).$$ Here $i=6$, $I_{i+1}(\tau)=712$, $I_{i+1}(\tau')=782$, and $I_{i+1}'(\tau')=682$. Since $u_6(a)$ acts on $X\in\Xo_\tau$ by adding $a$ times column 6 to columns 5 and 7, 
$$\Delta_{782}(u_6(a).X) = \Delta_{782}(X) + a\Delta_{682}(X).$$ Since $782\in\cI_{\tau'}$ and $u_6(a).X\in\Xo_{\tau'}$, the LHS is nonzero. We have that $782 <_7 712=I_{7}(\tau)$ and $X\in \Xo_\tau$, so $\Delta_{782}(X)=0$. Thus, $\Delta_{682}(X)$ is nonzero. Since the action of $u_6(a)$ doesn't change columns $6,8,$ or $2$, we have $\Delta_{682}(u_i(a).X)=\Delta_{682}(X)\not=0$ as desired.
\label{e:embedding}
\end{eg}

\begin{figure}
    \centering
    \begin{tikzpicture}
        \coordinate (A) at (-3,0);
        \coordinate (B) at (3,0);
        
        \draw (A) circle (1);
        \draw (B) circle (1);

        \draw [shift={(-3,0)},thick, red] (0,1) to[out=270,in=0] (-1,0);
        \draw [shift={(-3,0)},thick] (.707,.707) to[out=225,in=45] (-.707,-.707);
        \draw [shift={(-3,0)},thick] (1,0) to[out=180,in=135] (.707,-.707);
        \draw [shift={(-3,0)},thick] (0,-1) to[out=90,in=305] (-.707,.707);

         \draw (A) ++(0,1) node [above,scale=.8,red] {1};
         \draw (A) ++(1,0) node [right,scale=.8] {3};
         \draw (A) ++(0,-1) node [below,scale=.8] {5};
         \draw (A) ++(-1,0) node [left,scale=.8,red] {7};
         \draw (A) ++(.707,.707) node [above,scale=.8] {2};
         \draw (A) ++(.707,-.707) node [below,scale=.8] {4};
         \draw (A) ++(-.707,-.707) node [below,scale=.8] {6};
         \draw (A) ++(-.707,.707) node [above,scale=.8] {8};

         \fill (A) ++(-.707,.707) circle (2pt);
         \fill (A) ++(1,0) circle (2pt);
         \fill (A) ++(.707,.707) circle (2pt);

         \draw [shift={(3,0)},thick] (0,1) to[out=270,in=45] (-.707,-.707);
        \draw [shift={(3,0)},thick,red] (.707,.707) to[out=225,in=10] (-1,0);
        \draw [shift={(3,0)},thick] (1,0) to[out=180,in=135] (.707,-.707);
        \draw [shift={(3,0)},thick] (0,-1) to[out=90,in=305] (-.707,.707);

         \draw (B) ++(0,1) node [above,scale=.8] {1};
         \draw (B) ++(1,0) node [right,scale=.8] {3};
         \draw (B) ++(0,-1) node [below,scale=.8] {5};
         \draw (B) ++(-1,0) node [left,scale=.8,red] {7};
         \draw (B) ++(.707,.707) node [above,scale=.8,red] {2};
         \draw (B) ++(.707,-.707) node [below,scale=.8] {4};
         \draw (B) ++(-.707,-.707) node [below,scale=.8] {6};
         \draw (B) ++(-.707,.707) node [above,scale=.8] {8};

         \fill (B) ++(-.707,.707) circle (2pt);
         \fill (B) ++(1,0) circle (2pt);
         \fill (B) ++(0,1) circle (2pt);

         \draw (-4.5,1) node {\fbox{$\tau$}};
         \draw (1.5,1) node {\fbox{$\tau'$}};

    \end{tikzpicture}
    \caption{The pairings $\tau = (1,7),(2,6),(3,4),(5,8)$ and $\tau'=(1,6),(2,7),(3,4),(5,8)$ with subsets $I_{7}(\tau)+1=823$ and $I_{7}(\tau')+1=813$ indicated by the dots.}
    \label{f:embedding}
\end{figure}

\subsection{Proof of Irreducibility}

\begin{proof}[Proof of Theorem \ref{Main1}.1 (Irreducible, Smooth, Dimension Count).]

We proceed by induction on the codimension $d:=\binom{n}{2} - c(\tau)$. When $d=0$, the only pairing is $\tau_\text{top} = (1,1+n), (2,2+n), \dots ,(n,2n)$. Per \cite{CGS2021}, $\chi$ is abstractly isomorphic to the Lagrangian Grassmannian. As the standard Lagrangian Grassmannian is a homogeneous space for the connected algebraic group $Sp(2n,\mathbb{C})$, it irreducible, and so is $\chi$. Since $\Xo_{\tau_\text{top}}$ is a Zariski open subset of $\chi$, the variety $\Xo_{\tau_\text{top}}$ is also irreducible.

For $\Xo_\tau$ with $d>0$, there exists some strands $i$ and $i+1$ which do not cross in $\tau$. To see this, take noncrossing strands $i<j$ with $j-i$ minimal. Any strand $k$ with $i<k<j$ cannot cross both $i$ and $j$, thereby contradicting the minimality of $j-i$. Thus, there exists $\tau'$ such that $\tau\lessdot\tau'$ is an $i$-crossing pair and $c(\tau')=c(\tau)+1$. By Proposition \ref{p:uncrossing adjacent}, $\psi:\Xo_{\tau}\times\mathbb{C}^* \hookrightarrow \Xo_{\tau'}$ is an open embedding onto an open subscheme of $\mathring{\chi}_{\tau'}$. By the inductive hypothesis, $\Xo_{\tau'}$ is irreducible and smooth of dimension $c(\tau')$. Thus, the isomorphic image of $\Xo_{\tau}\times\mathbb{C}^*$ must be dense in $\Xo_{\tau'}$, and so it is irreducible of the same dimension. This implies that $\Xo_\tau$ is irreducible and smooth of dimension $c(\tau)$. 
\end{proof}


\section{Frobenius Splitting and its Consequences}

\subsection{A quick overview of Frobenius splitting}
We provide a quick overview of the theory of Frobenius splitting. 
The classic text here is~\cite{BrionKumar}; our presentation is also closely based on~\cite{Knutson2009}.
Let $k$ be a field of characteristic $p$, and let $X$ be a scheme over $k$. 
\begin{defn}
A \emph{Frobenius splitting} on $X$ is a map of sheaves $\phi : \cO_X \to \cO_X$ obeying the relations:
\[ \phi(u+v) = \phi(u)+\phi(v), \quad \phi(a^p b) = a \phi(b),\ \quad \phi(1) = 1.\]
If $\phi$ only obeys the first two relations, it is called a near splitting.    
\end{defn}
Note that the set of all near-splittings is a $k$-vector space, and the set of splittings is an affine linear $\mathbb{F}_p$-subspace of that vector space.

Most schemes do not support Frobenius splittings.
If $X$ supports a Frobenius splitting, then $X$ is necessarily reduced~\cite[Proposition 1.2.1]{BrionKumar} and weakly normal (also called semi-normal)~\cite[Proposition~1.2.5]{BrionKumar}. 

\begin{defn}
Given a scheme $X$ with a Frobenius splitting $\phi$, an ideal sheaf $\cI \subseteq \cO_X$ is called \emph{compatibly split} if $\phi(\cI) \subseteq \cI$. 
\end{defn}
If $\cI$ is compatibly split, then $\phi$ descends to an splitting on the closed subscheme $V(\cI)$ corresponding to $\cI$, and so $V(\cI)$ is reduced, and is therefore defined by its support. 
Thus, given a closed subvariety (not necessarily irreducible) $Y$ of $X$, we say that $Y$ is compatibly split if its radical ideal sheaf is compatibly split.
There are a number of operations which let us produce new compatibly split subschemes from old ones.

\begin{prop}\cite[Proposition 1.2.1]{BrionKumar} \label{IntersectDecompose}
    Let $X$ be a scheme with a Frobenius splitting $\phi$. Let $Y$ and $Z$ be compatibly split subvarieties of $X$. 
    Then $Y \cup Z$, and the scheme theoretic intersection $Y \cap Z$, are compatibly split; in particular, the scheme theoretic intersection $Y \cap Z$ is reduced.
    Also, if $Y \subset X$ is compatibly split, and $W$ is an irreducible component of $Y$, then $W$ is compatibly split.
\end{prop}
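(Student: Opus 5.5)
We want Proposition \ref{IntersectDecompose}, the standard fact from \cite{BrionKumar}. Throughout, write $I_Y, I_Z$ for the ideal sheaves of $Y$ and $Z$, which are compatibly split, so $\phi(I_Y)\subseteq I_Y$ and $\phi(I_Z)\subseteq I_Z$. All statements are local, so we may work on an affine open $\operatorname{Spec} R$ with $\phi:R\to R$ additive, satisfying $\phi(a^pb)=a\phi(b)$ and $\phi(1)=1$.

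\emph{Unions and intersections.} The scheme-theoretic intersection $Y\cap Z$ has ideal $I_Y+I_Z$, and additivity gives $\phi(I_Y+I_Z)\subseteq I_Y+I_Z$. The union $Y\cup Z$ has ideal $I_Y\cap I_Z$, and $\phi(I_Y\cap I_Z)\subseteq \phi(I_Y)\cap\phi(I_Z)\subseteq I_Y\cap I_Z$. Reducedness of the intersection follows from the general principle that a compatibly split ideal $J$ is radical. If $x^p\in J$, then $x=x\,\phi(1)=\phi(x^p\cdot 1)\in\phi(J)\subseteq J$. If $x^m\in J$, choose $p^e\ge m$; then $x^{p^e}\in J$, and iterating the previous step gives $x\in J$. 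Hence $R/(I_Y+I_Z)$ is reduced, and $\phi$ descends to a splitting of it.

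\emph{Irreducible components.} This is the main step. Let $I=I_Y$, which is radical, and write $I=P_1\cap\cdots\cap P_r$ as an irredundant intersection of minimal primes, with $W=V(P_1)$. First show that ideal quotients of compatibly split ideals are compatibly split. If $I$ is compatibly split and $f\in R$, take $x\in(I:f)$. Then $f\phi(x)=\phi(f^px)=\phi(f^{p-1}\cdot fx)$, which lies in $\phi(I)\subseteq I$ because $fx\in I$. So $\phi(x)\in(I:f)$.

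Now choose $f\in P_2\cap\cdots\cap P_r$ with $f\notin P_1$; this is possible by prime avoidance and irredundancy. Then $(I:f)=P_1$. Indeed, if $xf\in I\subseteq P_1$ then $x\in P_1$; conversely, if $x\in P_1$ then $xf\in P_1\cap\cdots\cap P_r=I$. Thus $P_1$ is compatibly split.

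Globally, $f$ exists on each affine open meeting $W$. The ideal sheaf of $W$ is determined locally, so the local statements glue. On opens missing $W$ its ideal is the unit ideal, which is trivially split.
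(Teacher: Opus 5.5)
Your proof is correct. The paper gives no argument of its own and simply cites \cite[Proposition 1.2.1]{BrionKumar}. Your treatment of unions, intersections and reducedness (via $x=\phi(x^p)$ and iterating from $x^{p^e}\in J$) is the standard one. For irreducible components you take a differently packaged route. Brion and Kumar use the lemma that a reduced closed subscheme $Y$ is compatibly split as soon as $Y\cap U$ is compatibly split in $U$ for some open $U$ with $Y\cap U$ dense in $Y$; they apply it with $U$ the complement of the other components. You instead show that colon ideals $(I:f)$ are compatibly split and write the minimal prime $P_1$ as such a colon ideal. The two arguments are the algebraic and geometric forms of one idea. Since $I$ is radical, $(I:f)=IR_f\cap R$ is the ideal of the closure of $Y\cap D(f)$, and $D(f)$ is an open set meeting $W$ on which $Y$ and $W$ agree. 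Your version is purely ring-theoretic, and the colon step works verbatim with any ideal $J$ in place of $f$. The price is choosing $f$ chart by chart and gluing, which the density lemma avoids. Both arguments need only finitely many components locally, which holds in the finite-type setting of the paper. One small correction of terminology: the existence of $f\in (P_2\cap\cdots\cap P_r)\setminus P_1$ is not prime avoidance. It is the fact that a prime containing a finite intersection (or product) of ideals contains one of them, combined with the minimality of the $P_j$.
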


This raises the issue of how to find splittings in the first place. 
The following result is from~\cite[Section 1.3]{BrionKumar}:
\begin{thm}
    Let $X$ be smooth over $k$ and let $\omega_X$ be the canonical bundle. Then the vector space of near splittings on $X$ is naturally identified with $H^0(X, \omega_X^{1-p})$.
\end{thm}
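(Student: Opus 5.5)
The plan is to reinterpret near splittings as $\cO_X$-linear maps out of the Frobenius pushforward and then apply duality for the finite flat morphism $F$. Let $F:X\to X$ be the absolute Frobenius, and assume $k$ perfect for simplicity; in general one uses the relative Frobenius, with the same argument. The relations $\phi(u+v)=\phi(u)+\phi(v)$ and $\phi(a^pb)=a\phi(b)$ say exactly that $\phi$ is an $\cO_X$-module homomorphism $F_*\cO_X\to\cO_X$. So the space of near splittings is
\[
H^0\bigl(X,\mathcal{H}om_{\cO_X}(F_*\cO_X,\cO_X)\bigr).
\]
It therefore suffices to produce a natural isomorphism of sheaves
\[
\mathcal{H}om_{\cO_X}(F_*\cO_X,\cO_X)\cong F_*\bigl(\omega_X^{1-p}\bigr)
\]
and then take global sections.

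\textbf{Step 1: local structure.} Since $X$ is smooth, near any point there are étale coordinates $x_1,\dots,x_d$. In these coordinates $F_*\cO_X$ is a locally free $\cO_X$-module with basis the monomials $x^{a}$ for $0\le a_i\le p-1$; this is Kunz's theorem, or a direct check in the étale-local model. Consequently $F$ is finite and flat. Let $\mathrm{Tr}$ be the map sending $x^{a}$ to $1$ if $a=(p-1,\dots,p-1)$ and to $0$ otherwise, extended $\cO_X$-linearly. I will check that every $\cO_X$-linear map $F_*\cO_X\to\cO_X$ has the form $b\mapsto \mathrm{Tr}(cb)$ for a unique local section $c$. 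Hence $\mathcal{H}om(F_*\cO_X,\cO_X)$ is an invertible $F_*\cO_X$-module, locally generated by $\mathrm{Tr}$.

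\textbf{Step 2: identify the line bundle.} Make the construction coordinate-free using the Cartier operator $C:F_*\omega_X\to\omega_X$. In coordinates it sends $x^{p-1}\,dx_1\wedge\cdots\wedge dx_d$ to $dx_1\wedge\cdots\wedge dx_d$ and kills the other basis monomials times $dx$. The Cartier isomorphism shows $C$ is intrinsic, i.e.\ independent of the choice of coordinates. Given a local section $s$ of $\omega_X^{1-p}$, define
\[
\phi_s(b)=C(b\cdot s\cdot \eta^{p})/\eta,
\]
where $\eta$ is any local generator of $\omega_X$. The key computations are the following.
\begin{enumerate}
\item $\phi_s$ is independent of $\eta$: replacing $\eta$ by $u\eta$ multiplies the numerator by $u^p$. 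Since $C$ is $p^{-1}$-linear, $u$ comes out and cancels the denominator.
\item $s\mapsto\phi_s$ is $F_*\cO_X$-linear.
\item In coordinates, taking $s=(dx)^{1-p}$ gives $\phi_s=\mathrm{Tr}$.
\end{enumerate}
Combined with Step 1, this gives the sheaf isomorphism above, and taking $H^0$ finishes the proof.

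Alternatively, Step 2 can be phrased via Grothendieck duality for the finite flat map $F$: one has $F_*\mathcal{H}om(\cO_X,F^!\cO_X)\cong\mathcal{H}om(F_*\cO_X,\cO_X)$, together with
\[
F^!\cO_X\cong\omega_X\otimes F^*\omega_X^{-1}=\omega_X^{1-p}.
\]
The last isomorphism holds because $F^*$ of a line bundle is its $p$-th power.

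\textbf{Main obstacle.} I expect the hardest step to be the coordinate-independence in Step 2, namely that the locally defined generator $\mathrm{Tr}$ glues to a global identification with $\omega_X^{1-p}$. The cleanest route is to lean on the intrinsic nature of the Cartier operator, or equivalently on the duality formula for $F^!$, instead of checking transition functions by hand. If a direct check is needed, I would verify by an explicit Jacobian computation that, under a change of coordinates, $\mathrm{Tr}$ rescales by $\det(\partial x/\partial y)^{1-p}$.
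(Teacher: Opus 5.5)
Your proposal is correct and follows the standard argument. The paper gives no proof of its own; it simply cites \cite[Section 1.3]{BrionKumar}. There, near splittings are identified with $\mathcal{H}om_{\cO_X}(F_*\cO_X,\cO_X)$, and this sheaf is identified with $F_*(\omega_X^{1-p})$ via the Cartier operator (equivalently, via duality for the finite flat map $F$ with $F^!\cO_X\cong\omega_X^{1-p}$). That is exactly your Steps 1 and 2, including the formula $\phi_s(b)=C(bs\eta^p)/\eta$.

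The one subtlety, already implicit in your Step 2(2), concerns the vector-space structure. The identification is $F_*\cO_X$-linear, so it is $k$-linear when $k$ acts on near splittings by precomposition. It is $p^{-1}$-semilinear ($\phi_{cs}=c^{1/p}\phi_s$) when $k$ acts by post-composition.
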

In particular, given an anti-canonical section $\sigma \in H^0(X, \omega_X^{-1})$, the power $\sigma^{p-1}$ defines a near-splitting $\phi$ on $X$. 
We want to know whether this near splitting is a splitting, in other words, whether or not $\phi(1)=1$. 
Since $\phi$ is a map of sheaves, it will be enough to verify that $\phi(1)=1$ on any dense open subset of $X$.
In our application, we will in particular work on in a dense open affine which is isomorphic to $\mathbb{A}^n$.

\begin{thm} \cite[Theorem 2]{Knutson2009} \label{LeadingTermCriterion}
  Let $f$ be a polynomial of degree $n$ in $k[x_1, x_2, \ldots, x_n]$. Suppose that we have a term order $\prec$ on $k[x_1, x_2, \ldots, x_n]$ such that the leading term of $f$ is a nonzero $\mathbb{F}_p$ multiple of $x_1 x_2 \cdots x_n$. Then, $(f (dx_1 \wedge dx_2 \wedge \cdots \wedge dx_n)^{-1})^{p-1}$ induces a splitting on $\text{Spec}\ k[x_1, x_2, \ldots, x_n]$. Moreover, the zero locus of $f$ is compatibly split for this splitting.
\end{thm}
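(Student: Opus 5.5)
The plan is to write the near-splitting explicitly in coordinates and check directly that $\phi(1)=1$ and $\phi((f))\subseteq (f)$.

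\textbf{Step 1: coordinate form of near-splittings.} On $\mathbb{A}^n=\text{Spec}\ k[x_1,\ldots,x_n]$, the canonical bundle is trivialized by $dx_1\wedge\cdots\wedge dx_n$. Under the identification of near-splittings with $H^0(X,\omega_X^{1-p})$ quoted above, the section $(dx_1\wedge\cdots\wedge dx_n)^{1-p}$ corresponds to the standard trace map $\mathrm{Tr}$. This map is $p^{-1}$-linear and acts on monomials by
\[
\mathrm{Tr}(x^{a}) \;=\; \begin{cases} x^{(a+\mathbf{1})/p-\mathbf{1}} & \text{if } a_i\equiv p-1 \pmod p \text{ for all } i,\\ 0 & \text{otherwise,}\end{cases}
\]
extended additively and $p^{-1}$-semilinearly in the coefficients. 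A general section $g\,(dx)^{1-p}$ then corresponds to the near-splitting $h\mapsto \mathrm{Tr}(g h)$. I would cite \cite[Section 1.3]{BrionKumar} for this normalization rather than rederive it. So the near-splitting in question is $\phi(h)=\mathrm{Tr}(f^{p-1}h)$.

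\textbf{Step 2: $\phi(1)=1$.} This is the main point. Write the leading term of $f$ as $c\,x_1\cdots x_n$ with $c\in\mathbb{F}_p^*$. Term orders are multiplicative, so the leading term of $f^{p-1}$ is $c^{p-1}(x_1\cdots x_n)^{p-1}$, and $c^{p-1}=1$ by Fermat. Any other monomial $x^a$ appearing in $f^{p-1}$ is distinct from $(x_1\cdots x_n)^{p-1}$, because a leading monomial appears exactly once. Such an $x^a$ also has total degree at most $\deg f^{p-1}\le n(p-1)$. For $\mathrm{Tr}(x^a)\neq 0$ we would need $a_i\ge p-1$ and $a_i\equiv p-1 \pmod p$ for every $i$. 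Combined with $\sum a_i\le n(p-1)$, this forces $a_i=p-1$ for all $i$, which contradicts distinctness. Hence $\mathrm{Tr}(f^{p-1})=\mathrm{Tr}\big((x_1\cdots x_n)^{p-1}\big)=1$. Since $\phi$ is a map of sheaves on an affine space, this gives $\phi(1)=1$, so $\phi$ is a splitting.

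The step needing care is the degree bookkeeping: the hypothesis $\deg f=n$ is exactly what prevents higher monomials such as $x_1^{2p-1}\cdots$ from contributing. The term order is used only to guarantee that the monomial $(x_1\cdots x_n)^{p-1}$ occurs in $f^{p-1}$ with coefficient exactly $1$, with no cancellation.

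\textbf{Step 3: compatibility.} For any $h$,
\[
\phi(fh)=\mathrm{Tr}(f^{p}h)=f\,\mathrm{Tr}(h)\in (f),
\]
using $p^{-1}$-linearity. So $\phi((f))\subseteq (f)$, and the ideal $(f)$ is compatibly split. As noted after the definition of compatible splitting, this means $V(f)$ is reduced and compatibly split.
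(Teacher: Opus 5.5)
Your proposal is correct and is essentially the standard argument from the cited source; the paper itself quotes Knutson's Theorem~2 without proof. As in that source, multiplicativity of initial terms makes the coefficient of $(x_1\cdots x_n)^{p-1}$ in $f^{p-1}$ equal to $c^{p-1}=1$, the bound $\deg f^{p-1}\le n(p-1)$ excludes every other monomial whose exponents are all $\equiv p-1 \pmod p$, and $\mathrm{Tr}(f^{p}h)=f\,\mathrm{Tr}(h)$ gives compatibility of $(f)$, with the only implicit assumption being that $k$ is perfect, as in the Brion--Kumar setting the paper relies on.
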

So, if $\sigma \in H^0(X, \omega_X^{-1})$, and if we can find a dense open affine chart $U$ in $X$ isomorphic to $\text{Spec}\ k[x_1, x_2, \ldots, x_n]$ such that $\sigma|_U$ is of the form $f (dx_1 \wedge dx_2 \wedge \cdots \wedge dx_n)^{-1}$ for $f$ as in Theorem~\ref{LeadingTermCriterion}, then $\sigma^{p-1}$ induces a splitting on $X$.

In particular, if $X$ is projective and smooth, then we can specify a section of $\omega_X^{-1}$ by specifying the anticanonical divisor on which $f$ vanishes. Thus, our recipe for building Frobenius splittings in progress is to
\begin{enumerate}
    \item Specify an anticanonical section $D$ in $X$.
    \item Find dense open affine subset of $X$, isomorphic to $\AA^n$, where $D$ is given by the vanishing of a polynomial $f$ as in Theorem~\ref{LeadingTermCriterion}.
\end{enumerate}
We then get a splitting, where the divisor $D$ is compatibly split, and we can try to use Proposition~\ref{IntersectDecompose} to find more compatibly split subvarieties. We now carry out this program for the electroid space.

In this section, it is convenient to embed the electroid space into the Grassmannian $\Gr(n+1, 2n)$.


\subsection{A Chart Computation}

\begin{thm}[\protect{\cite[Theorem 1.6]{CGS2021}}]
The electroid space $\chi_n\subset \Gr(n+1,2n)$ contains two affine opens $\nU$ and $\cU$, both isomorphic to $\bA^{\binom{n}{2}}$. 

Points on $\nU\subset \Gr(n+1,2n)$ are given by $[XD]$, where
\begin{equation}
    X=\begin{bmatrix}
    0&1 &0&1&\cdots&0&1\\
    1&S_{11}&0&S_{12}&\cdots&0&S_{1n}\\
    0&S_{21}&1&S_{22}&\cdots&0&S_{2n}\\
    \vdots&\vdots&\vdots&\vdots&\cdots&\vdots&\vdots\\
    0&S_{n1}&0&S_{n2}&\cdots&1&S_{nn}\\
\end{bmatrix}
\end{equation}  
The isomorphism $\nU\simeq \bA^{\binom{n}{2}}$ is given explicitly by taking $L_{ji}=S_{i,{j-1}}-S_{ij}$, where the $L$'s form the response matrix of the electric network.

Points on $\cU\subset \Gr(n+1,2n)$ are given by $[\Tilde{X}D]$, where
\begin{equation}
    \Tilde{X}=\begin{bmatrix}
    1 &0&1&0&\cdots&1&0\\
    T_{11}&1&T_{12}&0&\cdots&T_{1n}&0\\
    T_{21}&0&T_{22}&1&\cdots&T_{2n}&0\\
    \vdots&\vdots&\vdots&\vdots&\cdots&\vdots&\vdots\\
    T_{n1}&0&T_{n2}&0&\cdots&T_{nn}&1\\
\end{bmatrix}
\end{equation}  
The isomorphism $\cU\simeq \bA^{\binom{n}{2}}$ is given explicitly by taking $L_{ji}^*=T_{i,{j+1}}-T_{ij}$, where the $L$'s form the response matrix of the dual of the cactus network.
\end{thm}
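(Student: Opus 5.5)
The plan is to work entirely inside the description $\chi_n=\IG^{\Omega}(n+1,2n)$ (after right multiplication by $D$) from Theorem~\ref{IsotropicSummary}, and to realize each of $\nU$ and $\cU$ as the intersection of $\chi_n$ with a standard affine chart of $\Gr(n+1,2n)$. The first observation is that the radical of $\Omega$ is two-dimensional, spanned by $(1,0,1,0,\dots,1,0)$ and $(0,1,0,1,\dots,0,1)$: indeed $\Omega(x,e_j)=x_{j-1}-x_{j+1}$, so $x$ is in the radical iff it is constant on odd and on even indices. Every maximal isotropic subspace contains the radical, so every point of $\IG^{\Omega}(n+1,2n)$ contains both radical vectors. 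This explains the forced first row $(0,1,0,1,\dots)$ of $X$ and $(1,0,1,0,\dots)$ of $\Tilde X$.

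For $\nU$, I would define it as the open subset of $\chi_n$ where the Plücker coordinate $\Delta_{\{1,3,\dots,2n-1,2n\}}$ (after the sign change $D$) is nonzero. On the corresponding affine chart of $\Gr(n+1,2n)$, each point has a unique row-reduced representative with an identity block in columns $1,3,\dots,2n-1,2n$. Using the radical vector $(0,1,\dots,0,1)$, which lies in every point of $\chi_n$, I would perform a change of basis to bring the representative into the displayed form $X$ with $n^2$ entries $S_{ij}$. The one-to-one correspondence between these two normal forms on the chart is a routine linear-algebra check.

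Next I impose isotropy. Write $r_0=(0,1,\dots,0,1)$ and $r_i=e_{2i-1}+\sum_j S_{ij}e_{2j}$. Then $\Omega(r_0,\cdot)=0$ automatically, and $\Omega(e_{2j},e_{2l})=0$ because even indices are never adjacent. Hence
\[
\Omega(r_i,r_k)=\Omega(e_{2i-1},e_{2k-1})+\Big(S_{k,i-1}-S_{k,i}\Big)-\Big(S_{i,k-1}-S_{i,k}\Big)
\]
up to sign conventions and cyclic indexing. These conditions are \emph{linear} in the $S_{ij}$. Setting $L_{ki}=S_{k,i-1}-S_{k,i}$, they say that $L$ is symmetric, with the diagonal determined; the row-sum-zero property comes for free because the telescoping sum $\sum_i (S_{k,i-1}-S_{k,i})=0$. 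The scheme $\nU$ is therefore a linear subspace of $\bA^{n^2}$. To pin down its dimension, I would show that $S\mapsto L$ is surjective onto symmetric matrices with zero row sums, which form $\bA^{\binom n2}$, with kernel the $n$-dimensional space of $S$ having constant rows. That kernel is exactly the freedom of adding multiples of $r_0$ to each $r_i$, which the chosen normalization fixes, for instance by the pivot in column $2n$. So $\nU\simeq\bA^{\binom n2}$ via the $L_{ji}$. Since $\chi_n$ is defined scheme-theoretically by these isotropy equations (Theorem~\ref{IsotropicSummary} together with Theorem~\ref{two notions of electroid space}), this is an isomorphism of schemes. Identifying $L$ with the response matrix then amounts to checking on honest electrical networks that the row span of $[I\ |\ L]$ interleaved with $r_0$ is the point $i(\Gamma)$, which is the construction in \cite{CGS2021}.

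For $\cU$, I would not redo the computation. Instead I would apply the cyclic shift of Subsection~\ref{cyclic symmetry}, which preserves $\chi_n$, interchanges odd and even columns, sends the chart $\nU$ to the chart defined by the shifted Plücker coordinate, and sends $X$ to the form $\Tilde X$. Since the cyclic shift realizes planar duality of cactus networks, the coordinates $L^*_{ji}=T_{i,j+1}-T_{ij}$ are the response matrix of the dual network. I expect the main obstacle to be bookkeeping rather than ideas: tracking the signs from $D$ and the cyclic sign $(-1)^k$, and verifying that the normalization of $X$ exactly matches a single Plücker-nonvanishing chart, so that $\nU$ is genuinely open and the map to $\bA^{\binom n2}$ is an isomorphism rather than just a bijection.
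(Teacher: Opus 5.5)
The paper does not prove this statement; it is quoted from \cite{CGS2021}. Your argument therefore has to stand on its own, and its geometric core is correct.

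\textbf{What works.}
\begin{enumerate}
\item Every isotropic $(n+1)$-plane is maximal, so it contains the radical $\langle (1,0,\dots,1,0),(0,1,\dots,0,1)\rangle$. Hence on the chart $\Delta_{\{1,3,\dots,2n-1,2n\}}\neq 0$ the reduced row echelon form is exactly $X$ with $S_{in}=0$.
\item $\Omega$ vanishes on any two odd and on any two even basis vectors. So the isotropy equations in this chart are linear, namely $S_{ki}-S_{k,i-1}+S_{i,k-1}-S_{ik}=0$. This is symmetry of $L$, and zero row sums are automatic by telescoping.
\item The cyclic difference operator on each row of $S$ has kernel the constants. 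So $S\mapsto L$ identifies the chart with the $\binom{n}{2}$-dimensional space of symmetric zero-row-sum matrices.
\item For the scheme-theoretic claim, the clean justification is this: $\chi_n$ is reduced (Theorem~\ref{two notions of electroid space}), and the isotropy locus in the chart is a linear space. So the set-theoretic equality of Theorem~\ref{IsotropicSummary} suffices.
\item The cyclic-shift route to $\cU$ works, provided the shift carries the sign $(-1)^{k-1}=(-1)^n$, which is the convention that actually preserves $\Gr(k,m)_{\geq 0}$. Take $d_{2j-1}=d_{2j}=(-1)^{j-1}$ in $D$. Then the shift conjugated by $D$ moves column $m$ of $X$ to position $m+1$ with sign $(-1)^{m+1}$, including the wrap-around $m=2n$. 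After negating the top row, $[XD]\mapsto[\tilde{X}D]$ with $T_{ij}=-S_{i,j-1}$. Hence $L^*_{ji}=T_{i,j+1}-T_{ij}=L_{ji}$, so the bookkeeping closes up as you expected.
\end{enumerate}

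\textbf{The gap.} You have not established the clause that the $L_{ij}$ are the response matrix of the network, nor, on $\cU$, that they are the response matrix of the dual network.
\begin{itemize}
\item In this paper $i$ is defined through grove coordinates and the concordance matrix $H$ (Theorem~\ref{embedding electroid cell in Grassmannian}), followed by Grassmann duality. So for a planar network $\Gamma$ with response matrix $L$ you must show that $\Delta_I(XD)$ is proportional to $\sum_{\sigma\parallel[2n]\setminus I}L_\sigma(\Gamma)$ for every $I\in\binom{[2n]}{n+1}$.
\item You defer this step to \cite{CGS2021}, which is the very result being proved.
\item The check you describe would fail as stated. The matrix with $L$ itself in the even columns, rather than its cyclic partial sums $S$, is not isotropic: its rows satisfy $\Omega(r'_i,r'_k)=L_{i,k-1}-L_{k,i-1}$, which is nonzero in general.
\end{itemize}

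\textbf{What is needed instead.}
\begin{enumerate}
\item Express the Pl\"ucker coordinates of $X$ in terms of minors of $L$. Expand along the odd columns, then take successive column differences, as in the proof of Proposition~\ref{prop:sqrfree on not shorted}.
\item Match these with grove sums using the grove/connection formulas for response-matrix minors (Curtis--Ingerman--Morrow \cite{CIM}, Kenyon--Wilson). This is the real content behind Lam's embedding \cite{Lam2018}.
\item For $\cU$, additionally use the fact from \cite{CGS2021} that the cyclic shift realizes planar duality of cactus networks. The dual's boundary labelling must be fixed so that $L^*(C(Y))=L(Y)$ reads as ``response matrix of the dual.''
\end{enumerate}
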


Let $\mathcal{C}$ be the set of electrical affine permutations $f$ arising from a medial pairing with exactly $\binom{n}{2}-1$ crossings. For each $f\in \mathcal{C}$, $\Pi_f$ is a positroid variety of codimension $2$ in $\Gr(n+1,2n)$ given by the vanishing of two Pl\"{u}cker coordinates indexed by the cyclic intervals $[i,i+1, \dots ,i+n]$ and $[i+n, i+n+1, \dots, i+2n]$ of size $n+1$. 

\begin{lem}
    On $\Xn$, we have the equality of Pl\"ucker coordinates \[\Delta_{[i,i+1, \dots ,i+n]} = \Delta_{[i+n, i+n+1, \dots, i+2n]}.\]
\end{lem}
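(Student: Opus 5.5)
The plan is to reduce the identity to a statement about grove coordinates, using Theorem~\ref{two notions of electroid space}. Under the duality of Section~\ref{Grassmannian Duality}, the coordinate $\Delta_J$ on $\chi_n\subset\Gr(n+1,2n)$ corresponds to $\Delta_{[2n]\setminus J}$ on $\chi_n\subset \Gr(n-1,2n)$. The two intervals $[i,i+n]$ and $[i+n,i+2n]$ have complements $A=[i+n+1,i+2n-1]$ and $B=[i+1,i+n-1]$, each a cyclic interval of $n-1$ consecutive elements. On $\chi_n$, which is the linear slice in $\Gr(n-1,2n)$, we have the identity of linear forms
\[
\Delta_I=\sum_{\sigma\parallel I}L_\sigma ,
\]
valid in the homogeneous coordinate ring because $\chi_n$ is cut out by these linear relations. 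It therefore suffices to show that $\{\sigma\in NC_n : \sigma\parallel A\}=\{\sigma\in NC_n:\sigma\parallel B\}$ as sets of noncrossing partitions. No geometry is needed beyond this, since the equality then holds as an identity of linear forms on the slice.

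The key step is to determine the concordant partitions for an interval $I$ of length $n-1$. Its complement $I^c$ is a cyclic interval of $n+1$ consecutive elements. Concordance says that each block of $\overline\sigma$ contains exactly one odd element of $I^c$, and each block of $\tilde\sigma$ contains exactly one even element of $I^c$. Hence $\#\text{blocks}(\overline\sigma)=\#(\text{odd}\cap I^c)$ and $\#\text{blocks}(\tilde\sigma)=\#(\text{even}\cap I^c)$. These two counts sum to $n+1$, which matches the Kreweras-type relation $\#\text{blocks}(\sigma)+\#\text{blocks}(\tilde\sigma)=n+1$. I then plan to show that, because $I^c$ is a contiguous arc, the noncrossing condition pins $\sigma$ down uniquely, and that the answer depends only on $i$ in a way symmetric under $i\mapsto i+n$.

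Concretely, I expect the following. The odd elements of $I^c$ must be separated into distinct blocks of $\overline\sigma$, and the odd elements of $I$ must be absorbed into those blocks. Noncrossingness together with the dual condition on $\tilde\sigma$ forces each odd element of $I$ to join a specific block. I expect the forced block to be the one through the nearest odd element of $I^c$ on a prescribed side, with the two sides of the arc behaving symmetrically. The sanity check for $n=2$ and $i=1$ gives $A=\{4\}$ and $B=\{2\}$, and both have the unique concordant partition $1|2$. I will verify the general shape on this and on $n=3$ before writing the induction or direct argument.

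The symmetry step should then be straightforward. Replacing $i$ by $i+n$ swaps $A$ and $B$. I will show that the uniquely determined $\sigma$ is described intrinsically, for example via which of the $n+1$ boundary points of $I^c$ are "endpoints," in a way invariant under this swap; the resulting coefficient is $1$ on both sides. The main obstacle is the combinatorial uniqueness and identification of the concordant $\sigma$.

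If that direct analysis gets messy, the fallback is to compute both minors in the chart $\nU$ of \cite[Theorem 1.6]{CGS2021}, where both sides become explicit minors in the $S_{ij}$. Agreement on this dense open, together with reducedness of $\chi_n$ from Theorem~\ref{two notions of electroid space}, would finish the proof. A third option is to use the cyclic symmetry of Section~\ref{cyclic symmetry}, which reduces to checking a single $i$, provided the signs $(-1)^k$ are tracked consistently.
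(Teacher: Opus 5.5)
\textbf{There is a gap: the key step is missing, and your guessed answer for it is wrong.}

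Your reduction matches the paper's. You pass to $\Gr(n-1,2n)$ by the duality, use $\Delta_J=\sum_{\sigma\parallel J}L_\sigma$ on the linear slice, and compare the concordant partitions for the two complementary arcs. But the proposal stops where the content of the lemma begins: you never identify the concordant partitions.

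Regard $\overline\sigma\cup\tilde\sigma$ as a single noncrossing partition $\pi$ of the $2n$ points. Concordance with $J$ then means every block of $\pi$ meets the $(n+1)$-set $[2n]\setminus J$ exactly once. For the arc $[i,i+n]$, the only such $\pi$ is the fan
\[\{i\},\ \{i-1,i+1\},\ \{i-2,i+2\},\ \dots,\ \{i-(n-1),i+(n-1)\},\ \{i+n\},\]
obtained by reflecting across the diameter through $i$ and $i+n$. This is the partition the paper uses. It is visibly concordant with both arcs: each chord $\{i-k,i+k\}$ with $1\le k\le n-1$ has one endpoint in each of the open arcs $(i,i+n)$ and $(i+n,i+2n)$, and $i$ and $i+n$ lie in both closed arcs.

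Your guess that an element of the short arc joins the nearest same-parity element of $I^c$ on a prescribed side fails for $n=4$, $i=1$. There $A=\{6,7,8\}$, and $7$ must lie in the block $\{3,7\}$. The nearest odd elements of $[1,5]$ on either side of $7$ are $5$ and $1$. For $n\le 3$ the two descriptions happen to agree, so the sanity checks you planned would not catch this.

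You also need uniqueness, or at least that every $\pi$ concordant with $[i,i+n]$ is the fan. Here is one way to get it (take $n\ge 2$; $n=1$ is trivial).
\begin{enumerate}
\item Each block meets the arc exactly once, so the $n+1$ arc points lie in distinct blocks.
\item Any noncrossing partition of the $2n$ points into same-parity blocks has at least $n+1$ blocks, because the Kreweras complement is the coarsest compatible partition. So no two same-parity blocks of $\pi$ can be merged without creating a crossing.
\item Suppose the block of $i-1$ met the arc at $i+2m+1$ with $m\ge 1$. Noncrossingness would force every block meeting $\{i,\dots,i+2m\}$ to lie inside it, so each of these points is a singleton. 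Then $\{i\}$ and $\{i+2\}$ could be merged, contradicting step 2.
\item Hence $i-1$ and $i+1$ share a block and $\{i\}$ is a singleton. Deleting $i$ and identifying $i-1$ with $i+1$ gives the same situation for $n-1$, with the arc starting at the merged point. Induction then produces the fan.
\end{enumerate}
The same argument for $[i+n,i+2n]$ gives the fan centred at $i+n$, which is the same partition. So both coordinates equal the single grove coordinate $L_\sigma$.

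Your fallback of computing both minors in $\nU$ is valid in principle, but only once you actually compute the two minors with correct signs.
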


\begin{proof}
    Each of the sets $[i,i+1, \dots ,i+n]$ and $[i+n, i+n+1, \dots, i+2n]$  is concordant with only one set partition, namely $\{ \{i\} , \{i-1, i+1 \}, \{ i-2, i+2 \} ,\ldots, \{i+n\} \}$. So we have
    \[ \Delta_{[i,i+1, \dots ,i+n]} = L_{\{i\} , \{i-1, i+1 \}, \{ i-2, i+2 \} ,\ldots, \{i+n\}} =  \Delta_{[i+n, i+n+1, \dots, i+2n]}. \qedhere \]
\end{proof}
 So, for each $f \in \mathcal{C}$, the electroid variety $\chi_f$ is cut out by the vanishing of one interval Pl\"{u}cker inside $\Xn$ and is a divisor on $\chi_n$. 
 That is, $\chi_f$ is the zero locus of a global section of $\mathcal{O}_{\Xn}(1)$. Since there are $n$ such pairs of Pl\"{u}cker coordinates, there are $n$ distinct electroid divisors. 
 We will eventually show that the union of the $n$ electroid divisors is anticanonical, and induces a Frobenius splitting.


We now introduce local coordinates in the affine chart of unshorted cactus networks.
Recall that the response matrix $L=\{L_{ij}\}_{i,j=1}^{n}$ of an unshorted cactus network is a symmetric matrix whose rows and columns sum to zero. So the coordinate ring of the space of response matrices can be identified with the polynomial ring $k[L_{ij}:i<j]$. In this section, we will work with a monomial term order $\prec$ on $k[L_{ij}:i<j]$; see~\cite{MS2004} for the basic theory of term orders.

We chose our term order $\prec$ such that, for any $1 \leq p < q < r < s \leq n$, we have 
$L_{pq}L_{rs} \succ L_{pr}L_{qs}$ and $L_{ps}L_{qr} \succ L_{pr}L_{qs}$. (In other words, the two non-crossing monomials both dominate the monomial with a crossing.)
A specific term order which works is to order the variables in any manner such that $L_{ab} \succ L_{cd}$ if $|a-b|<|c-d|$, and then use the corresponding lexicographic order on monomials.


Let $L_I^{J}$ denote the minor of $L$ corresponding to rows in $I$ and columns in $J$. 

\begin{prop}\label{prop:sqrfree on not shorted}
   Let $n\geq3$. Fix coordinates $L_{ij}$ on $ \nU\simeq \operatorname{Spec} k[L_{ij}]$. Each electroid divisor inside $\nU$ is given by the vanishing of a polynomial in $k[L_{ij}]$. The initial term of the product of the $n$ electroid divisors is $\Pi_{i<j} L_{ij}$ (under the prescribed term order $\prec$).
\end{prop}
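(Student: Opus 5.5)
The plan is to make each of the $n$ electroid divisors explicit on $\nU$ as a minor of the response matrix. Then I compute the initial term of each minor under $\prec$, and finally check that the $n$ initial monomials multiply to $\prod_{i<j}L_{ij}$.

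\textbf{Step 1 (the divisor as a grove coordinate).} By the Lemma preceding the proposition, the divisor attached to the pair of cyclic intervals $[i,i+n]$ and $[i+n,i+2n]$ is cut out on $\chi_n$ by a single grove coordinate $L_\sigma$. Here $\sigma$ is the ``rainbow'' partition whose blocks are the nested pairs symmetric about the point $i$ of the $2n$-gon, together with at most two singletons. Translated to the $n$ boundary vertices, $\sigma$ has blocks $\{a_k,b_k\}$ for $k=1,\dots,m$, nested about a centre $c$ (a vertex or a midpoint between two vertices), plus the leftover singletons.

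On $\nU$ no boundary vertices are shorted, so the all-singletons grove coordinate $L_{\text{sing}}$ is nonvanishing there. I would therefore dehomogenize by it.

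\textbf{Step 2 (the grove ratio as a minor).} I would invoke the standard Curtis--Ingerman--Morrow / Kenyon--Wilson fact: for a partition consisting of pairs $\{a_k,b_k\}$ and singletons, $L_\sigma/L_{\text{sing}}$ is, up to sign, a sum of minors of $L$. That sum collapses to the single minor $\det L_{\{a_1,\dots,a_m\}}^{\{b_1,\dots,b_m\}}$ when the row set and column set lie in disjoint cyclic intervals and the nested matching is the unique noncrossing connection between them. Our $\sigma$ is exactly of this shape: rows $c-1,c-2,\dots$ on one side of the centre and columns $c+1,c+2,\dots$ on the other. Hence on $\nU$ the divisor is $V(F_c)$ with $F_c=\pm\det L_{A_c}^{B_c}$, which is a polynomial in the $L_{ij}$, $i<j$. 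This gives the first claim of the proposition.

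\textbf{Step 3 (initial terms).} For $\operatorname{in}_\prec F_c$, the expansion of $\det L_{A_c}^{B_c}$ is a signed sum over bijections $A_c\to B_c$. The off-diagonal entries of $L$ are the independent variables, and diagonal entries do not occur because $A_c\cap B_c=\emptyset$. Any bijection other than the nested one contains two chords $\{p,r\},\{q,s\}$ that cross. Replacing them by the noncrossing pair $\{p,q\},\{r,s\}$ or $\{p,s\},\{q,r\}$, whichever keeps one endpoint in each of $A_c$ and $B_c$, gives a monomial that is strictly larger by the defining property of $\prec$. Such a replacement is always available for two chords between two disjoint arcs.

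Iterating this, the nested monomial $\prod_k L_{a_kb_k}$ dominates every other term. Its coefficient is $\pm1$, because only one permutation produces that monomial. So $\operatorname{in}_\prec F_c=\prod_k L_{a_kb_k}$, and the initial term of the product is the product of these initial terms.

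\textbf{Step 4 (counting).} It remains to check that as $c$ runs over the $n$ centres, the nested chords $\{a_k,b_k\}$ of the various $\sigma_c$ cover each unordered pair $\{a,b\}\subset[n]$ exactly once. A chord $\{a,b\}$ is nested about $c$ exactly when $c$ is the midpoint of one of the two arcs from $a$ to $b$ (so $a+b\equiv 2c$ with the appropriate parity). Of the two antipodal midpoints, exactly one is the centre of one of our $n$ divisors, since the divisors pair the intervals $[i,i+n]$ and $[i+n,i+2n]$. As a degree check, the sum of the numbers of pairs is $\binom n2$, matching the degree of $\prod_{i<j}L_{ij}$.

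\textbf{Expected obstacle.} The delicate points are Step 2 and the uniqueness in Step 4. For Step 2, the identification of $L_\sigma/L_{\text{sing}}$ with a single minor, including signs and the exact index conventions relating the $2n$ strand labels to vertex labels, is where errors are likely; I would verify it first against the chart formula $L_{ji}=S_{i,j-1}-S_{ij}$ for $n=3,4$. For Step 4, checking that each chord is assigned to exactly one centre, rather than to both antipodal ones, could also fail if the conventions are off.
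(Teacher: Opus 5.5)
Your proof is correct. Steps 3 and 4 coincide with the paper's: the unique noncrossing bijection dominates under $\prec$, its coefficient is $\pm1$ because distinct bijections give distinct monomials, and each $L_{ij}$ occurs in exactly one initial term.

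You differ from the paper in how you show that each interval Pl\"ucker coordinate is a single minor of $L$.

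\emph{The paper's route.} It works by direct linear algebra in the explicit chart. It expands the minor of $X$ on a cyclic interval of columns along the odd columns, which are unit vectors. It then subtracts consecutive even columns so that the entries become $-L_{pq}$ via $L_{ji}=S_{i,j-1}-S_{ij}$. From this it reads off $\pm L_I^J$ with $I,J$ disjoint cyclic intervals, treating $n$ odd and $n$ even separately.

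\emph{Your route.} You pass through grove coordinates. The preceding Lemma makes $\Delta_{[i,i+n]}$ the single grove coordinate of the rainbow partition. $L_{\text{sing}}$ is a unit on $\nU$; rigorously, under the duality it is the Pl\"ucker coordinate on positions $\{1,3,\ldots,2n-1,2\}$, which is $\pm1$ on the chart. The all-minors matrix-tree theorem then finishes the identification. Note its direction: it writes the minor $\det L_A^B$ as a signed sum of $L_{\sigma_\pi}/L_{\text{sing}}$ over bijections $\pi\colon A\to B$, not the other way round. Planarity kills every crossing $\pi$, so for a circular pair only the nested term survives and your conclusion stands.

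\emph{What each buys.} Your approach explains why exactly these minors appear and treats both parities uniformly, as vertex- or midpoint-centred rainbows. It also settles your worry in Step 4: each chord is symmetric about exactly one of the $n$ reflection axes, and the two antipodal centres on an axis index the same divisor. The paper's approach is self-contained and needs nothing beyond the chart theorem.

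\emph{One step to make explicit.} The matrix-tree identity is a statement about genuine networks. To use it on all of $\nU$, note that both sides are regular functions on $\nU\simeq\mathbb{A}^{\binom n2}$. They agree on the top cell of response matrices, which is open in $\mathbb{R}^{\binom n2}$ and hence Zariski dense, so they agree on all of $\nU$.
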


\begin{proof}
Keep in mind that we index the columns by $1,\Tilde{1},2,\Tilde{2},\cdots ,n,\Tilde{n}$ and we index the rows by $0,1,\cdots n$. Thus, $S_{ij}$ is positioned at row $i$ and column $\Tilde{j}$. Every odd column $j$ has exactly a $1$ at row $j$ and all other entries being $0$.

For $n$ odd and even, we respectively compute all the Pl\"uckers indexed by an interval of length $n+1$ as a polynomial in $k[L_{ij}]$. Multiplying by $D$ on the right only changes the sign of a maximal minor, so we may only compute the maximal minors of $X$ indexed by an interval of length $n+1$.

\textbf{Case 1:} $n=2k+1$.

We can represent all minors as $X_{\Tilde{i},i+1, \cdots,\widetilde{i+k},{i+k+1}}.$ For $j=i+1,\cdots,i+k+1$, every odd column $j$ has a $1$ in row $j$ and all other entries $0$. Therefore, we can column expand on the odd columns, removing column $j$ and row $j$. The determinant is, up to sign, equal to the determinant of the $k+1$ by $k+1$ minor of the matrix
$$X'=\begin{bmatrix}
    1&1 &\cdots&1\\
    S_{1,{i}}&S_{1,{i+1}}&\cdots&S_{1,{i+k}}\\
    \vdots&\vdots&\cdots&\vdots\\
    S_{2k+1,{i}}&S_{2k+1,{i+1}}&\cdots&S_{2k+1,{i+k}}\\
\end{bmatrix}$$ 
where we omit the $S$'s with its first index in $j=i+1,\cdots,i+k+1$. To compute its determinant, we do the following column operations, starting with the last column and ending with the first column. For each column, subtract the preceding column from it. We get a matrix
$$X''=\begin{bmatrix}
    1&0 &\cdots&0\\
    S_{1,{i}}&S_{1,{i+1}}-S_{1,{i}}&\cdots&S_{1,{i+k}}-S_{1,{i+k-1}}\\
    \vdots&\vdots&\cdots&\vdots\\
    S_{2k+1,{i}}&S_{2k+1,{i+1}}-S_{2k+1,{i}}&\cdots&S_{2k+1,{i+k}}-S_{2k+1,{i+k-1}}\\
\end{bmatrix}.$$
And this equals 
$$\begin{bmatrix}
    1&0 &\cdots&0\\
    S_{1,{i}}&-L_{{i+1},1}&\cdots&-L_{{i+k},1}\\
    \vdots&\vdots&\cdots&\vdots\\
    S_{2k+1,{i}}&-L_{{i+1},2k+1}&\cdots&-L_{{i+k},2k+1}\\
\end{bmatrix}$$  
where we omit the $S$'s with its first index in $j=i+1,\cdots,i+k+1$. Finally, we may expand on the first row, which is the same as deleting the first column. We are left with, up to sign, the minor of the matrix $(L_{pq})$ with $p= i+1,\cdots, i+k$ and $q={i+k+2,\cdots,i}$ 

We conclude that up to sign, $\Delta_{i,i+1,\cdots,i+n}$ equals $L_{i+1,\cdots,i+k}^{i+k+2,\cdots, i}$. In other words, we get all $L_I^J$ for $I$ and $J$ disjoint intervals with $k$ elements each. There are $n$ of these $L^J_I$'s by varying $i$. Each monomial in the expansion of this determinant is given by a pairing of elements in the two cyclic intervals $[i+k+2,i]$ and $[i+1,i+k]$ of length $k$. By the term order that we picked, if we arrange $1$ through $n$ on a circle, any crossing can be uncrossed and the resulting monomial will go up in the term order. So the initial term is given by the only pairing with no crossings. That is, $L_{i,i+1} L_{i-1,i+2},\cdots,L_{i+k+2,i+1}$. Letting $i$ range through $[n]$ hits every $L_{ij}$ with $i<j$ exactly once.

\textbf{Case 2:} n=2k. We have two families of minors given in the following discussion.

\textbf{Case 2.1:} Consider minors given by $X_{i,\Tilde{i},\cdots, \widetilde{i+k-1},i+k}$ for $ i\in[k]$. For $j=i,\cdots,i+k$, every odd column $j$ has a $1$ in row $j$ and all other entries $0$. As before, we can column expand on the odd columns, removing columns $j$ and rows $j$. The determinant is, up to sign, equal to the determinant of the $k$ by $k$ minor of the matrix $$X'=\begin{bmatrix}
    1&1 &\cdots&1\\
    S_{1,{i}}&S_{1,{i+1}}&\cdots&S_{1,{i+k-1}}\\
    \vdots&\vdots&\cdots&\vdots\\
    S_{2k+1,{i}}&S_{2k+1,{i+1}}&\cdots&S_{2k+1,{i+k-1}}\\
\end{bmatrix}$$
where we omit the $S$'s with its first index in $j=i,\cdots,i+k$
The same reasoning as in the previous case shows that up to sign, $\Delta_{i,\Tilde{i},\cdots,\widetilde{i+k-1},i+k}$ equals $L_{i+1,\cdots,i+k-1}^{i+k+1,\cdots i-1}$

\textbf{Case 2.2:} Consider minors given by $X_{\Tilde{i},i+1,\cdots ,i+k,\widetilde{i+k}} , \quad i\in [k].$ For $j=i+1,\cdots,i+k$, we may remove columns $j$ and rows $j$. By the same reasoning, we conclude that up to sign, $X_{\Tilde{i},i+1,\cdots,i+k,\widetilde{i+k}}=L_{i,\cdots,i+k}^{i+k+1,\cdots, i}$

By the same reasoning as Case 1, the initial term is given by pairing the upper and lower indexing sets such that there are no crossings when embedded in a disk. And it is straightforward to check that combining the two families, we also run through all $L_{ij}$ with $i<j$ exactly once.
\end{proof}

\begin{eg}
When $n=5$, the five electroid divisors are given by the vanishing of Pl\"ucker coordinates $\Delta_{123456}, \Delta_{234567}, \dots, \Delta_{56789,10,11}$. All five divisors lie in the local chart $\nU$ of $\IG^{\Omega}(n+1, 2n)$. On $\nU$, the Pl\"ucker coordinate $\Delta_{i,i+1,\dots, i+5}$ is the minor of the response matrix $L$
in rows $i+1$, $i+2$ and columns $i+4$, $i+5 \bmod 5$, with leading term $L_{(i+1) (i+4)} L_{(i+2)(i+3)}$. For example, $\Delta_{123456} = L_{24} L_{35} - L_{25} L_{34}$ has leading term $L_{25} L_{34}$. 
Thus, the local expansion for these five electroid divisors has leading term $\prod_{i=1}^5 {\big(} L_{(i+1) (i+4)} L_{(i+2)(i+3)} {\big)} = \prod_{1 \leq i < j \leq 5} L_{ij}.$ 
\label{e:charts}
\end{eg}

\subsection{Proof of Frobenius Splitting}

We start by presenting two lemmas. The first, Lemma \ref{omega of LG}, will be used to show that electroid hypersurfaces are Frobenius split. The second, Proposition \ref{2 coverings}, will be used to extend the result to all electroid varieties.

\begin{lem}\label{omega of LG}
The Lagrangian Grassmannian $\LG(n-1,2n-2)$ and the electroid space $\chi_n$ have anticanonical bundle given by $\mathcal{O}(n)$ (using the $\mathcal{O}(1)$ from the Pl\"ucker embedding).
\end{lem}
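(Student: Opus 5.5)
The plan is to reduce the statement to the classical formula for the canonical bundle of a Lagrangian Grassmannian. The one real point is to check that the Pl\"ucker $\mathcal{O}(1)$ on $\chi_n$, coming from $\Gr(n+1,2n)$ or $\Gr(n-1,2n)$, agrees with the intrinsic Pl\"ucker $\mathcal{O}(1)$ of $\LG(n-1,2n-2)$.

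\textbf{Step 1: the classical computation.} Set $m=n-1$ and let $W$ be a $2m$-dimensional symplectic space. On $\LG(m,W)$ let $S$ be the tautological rank-$m$ subbundle. The tangent space at $U$ is the space of symmetric maps $U\to W/U\cong U^*$, where the isomorphism is induced by the form. Hence
\[
T_{\LG}\cong \operatorname{Sym}^2 S^*.
\]
For a rank-$m$ bundle $E$ one has $\det \operatorname{Sym}^2 E\cong (\det E)^{\otimes (m+1)}$; this can be checked on Chern roots, since each root appears $m+1$ times among the $x_i+x_j$ with $i\le j$. Therefore
\[
\omega_{\LG}^{-1}=\det T_{\LG}\cong (\det S^*)^{\otimes(m+1)}=\mathcal{O}(m+1)=\mathcal{O}(n),
\]
where $\det S^*$ is the Pl\"ucker $\mathcal{O}(1)$ of $\LG(m,W)\subset \Gr(m,W)$.

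\textbf{Step 2: compare the line bundles.} By Theorem~\ref{IsotropicSummary}, after the harmless linear change by $D$, $\chi_n=\IG^{\Omega}(n+1,2n)$. Here $\Omega$ has a $2$-dimensional kernel $K$, and $W=\CC^{2n}/K$ is symplectic of dimension $2n-2$. Every isotropic $(n+1)$-plane $U$ contains $K$, since otherwise $U+K$ would be an isotropic subspace whose image in $W$ has dimension greater than $n-1$. So the isomorphism to $\LG(n-1,W)$ is $U\mapsto U/K$.

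On tautological bundles there is a short exact sequence $0\to K\otimes\mathcal{O}\to S_U\to S_{U/K}\to 0$. Since $K$ is a fixed vector space,
\[
\det S_U\cong \det S_{U/K}.
\]
Thus the restriction of the Pl\"ucker $\mathcal{O}(1)$ of $\Gr(n+1,2n)$ is the Pl\"ucker $\mathcal{O}(1)$ of $\LG(n-1,W)$. Concretely, the Pl\"ucker embedding of $\LG$ is the composite of the Grassmannian Pl\"ucker embedding with the linear map $\wedge^{n-1}W\to\wedge^{n+1}\CC^{2n}$ obtained by wedging with a generator of $\wedge^2 K$.

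For the convention $\chi_n\subset\Gr(n-1,2n)$, the duality isomorphism of Section~\ref{Grassmannian Duality} is, up to signs, a linear isomorphism of Pl\"ucker spaces. It therefore pulls $\mathcal{O}(1)$ back to $\mathcal{O}(1)$. The right multiplication by $D$ is linear and likewise preserves $\mathcal{O}(1)$. So both embeddings give the same $\mathcal{O}(1)$ on $\chi_n$.

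\textbf{Main obstacle.} The only delicate step is the identification in Step 2: that all isotropic $(n+1)$-planes contain $K$, and that the determinant-of-tautological-bundle comparison is compatible with the linear identifications used in Theorem~\ref{IsotropicSummary}. If one prefers to avoid that bookkeeping, there is an alternative. Since $\operatorname{Pic}\LG\cong\mathbb{Z}$, generated by the Pl\"ucker bundle, it suffices to check that $\mathcal{O}(1)|_{\chi_n}$ is the ample generator rather than a multiple of it. One way to do this is to note that a Pl\"ucker coordinate, such as an interval Pl\"ucker cutting out an electroid divisor, vanishes on an irreducible reduced divisor. Then $\mathcal{O}(1)$ is effective of degree one in the Picard group and must be the generator. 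The canonical bundle formula from Step 1 then gives $\omega^{-1}=\mathcal{O}(n)$ on both $\LG(n-1,2n-2)$ and $\chi_n$.
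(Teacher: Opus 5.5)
Your main argument is correct. Step 2 is essentially the paper's own comparison. The paper also identifies $\IG^{\Omega}(n+1,V)$ with the isotropic Grassmannian of $V/\ker\Omega$, and matches Pl\"ucker embeddings by wedging with a fixed $a_1\wedge a_2$ spanning $\bigwedge^2\ker\Omega$. Your exact sequence $0\to K\otimes\mathcal{O}\to S_U\to S_{U/K}\to 0$ is the same observation. You also justify a step the paper leaves implicit: every isotropic $(n+1)$-plane contains $K$.

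Where you differ is the classical computation. The paper realizes $\LG(n,2n)$ as the zero locus in $\Gr(n,2n)$ of the section of $\bigwedge^2 S^\vee$ given by $\Omega$, and applies adjunction:
\[
\omega_{\LG}=\mathcal{O}(-2n)\otimes\det\bigl(\textstyle\bigwedge^2 S^\vee\bigr)=\mathcal{O}(-2n)\otimes\mathcal{O}(n-1).
\]
You instead compute $\det T_{\LG}$ directly from $T_{\LG}\cong\operatorname{Sym}^2S^*$ and $c_1(\operatorname{Sym}^2E)=(m+1)c_1(E)$. The two are compatible: on $\LG$ one has $Q\cong S^*$, so $T_{\Gr}|_{\LG}=\operatorname{Sym}^2S^*\oplus\bigwedge^2S^*$. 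Your route is more intrinsic. It needs neither $\omega_{\Gr}=\mathcal{O}(-2n)$ nor the fact that $\LG$ is the zero scheme of a regular section of $\bigwedge^2S^\vee$, which the paper uses tacitly to identify the normal bundle. The paper's route, in exchange, uses only standard Grassmannian facts plus adjunction, and never has to describe $T_{\LG}$.

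Your optional ``alternative'' at the end does not work as stated. A section of $\mathcal{O}(1)|_{\chi_n}$ vanishing on a reduced irreducible divisor does not show that $\mathcal{O}(1)$ generates $\operatorname{Pic}\cong\mathbb{Z}$. For example, a smooth conic in $\mathbb{P}^2$ is a prime divisor in the class $2H$.

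To make that argument work you would need an extra input, such as a curve of Pl\"ucker degree one in $\chi_n$. One candidate is the closure of a nonconstant orbit $a\mapsto u_i(a).Y$, whose Pl\"ucker coordinates are affine-linear in $a$ by Lemma~\ref{l:pluckerchange}. Since Step 2 already settles the comparison, it is simpler to drop the alternative.
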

\begin{proof}
    We first show that the Lagrangian Grassmannian $\LG(n,2n)$ has anticanonical bundle $\mathcal{O}(n+1)$. Let $V=\mathbb{C}^{2n}$ and $\Gr(n,2n)=\Gr(n,V)$. It is well known that the Grassmannian $\Gr(n,V)$ has canonical bundle $\mathcal{O}(-2n)$. Let $\mathcal{I}$ be the ideal sheaf of the closed embedding $\iota:\LG(n,V)\hookrightarrow \Gr(n,V)$. By the adjunction formula, we have the canonical bundle on $\LG$ 
    \[\omega_{\LG}=\iota^* \omega_{\Gr}\otimes \det(\mathcal{I}/\mathcal{I}^2)^\vee \]
    Let $S$ be the tautological vector bundle on $\Gr(n,2n)$. Its dual $S^\vee$ is a rank $n$ vector bundle whose fiber over $W\in \Gr(n,V)$ is $W^\vee$. A nondegenerate antisymmetric bilinear form $\Omega$ on $V$ defines a global section of $\bigwedge^2 S^\vee$; over each $W\in \Gr(n,V)$, $\Omega\in \bigwedge^2V^\vee$ restricted to $W$ produces $\Omega_{|_W}\in\bigwedge^2 W^\vee$, which is the fiber of $\bigwedge^2 S^\vee$ at $W$. Thus, $\LG=\{W\in \Gr(n,V): \Omega_{|_W}=0\}$ is the zero locus in $\Gr(n,V)$ of a section of $\bigwedge^2 S^\vee$. Thus, the normal bundle $(\mathcal{I}/\mathcal{I}^2)^\vee$ of $\iota:\LG(n,V)\hookrightarrow \Gr(n,V)$ is given by $\iota^*\bigwedge^2 S^\vee$

    Now, we compute the first Chern class of the normal bundle.
    \begin{align*}
        c_1(\det (\mathcal{I}/\mathcal{I}^2)^\vee)
        &=c_1(\det (\iota^*\bigwedge^2 S^\vee))\\&=c_1(\iota^*\bigwedge^2 S^\vee) \text{  by the Splitting principle }\\
        &=\iota^*c_1(\bigwedge^2 S^\vee) \text{  Chern class commutes with flat pullback}\\
        &=\iota^*((n-1)c_1(S^\vee)) \text{  by the Splitting principle}\\
        &=\iota^*((n-1)c_1(\mathcal{O}(1))) \text{  by \cite[Page 178]{EH2016}}\\
        &=c_1(i^*\mathcal{O}(n-1))=c_1(\mathcal{O}(n-1)).        
    \end{align*}
Since line bundles are determined up to isomorphism by their first Chern classes $L=O_X(c_1(L))$, we conclude that $\det (\mathcal{I}/\mathcal{I}^2)^\vee)=\mathcal{O}(n-1)$. Thus, $\omega_{\LG}=\iota^* \omega_{\Gr}\otimes \det(\mathcal{I}/\mathcal{I}^2)^\vee=\mathcal{O}(-2n)\otimes \mathcal{O}(n-1)=\mathcal{O}(-n-1)$. 

Now, we consider the electroid space. Fix a $2n$-dimensional vector space $V$. Recall that $\Omega$ is now an anti-symmetric form on $V$ with $2$-dimensional $\ker \Omega$. We reduce to the previous case as follows. Notice that the embedding of $\Gr(n-1, V/\ker\Omega)\hookrightarrow\Gr(n+1,V)$ as the closed Schubert variety consisting of subspaces containing $\ker\Omega$ lifts to an embedding of the respective Pl\"ucker spaces as a linear subspace. More explicitly, consider Pl\"ucker embeddings $$\Gr(n-1, V/\ker\Omega)\hookrightarrow\mathbb{P}(\bigwedge^{n-1 } (V/\ker\Omega))=\mathbb{P}^{\binom{2n-2}{n-1}-1}, \qquad  \spann\{v_1,\cdots,v_{n-1}\}\mapsto v_1\wedge\cdots \wedge v_{n-1}$$ and 
$$\Gr(n+1, V)\hookrightarrow\mathbb{P}(\bigwedge^{n+1 } V)=\mathbb{P}^{\binom{2n}{n+1}-1}, \qquad  \spann\{v_1,\cdots,v_{n+1}\}\mapsto v_1\wedge\cdots \wedge v_{n+1}.$$ 
The lift $\mathbb{P}(\bigwedge^{n-1 } (V/\ker\Omega))\hookrightarrow \mathbb{P}(\bigwedge^{n+1 } V)$ is given by $\tilde{v}_1\wedge \cdots \wedge \tilde{v}_{n-1} \mapsto v_1\wedge\cdots \wedge v_{n-1} \wedge a_1 \wedge a_2$,
where $a_1$ and $a_2$ are any fixed basis vectors for the $2$-dimensional $\ker V$ and $v_i$ is any lift of $\tilde{v}_i\in V/\ker \Omega$ to $V$. Thus, $\mathcal{O}(1)$ pulls back to $\mathcal{O}(1)$ via $\Gr(n-1, V/\ker\Omega)\hookrightarrow\Gr(n+1,V)$. Thus, the same holds for $\IG^{\Omega}(n+1,V)\simeq \IG^{\Omega}(n-1,V/\ker\Omega)$ after passing to isotropics. Finally, the identification $\IG^{\Omega}(n-1,V/\ker\Omega)$ with $\LG(n-1, V/\ker\Omega)$ is by a change of nondegenerate antisymmetric form, which is a linear change of coordinates on $V/\ker\Omega$. So this also lifts to a linear change of coordinate on Pl\"ucker spaces. Thus, the identification $\LG(n-1,V/\ker \Omega)\simeq \IG^{\Omega}(n+1,V)$ identifies $\mathcal{O}(1)$ on both sides. It follows from the previous part that $\chi_n$ also has anticanonical bundle $\mathcal{O}(n)$.
\end{proof}

\begin{prop}\label{2 coverings}
In the poset of pairings on $[2n]$, every interval of length $2$ is a diamond. Namely, for any $f<g$ such that $
l(f)+2=l(g)$, we have exactly two elements $h_1,h_2$ such that $f<h_1,h_2<g$. Furthermore, $h_1$ and $h_2$ are not comparable.
\end{prop}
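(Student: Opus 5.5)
We propose to transport the statement to a setting where the diamond property is already known, namely the poset of bounded affine permutations. By Theorem~\ref{matching to permutation}, $\tau\mapsto f_\tau$ is a poset isomorphism from pairings onto $\El(n)$. It is graded by the crossing number $c(\tau)$, which we identify with the length $l$. It is well known that intervals in the poset of bounded affine permutations (equivalently, positroids) are Eulerian, being dual to intervals of Bruhat order in the affine symmetric group; see \cite{KLS}. In particular, every length-$2$ interval there has exactly two middle elements. The difficulty is that $\El(n)$ is a \emph{subposet} with an order that is not obviously induced from the ambient one. We therefore prefer a direct combinatorial argument on pairings, using the Eulerian fact only as a sanity check. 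Incomparability of $h_1,h_2$ is automatic once both lie strictly between $f$ and $g$ with $l(h_i)=l(f)+1$, since elements of the same rank in a graded poset are incomparable. So the content is the count ``exactly two''.

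\textbf{Step 1: cover relations.} We describe covers $\tau'\lessdot\tau$ concretely. Take $\tau$ critical (the crossing number is realized by a critical medial graph) and resolve a single crossing of strands $(a,c),(b,d)$ into $(a,b),(c,d)$ or $(a,d),(b,c)$. Then $c(\tau')=c(\tau)-1$ exactly when the resolution creates no new ``bigon'' reduction. The clean criterion is that no other strand separates the two resolved endpoints in the chosen way. Following Lam \cite{Lam2018}, a resolution drops the crossing number by exactly one if and only if, in the critical strand diagram, the chosen crossing can be made adjacent to a boundary face via Yang--Baxter moves. We will instead work with the explicit formula: $c(\tau)$ is the number of pairs of chords that cross as chords on the circle. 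Then $\tau'$ obtained from $\tau$ by replacing chords $\{a,c\},\{b,d\}$ is a cover if and only if the crossing count drops by exactly $1$. Equivalently, every other chord crosses the same number of chords among the old pair and the new pair.

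\textbf{Step 2: classify length-2 intervals.} Given $f<g$ with $c(g)=c(f)+2$, a chain $g\gtrdot h\gtrdot f$ means $f$ differs from $g$ in at most four chords. We case split on how many chords of $g$ are altered in passing to $f$, which is either $3$ or $4$ since two $2$-chord moves are involved. When $4$ chords change, the two moves act on disjoint pairs. The two middle elements are obtained by performing either move first, and there are no others, because any middle element must agree with $g$ or $f$ on each of the two disjoint pairs. When $3$ chords change, the six endpoints lie on the circle and the problem reduces to pairings of $6$ points, up to the passive chords. Here we check, using the local crossing counts from Step 1, that exactly two of the intermediate $6$-point pairings sit at the right rank and are compatible with the other chords. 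This is the situation of Figure~\ref{f:uncrossing}, where a top element with three mutual crossings covers four elements and each rank-$1$ element is covered by exactly two of them.

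\textbf{Main obstacle.} The hard part is the $3$-chord case when other strands interact, because whether a resolution is a cover depends on the passive chords. We must show that exactly two middle elements survive, neither more nor fewer. We would first try to reduce to the ambient Eulerian property. Any bounded affine permutation $h$ with $f_{\tau'}<h<f_\tau$ in the positroid order should automatically be electrical: for instance, use that $\Xo_\tau$ closure meets only electroid strata, or that $h$ must satisfy the involution condition $g(g(i))=i+2n$ by a rank argument. With that reduction, the diamond property is inherited from Bruhat order. If the reduction fails, we fall back on a finite check of the $6$-point configurations together with the invariance criterion of Step 1.
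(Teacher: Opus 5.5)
Your four-chord case is essentially right, but the proposal stops exactly where the content of the statement lies. The three-chord case is only asserted (``we check\ldots''), and the reduction you say you would try first is false. The order on $\El(n)$ is induced from bounded affine permutations, but the gradings differ: $c(\tau)=\dim\chi_\tau$, whereas the positroid rank is $\dim\Pi_{f_\tau}$. So a bounded affine permutation strictly between two electrical ones need not be electrical, and no diamond property is inherited from Bruhat order. This already fails for $n=2$. In $\Gr(1,4)=\mathbb{P}^3$ one has $\chi_2=\{\Delta_1=\Delta_3,\ \Delta_2=\Delta_4\}$. The pairing $(1,3)(2,4)$ has $\Pi_f=\mathbb{P}^3$, and $(1,2)(3,4)$ has $\Pi_f=\{\Delta_1=\Delta_3=0\}$. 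These form a cover in $\El(2)$, yet the positroid interval between them has length $2$, with middle elements $\{\Delta_1=0\}$ and $\{\Delta_3=0\}$, neither of which is electrical. (Also, the top element of Figure~\ref{f:uncrossing} has two crossings, not three.)

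The missing idea is a positivity/parity count, and it also justifies your ``either move first'' in the four-chord case. Take $a<b<c<d$ cyclically. Replacing $(a,c),(b,d)$ by $(a,b),(c,d)$ lowers $c$ by exactly $1+2k$, where $k$ is the number of chords joining the open arcs $(b,c)$ and $(d,a)$; every other chord meets equally many old and new chords. For the resolution $(a,d),(b,c)$, use the arcs $(a,b)$ and $(c,d)$ instead. So every resolution lowers $c$ by a positive odd integer. Since $c(g)-c(f)=2$, \emph{every} two-step resolution path $g\to h\to f$ consists of two covers, and the passive chords can never obstruct anything, contrary to your ``main obstacle''. In the three-chord case the second move uses exactly one chord produced by the first: using none would alter four chords of $g$, and the two produced chords do not cross. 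So both moves live on the six endpoints, and middle elements correspond bijectively to pairings $h$ of those six points with $g|_6\to h\to f|_6$ single resolutions. This is a finite check in the $15$-element poset of pairings of $[6]$. Up to rotation it consists of two kinds of intervals:
\begin{itemize}
\item the top $(1,4)(2,5)(3,6)$ over $(1,3)(2,4)(5,6)$, whose middle elements are $(1,5)(2,4)(3,6)$ and $(1,3)(2,5)(4,6)$;
\item the four intervals from $(1,5)(2,4)(3,6)$ down to the bottom row of Figure~\ref{f:uncrossing}.
\end{itemize}
With this added, your direct argument would be a complete, self-contained proof. The paper avoids all of it by citing Lam's theorem \cite{L15} that the uncrossing order is Eulerian: then $\mu(f,g)=1$ on a length-$2$ interval, which forces exactly two middle elements.
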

\begin{proof}
The poset of pairings is Eulerian \cite{L15}. The Mobius function characterization of Eulerian posets immediately implies the proposition.   
\end{proof}

\begin{proof}[Proof of Theorem \ref{Main1}.4 (Frobenius Splitting).]
The strategy of this proof is as follows: First, we show that the $n$ electroid divisors are compatibly split. Then, by induction on codimension, we show that all other electroid varieties are split.

By Lemma~\ref{omega of LG}, the union of the $n$ electroid divisors in anti-canonical, so let $\sigma$ be a section of $\omega^{-1}$ vanishing (to order $1$) on the electroid divisors. 
Using Lemma~\ref{prop:sqrfree on not shorted}, the criterion of Theorem~\ref{LeadingTermCriterion} applies to $\sigma^{p-1}$, so $\sigma^{p-1}$ induces a splitting. 
By Theorem~\ref{LeadingTermCriterion}, the zero locus of $\sigma$, which is the union of the electroid divisors, is compatibly split. 
By Theorem~\ref{IntersectDecompose}, each irreducible component of this zero locus, which is to say, each electroid divisor, is compatibly split. 

We will now prove, by induction on $c$, that every electroid variety of codimension $c$ is compatibly split. 
We have done the base case, $c=1$, above.
Let $\chi_{\tau}$ be an electroid variety of codimension $c>1$ and suppose that all electroid varieties of lower codimension are compatibly split. 
Since $\El(n)$ is graded with a unique maximal element, we can find $\tau_2 \gtrdot \tau_1 \gtrdot \tau$. And then, since the interval $[\tau, \tau_2]$ is a diamond (Lemma~\ref{2 coverings}), we can find a second $\tau'_1 \neq \tau_1$ with $\tau_2 \gtrdot \tau'_1 \gtrdot \tau$.

By induction, $X_{\tau_1}$ and $X_{\tau'_1}$ are compatibly split, so $X_{\tau_1} \cap X_{\tau'_1}$ is compatibly split. In particular, $X_{\tau_1} \cap X_{\tau'_1}$ is reduced. Since $X_{\tau}$ is irreducible and codimension $1$ inside each of $X_{\tau_1}$ and $X_{\tau'_1}$, we see that $X_{\tau}$ is an irreducible component of $X_{\tau_1} \cap X_{\tau'_1}$, so $X_{\tau}$ is compatibly split. 
\end{proof}

\subsection{Consequences of Frobenius Splitting}

\begin{proof}[Proof of Theorem \ref{Main1}.2, \ref{Main1}.5, and \ref{Main1}.3 except R1]
Reducedness is guaranteed by being compatibly split \cite{BrionKumar}. The fact that closed electroid varieties are the disjoint union of open electroid varieties is proved in \cite{Lam2018}, also a consequence of Theorem \ref{cell decomp}. Thus, $\chi_\tau=\bigsqcup_{\mu\leq\tau} \Xo_\mu$ is a Zariski closed set containing $(\Xo_{\tau})_{\geq0}$. We induct on $c(\tau)$ to show that the Zariski closure of $(\Xo_{\tau})_{\geq0}$ contains $\chi_\tau$.

When $c(\tau)=0$, $(\Xo_{\tau})_{\geq0}$ is a point. By irreducibility and reducedness of the open electroid varieties and the dimension count, $\Xo_\tau=\chi_\tau$ is a reduced point.

Suppose the result holds for all $c(\tau)<c$. Take a pairing $\tau$ with $c(\tau)=c$. Recall from Theorem \ref{ElectricalSummary} that the analytic closure of $(\Xo_{\tau})_{\geq0}$ equals $\bigsqcup_{\mu\leq\tau}(\Xo_{\mu})_{\geq0}$. Since the Zariski topology is coarser than the analytic topology, the Zariski closure of $(\Xo_{\tau})_{\geq0}$ contains the Zariski closure of $\bigsqcup_{\mu\leq \tau}(\Xo_{\mu})_{\geq0}$. In particular, this contains the Zariski closure of $(\Xo_{\mu})_{\geq0}$ for each $\mu<\tau$, which equals $\chi_\mu$ by the inductive hypothesis. On the other hand, we know from Theorem \ref{Main1}.1 that $\Xo_\tau$ is irreducible with Krull dimension $c(\tau)$. By Theorem \ref{ElectricalSummary}, $(\Xo_{\tau})_{\geq0}$ is a ball of the same dimension, $c(\tau)$, contained in $\Xo_\tau$. So, $(\Xo_{\tau})_{\geq0}$ is dense in $\Xo_\tau$. 
    
Combining the above, the Zariski closure of $(\Xo_{\tau})_{\geq0}$ contains $\Xo_\tau \bigcup (\bigcup_{\mu<\tau}\chi_\mu)=\chi_\tau$ as a set. Since the right hand side is reduced, we have containment as a scheme. Thus, the electroid cell is dense in the open electroid variety, which in turn is dense in the closed electroid variety. Irreducibility and dimension count for closed electroid varieties follow from those for electroid cells and open electroid varieties.
\end{proof}

With these results, we can prove a stronger version of Proposition \ref{p:uncrossing adjacent}, which will be of use in the next section.

\begin{prop}\label{p:uncrossing adjacent strong}
We have an open embedding of algebraic varieties
    \begin{align*}
    \psi: \mathring{\chi}_{\tau}\times\mathbb{C} &\longrightarrow \mathring{\chi}_{\tau}\bigsqcup \mathring{\chi}_{\tau'}\\
    (Y,a) &\longmapsto u_i(a).Y
\end{align*}
where the target is an open subscheme of $\chi_{\tau'}$. The image of $\psi$ contains $\mathring{\chi}_{\tau}$ and is the dense open subscheme of the target given by the nonvanishing of $\Delta_{I_{i+1}'(\tau')}$. The inverse of $\psi$ can be obtained by $X\mapsto (u_i(-a).X,a)$, where $a=\frac{\Delta_{I_{i+1}(\tau')}}{\Delta_{I_{i+1}'(\tau')}}$.
\end{prop}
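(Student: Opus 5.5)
The plan is to extend Proposition~\ref{p:uncrossing adjacent} across the fiber $a=0$. Set $W:=\Xo_\tau\sqcup\Xo_{\tau'}$. Since $\tau\lessdot\tau'$ and, by Theorem~\ref{Main1}.5, $\chi_{\tau'}=\bigsqcup_{\mu\le\tau'}\Xo_\mu$, the complement of $W$ in $\chi_{\tau'}$ is $\bigcup_{\mu\lessdot\tau',\,\mu\ne\tau}\chi_\mu$. This is closed, so $W$ is open in the reduced irreducible scheme $\chi_{\tau'}$. Because $\chi_{\tau'}$ is compatibly split, $W$ is reduced, and it is irreducible. I will then define $\psi(Y,a)=u_i(a).Y$ on $\Xo_\tau\times\mathbb{C}$. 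For $a\neq 0$, Proposition~\ref{p:crossing} puts $u_i(a).Y$ in $\Xo_{\tau'}$. For $a=0$ the map is the identity on $\Xo_\tau$. Since $\chi_n$ is stable under $u_i(a)$, the map is a morphism into $\chi_n$. Its set-theoretic image lies in $W\subset\chi_{\tau'}$, and $\Xo_\tau\times\mathbb{C}$ is reduced (Theorem~\ref{Main1}.1), so $\psi$ factors through the reduced open subscheme $W$.

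Next, let $U:=\{X\in W : \Delta_{I'_{i+1}(\tau')}(X)\neq 0\}$. By Lemma~\ref{vanishing lem 2}(2), $\Delta_{I'_{i+1}(\tau')}$ is nowhere zero on $\Xo_\tau$, so $U\supseteq\Xo_\tau$. Moreover $U\cap\Xo_{\tau'}$ is exactly the image in Proposition~\ref{p:uncrossing adjacent}. On $U$ the function $b=\Delta_{I_{i+1}(\tau')}/\Delta_{I'_{i+1}(\tau')}$ is regular. It vanishes on $\Xo_\tau$ by Lemma~\ref{vanishing lem 2}(1), and it is nonzero on $\Xo_{\tau'}$ because $I_{i+1}(\tau')$ lies in the necklace of $\tau'$. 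I then define $\phi(X)=(u_i(-b).X,\,b)$. To check that $\phi$ lands in $\Xo_\tau\times\mathbb{C}$: on $U\cap\Xo_{\tau'}$ this is \cite[Proposition 5.25]{Lam2018}, as in the proof of Proposition~\ref{p:uncrossing adjacent}, and on $\Xo_\tau$ we have $b=0$, so $\phi(X)=(X,0)$. Set-theoretically the image therefore lies in $\Xo_\tau\times\mathbb{C}$. Since $U$ is reduced and $\Xo_\tau\times\mathbb{C}$ is a locally closed subscheme of $\chi_n\times\mathbb{C}$, $\phi$ factors scheme-theoretically. This reducedness point is where the Frobenius splitting results enter, and it is the reason this strengthened statement needs them.

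The two composites are identities. First, $\psi\circ\phi(X)=u_i(b)u_i(-b).X=X$. Second, for $\phi\circ\psi$ I use Lemma~\ref{l:pluckerchange}, exactly as before. It gives $\Delta_{I'_{i+1}(\tau')}(u_i(a).Y)=\Delta_{I'_{i+1}(\tau')}(Y)\neq0$ and $\Delta_{I_{i+1}(\tau')}(u_i(a).Y)=a\,\Delta_{I'_{i+1}(\tau')}(Y)$, and both identities now hold for all $a\in\mathbb{C}$, including $a=0$. Hence $\psi$ maps into $U$ with $b=a$, and $\phi\psi(Y,a)=(Y,a)$. So $\psi$ is an isomorphism onto $U$, which is open in $W$. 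It is dense because $W$ is irreducible and $U\supseteq\Xo_\tau\neq\emptyset$. Its inverse is the stated formula.

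The main obstacle is the scheme-theoretic bookkeeping: showing that $W$ is a reduced open subscheme and that $\phi$ really lands in the locally closed subscheme $\Xo_\tau\times\mathbb{C}$ along $b=0$. Reducedness from Theorem~\ref{Main1}.3 and 4 handles this, and the rest repeats the earlier argument verbatim.
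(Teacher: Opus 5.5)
Your proposal is correct and follows essentially the same route as the paper. You extend $\psi$ to $a\in\mathbb{C}$, use reducedness of the source (and of $\chi_{\tau'}$, via the splitting results) to factor it through the open piece $\Xo_\tau\sqcup\Xo_{\tau'}$ of $\chi_{\tau'}$, and invert it using $b=\Delta_{I_{i+1}(\tau')}/\Delta_{I'_{i+1}(\tau')}$. You are in fact more careful than the paper, which only says the inverse is ``inherited'' from Proposition~\ref{p:uncrossing adjacent}; you check that the inverse extends over $b=0$ and lands in $\Xo_\tau\times\mathbb{C}$. The one thing you should add is a citation of the diamond property (Proposition~\ref{2 coverings}) to justify that the complement of $W$ in $\chi_{\tau'}$ is $\bigcup_{\mu\lessdot\tau',\,\mu\neq\tau}\chi_\mu$.
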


\begin{proof}
We may a priori define the extended morphism $\psi:(Y,a) \longmapsto u_i(a).Y$ from $\mathring{\chi}_{f}\times\mathbb{C}$ to the Grassmannian. Since $\psi(Y,0)=u_i(0).Y=Y$, the set theoretic image of $\psi$ is $\{X\in \mathring{\chi}_{f'} \mid \Delta_{I_{i+1}'(f')}(X)\not=0 \}\bigsqcup \mathring{\chi}_{f}$. By Theorem \ref{Main1}.5, the Zariski closure of the image of $\psi$ is the closed electroid variety $\chi_{f'}$. The source is reduced, so the scheme theoretic image of $\psi$ is $\chi_{f'}$. Thus, the extended $\psi$ factors through the closed embedding of the closed electroid into the Grassmannian.

We showed in Lemma \ref{vanishing lem 2}(ii) that $\Delta_{I_{i+1}'(f')}$ does not vanish on the entire $\mathring{\chi}_{f}$. So the set theoretic image of $\psi$ is indeed equal to the dense open subscheme $\Xo_f\bigsqcup \Xo_{f'}$ in $\chi_{f'}$. Finally, the inverse is inherited from Proposition \ref{p:uncrossing adjacent}.
\end{proof}

\section{Regularity in Codimension One}

Given a covering relation $\tau\lessdot\tau'$, call $\tau\lessdot\tau'$ a \emph{regular pair} if the closed electroid variety $\chi_{\tau'}$ is regular along the generic point of the open electroid variety $\Xo_{\tau}$. From Theorem \ref{Main1}.1, open electroid varieties are smooth. Thus, regularity in codimension $1$ of closed electroid varieties is guaranteed by asserting that all covering relations are regular pairs. 

Since open electroids are smooth, Proposition \ref{p:uncrossing adjacent strong} shows that $i$-crossing pairs are regular. However, most covering relations are obtained by uncrossing non-adjacent strands, and thus are not $i$-crossing pairs. Nonetheless, we will show that all covering relations are regular pairs. As a consequence, we will show that closed electroid varieties are regular in codimension $1$.

\begin{lem}\label{good i}
Suppose $\tau\lessdot\tau'$ and $\tau$ is obtained from $\tau'$ by uncrossing non-adjacent strands $b$ and $c$. Let $a<b<c<d$ be non-adjacent numbers in cyclic order such that $\tau'(b)=d, \tau'(c)=a$ and $\tau(b)=a, \tau(c)=d$. Suppose there exists $i$ such that $\tau(i)\not=i+1$ and strands $i,i+1$ cross in $\tau'$. We can construct $\sigma$ and $\sigma'$ such that $\sigma\lessdot \tau$ and $\sigma'\lessdot \tau'$ are $i$-crossing pairs, respectively. If either
\begin{enumerate}[label=(\roman*)]
    \item $a\leq i\leq b-1$, or
    \item $d<i\leq a-1, a<\tau(i)<b,$ and $c\leq \tau(i+1)<d$
\end{enumerate}
then $ \sigma\lessdot\sigma'$.
\end{lem}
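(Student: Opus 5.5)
The plan is to construct $\sigma$ and $\sigma'$ explicitly and then do a finite case check on how the indices $i,i+1$ sit relative to $a<b<c<d$. Define $\sigma'$ from $\tau'$ by uncrossing strands $i$ and $i+1$, that is, swapping the partners: $\sigma'(i)=\tau'(i+1)$ and $\sigma'(i+1)=\tau'(i)$, with all other values as in $\tau'$. Define $\sigma$ from $\tau$ in the same way. Since strands $i,i+1$ cross in $\tau'$, the pair $\sigma'\lessdot\tau'$ is an $i$-crossing pair, so $c(\sigma')=c(\tau')-1$. For $\sigma$ we first have to check that strands $i$ and $i+1$ also cross in $\tau$; in particular they must be distinct strands, which is where the hypothesis $\tau(i)\neq i+1$ is used. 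Given that, $\sigma\lessdot\tau$ is an $i$-crossing pair and $c(\sigma)=c(\tau)-1=c(\tau')-2=c(\sigma')-1$.

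By the grading of the poset of pairings (Definition~\ref{poset of pairings}), it then suffices to show that $\sigma$ is obtained from $\sigma'$ by resolving one crossing. Concretely, I will exhibit two crossing strands of $\sigma'$ whose resolution yields exactly $\sigma$. The key observation is that the operation ``swap the partners of $i$ and $i+1$'' and the operation ``replace $(b,d),(c,a)$ by $(b,a),(c,d)$'' act on disjoint data unless $\{i,i+1\}$ meets $\{a,b,c,d\}$. So the case split is on this intersection.

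\textbf{Case (i), $a\le i\le b-1$.}
\begin{itemize}
\item If $\{i,i+1\}\cap\{a,b,c,d\}=\emptyset$, the two operations commute. Then $\sigma'$ still contains the crossing strands $(b,d)$ and $(c,a)$, and resolving them gives $\sigma$.
\item If $i=a$, then $i+1\ne b$, since otherwise $\tau(i)=b=i+1$. Also $i+1\neq c$: otherwise $\tau'(i)=c=i+1$ and strands $i,i+1$ would be the same strand of $\tau'$, contradicting that they cross. So $a<i+1<b$. Put $e=\tau'(i+1)=\tau(i+1)$. Then $\sigma'$ has strands $(i,e)$, $(i+1,c)$, $(b,d)$, and $\sigma$ has $(i,e)$, $(i+1,b)$, $(c,d)$. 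In the cyclic order $i+1<b<c<d$, the strands $(i+1,c)$ and $(b,d)$ of $\sigma'$ cross, and resolving them gives $\sigma$.
\item If $i+1=b$, then $i\neq a$ by non-adjacency. A symmetric computation shows that $(i,d)$ and $(c,a)$ cross in $\sigma'$ and resolve to the strands $(i,a),(c,d)$ of $\sigma$.
\end{itemize}
In each subcase I will also verify that strands $i,i+1$ cross in $\tau$. This follows from knowing where $\tau(i)$ and $\tau(i+1)$ lie relative to $a,b,c,d$, using that they cross in $\tau'$ and that $\tau$ differs from $\tau'$ only on $\{a,b,c,d\}$.

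\textbf{Case (ii).} Here $i,i+1$ lie in the arc $(d,a)$, and the hypotheses place $\tau(i)$ in $(a,b)$ and $\tau(i+1)$ in $[c,d)$. The subcases are $\tau(i+1)=c$ versus $\tau(i+1)\in(c,d)$, together with the possibilities $i+1=a$ or $i=d$. In each subcase, write down the four or five relevant strands of $\tau,\tau',\sigma,\sigma'$. Then find the crossing of $\sigma'$, typically $(b,d)$ against the strand through $c$, or the strand from $i$ or $i+1$ against $(b,d)$, whose resolution gives $\sigma$. Check cyclic positions to confirm that the two strands really cross.

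I expect the main obstacle to be case (ii), specifically the subcase $\tau(i+1)=c$. There the swap at $i,i+1$ moves the endpoint $c$ itself, so the crossing to be resolved in $\sigma'$ is no longer $(b,d),(c,a)$ but involves the strand from $i$ or $i+1$. One also has to confirm that strands $i,i+1$ still cross in $\tau$. My approach is to draw the endpoints $d<i<i+1<a<\tau(i)<b<c\le\tau(i+1)<d$ on a circle, read off all four pairings directly, and verify the single crossing.
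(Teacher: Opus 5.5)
Your construction of $\sigma$ and $\sigma'$ by swapping the partners of $i$ and $i+1$ matches the paper's proof, and so does your use of the grading to reduce to exhibiting one resolvable crossing of $\sigma'$. Your case (i) is also the paper's: the same subcases $a<i<b-1$, $i=a$, $i=b-1$, resolving respectively $(a,c),(b,d)$, then $(i+1,c),(b,d)$, then $(i,d),(a,c)$. The last of these the paper only calls ``similar''.

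The shortfall is case (ii), which you defer rather than prove. Moreover, the subcases you expect there cannot occur under the hypotheses as written:
\begin{itemize}
\item $\tau(i+1)=c$ would force $i+1=\tau(c)=d$, which is impossible since $d<i<i+1\le a$ cyclically;
\item $i+1=a$ would give $\tau(i+1)=\tau(a)=b\notin[c,d)$;
\item $i=d$ is excluded outright.
\end{itemize}
So in (ii) as stated, $i,i+1\notin\{a,b,c,d\}$. The strands through $i$ and $i+1$ are then identical in $\tau$ and $\tau'$, hence they cross in $\tau$. Your commuting argument from the first bullet of case (i) then finishes the proof verbatim.

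The paper's own Case 2 argument, however, treats a different configuration: $i+1=a$ and $\tau'(i+1)=c$. The same is true of its later use with $i=a-1$ in the proof of Lemma~\ref{boundary genericly smooth}. In other words, the paper reads condition (ii) with $\tau'(i+1)$ in place of $\tau(i+1)$. That configuration is exactly your ``obstacle'', and it takes only a few lines:
\begin{itemize}
\item $\sigma'$ has strands $(i,c)$, $(a,\tau'(i))$, $(b,d)$, while $\sigma$ has $(i,b)$, $(a,\tau'(i))$, $(c,d)$.
\item Since $i<b<c<d$ cyclically, $(i,c)$ and $(b,d)$ cross and resolve to $(i,b),(c,d)$.
\item Strands $i$ and $a$ cross in $\tau$, because $\tau(i)=\tau'(i)\in(a,b)$ while strand $a$ of $\tau$ is $(a,b)$.
\end{itemize}

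You should write out (ii) under both readings explicitly, rather than leaving it as a promised check.
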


\begin{proof}
\textbf{Case 1:} If $a<i<b-1$, then strand $i$ and $i+1$ are disjoint from the vertices $a,b,c,d$. So they still cross in $\tau$ and $\sigma < \sigma'$ witnessed by uncrossing strand $(a,c)$ and $(b,d)$ in $\sigma'$ to form strands $(a,b)$ and $(c,d)$ in $\sigma$. It must be a covering relation because we have a graded poset.

If $i=a$, strand $a+1$ crosses with strand $(a,c)$ in $\tau'$ implies that $\tau'(a+1)>c$ in cyclic order. Thus, strand $a+1$ must also cross with the strand $(a,b)$ in $\tau$. Furthermore, $\sigma<\sigma'$ is witnessed by uncrossing strand $(i+1,c)$ and strand $(b,d)$ in $\sigma'$ to form strand $(i+1, b)$ and $(c,d)$ in $\sigma$. And it must be a covering relation because we have a graded poset. This case is illustrated in Figure \ref{f:comm diagram}.

The $i=b-1$ case is similar.

\begin{figure}
    \centering
    \begin{tikzpicture}
        \coordinate (A) at (-3,0);
        \coordinate (B) at (3,0);
        \coordinate (C) at (-3,-3.5);
        \coordinate (D) at (3,-3.5);
        
        \draw (A) circle (1);
        \draw (B) circle (1);
        \draw (C) circle (1);
        \draw (D) circle (1);

        \draw [shift={(-3,0)},thick] (.707,.707) to[out=270,in=90] (.707,-.707);
        \draw [shift={(-3,0)},thick] (-.707,.707) to[out=270,in=90] (-.707,-.707);
        \draw [shift={(-3,0)},thick] (.866,.5) to[out=210,in=10] (-1,0);

         \draw (A) ++(.866,.5) node [right,scale=.8] {$i+1$};
         \draw (A) ++(-1,0) node [left,scale=.8] {$\tau(i+1)$};
         \draw (A) ++(.707,.707) node [above,scale=.8] {$a$};
         \draw (A) ++(.707,-.707) node [below,scale=.8] {$b$};
         \draw (A) ++(-.707,-.707) node [below,scale=.8] {$c$};
         \draw (A) ++(-.707,.707) node [above,scale=.8] {$d$};

         \draw [shift={(3,0)},thick] (.707,.707) to[out=225,in=45] (-.707,-.707);
        \draw [shift={(3,0)},thick] (-.707,.707) to[out=-45,in=135] (.707,-.707);
        \draw [shift={(3,0)},thick] (.866,.5) to[out=210,in=10] (-1,0);

         \draw (B) ++(.866,.5) node [right,scale=.8] {$i+1$};
         \draw (B) ++(-1,0) node [left,scale=.8] {$\tau(i+1)$};
         \draw (B) ++(.707,.707) node [above,scale=.8] {$a$};
         \draw (B) ++(.707,-.707) node [below,scale=.8] {$b$};
         \draw (B) ++(-.707,-.707) node [below,scale=.8] {$c$};
         \draw (B) ++(-.707,.707) node [above,scale=.8] {$d$};

         \draw [shift={(-3,-3.5)},thick] (.707,.707) to[out=225,in=0] (-1,0);
        \draw [shift={(-3,-3.5)},thick] (-.707,.707) to[out=270,in=90] (-.707,-.707);
        \draw [shift={(-3,-3.5)},thick] (.866,.5) to[out=235,in=90] (.707,-.707);

         \draw (C) ++(.866,.5) node [right,scale=.8] {$i+1$};
         \draw (C) ++(-1,0) node [left,scale=.8] {$\tau(i+1)$};
         \draw (C) ++(.707,.707) node [above,scale=.8] {$a$};
         \draw (C) ++(.707,-.707) node [below,scale=.8] {$b$};
         \draw (C) ++(-.707,-.707) node [below,scale=.8] {$c$};
         \draw (C) ++(-.707,.707) node [above,scale=.8] {$d$};

         \draw [shift={(3,-3.5)},thick] (.707,.707) to[out=225,in=0] (-1,0);
        \draw [shift={(3,-3.5)},thick] (-.707,.707) to[out=-45,in=135] (.707,-.707);
        \draw [shift={(3,-3.5)},thick] (.866,.5) to[out=210,in=45] (-.707,-.707);

         \draw (D) ++(.866,.5) node [right,scale=.8] {$i+1$};
         \draw (D) ++(-1,0) node [left,scale=.8] {$\tau(i+1)$};
         \draw (D) ++(.707,.707) node [above,scale=.8] {$a$};
         \draw (D) ++(.707,-.707) node [below,scale=.8] {$b$};
         \draw (D) ++(-.707,-.707) node [below,scale=.8] {$c$};
         \draw (D) ++(-.707,.707) node [above,scale=.8] {$d$};

         \draw (-4.5,1) node {\fbox{$\tau$}};
         \draw (1.5,1) node {\fbox{$\tau'$}};
         \draw (-4.5,-2.5) node {\fbox{$\sigma$}};
         \draw (1.5,-2.5) node {\fbox{$\sigma'$}};

    \end{tikzpicture}
    \caption{In Case 1 with $a=i$, $\sigma <\sigma'$ is witnessed by uncrossing strand $(i+1,c)$ and strand $(b,d)$ in $\sigma'$ to form strands $(i+1, b)$ and $(c,d)$ in $\sigma$.}
    \label{f:comm diagram}
\end{figure}

\textbf{Case 2:} Since $\tau(i)<b$, strand $i$ must also cross strand $(a,b)$ in $\tau$. Furthermore, $\sigma<\sigma'$ is witnessed by resolving the crossing between strands $(i,c)$ and $(b,d)$ in $\sigma'$ to form strands $(i,b)$ and $(c,d)$ in $\sigma$. This is a covering relation because we have a graded poset.
\end{proof}

We will show that under the hypotheses of Lemma \ref{good i}, $\sigma\lessdot \sigma'$ is a regular pair if and only if $\tau\lessdot \tau'$ is. This will allow us to extend regularity from $i$-crossing pairs to all pairs. To do this, we construct a morphism``gluing'' the $\psi$ morphisms from Proposition \ref{p:uncrossing adjacent} associated to the two $i$-crossing pairs. Notice that since both are $i$-crossing pairs, the formulae for both $\psi$ are given by $Y\mapsto u_i(a).Y$. Thus, it is relatively easy to define the glued morphism with correct source and target. The following lemma shows that a certain pair of rational functions agree on the intersection of their domains of definition. It will be used to show that inverses of $\psi$ glue.

\begin{lem}\label{pluckers of good i}
Suppose the hypothesis of Lemma \ref{good i} holds. 
\begin{enumerate}[label=(\roman*)]
    \item Under the hypotheses of Lemma \ref{good i}(i), $I_{i+1}(\tau)=I_{i+1}(\tau')$.
    \item Under the hypotheses of Lemma \ref{good i}(ii), $\Delta_{I_{i+1}'(\tau)}=\Delta_{I_{i+1}(\tau)} \cdot \frac{\Delta_{I_{i+1}'}(\tau')}{\Delta_{I_{i+1}}(\tau')}$ as regular functions on $\mathring{\chi}_{\tau'}$ that do not vanish on a dense open set of $\mathring{\chi}_{\tau'}$. Thus, $\frac{\Delta_{I_{i+1}}(\tau')}{\Delta_{I_{i+1}'}(\tau')}=\frac{\Delta_{I_{i+1}}(\tau)}{\Delta_{I_{i+1}'}(\tau)}$ as nonzero rational functions on $\mathring{\chi}_{\tau'}$.
\end{enumerate}

\end{lem}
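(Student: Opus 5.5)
Both parts can be computed directly from the description of necklaces in Lemma~\ref{pairing to necklace}. Recall that $I_{i+1}(\cdot)+1$ is obtained by taking, for every strand other than the one through $i+1$, the endpoint reached first when moving clockwise from $i+1$. The pairings $\tau$ and $\tau'$ agree on every strand except the two strands on $\{a,b,c,d\}$. So only those two strands can contribute differently to $I_{i+1}(\tau)$ and $I_{i+1}(\tau')$, and I will compare just those contributions.

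For (i), assume $a\le i\le b-1$, so that $i+1\in(a,b]$ cyclically. Moving clockwise from $i+1$, the four special endpoints occur in the order $b,c,d,a$.
\begin{itemize}
\item In $\tau$, the strands are $(a,b)$ and $(c,d)$, which contribute $b$ and $c$.
\item In $\tau'$, the strands are $(b,d)$ and $(a,c)$, which again contribute $b$ and $c$.
\item In the boundary case $i+1=b$, the strand through $b$ is discarded in both pairings. The remaining special strand, $(c,d)$ in $\tau$ and $(a,c)$ in $\tau'$, contributes $c$ in both cases.
\end{itemize}
Hence $I_{i+1}(\tau)=I_{i+1}(\tau')$. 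I expect this to be a short bookkeeping check; the case $i=a$ needs only the observation that $i+1=a+1$ still lies strictly after $a$.

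For (ii), assume $d<i\le a-1$. Moving clockwise from $i+1$, the special endpoints occur in the order $a,b,c,d$.
\begin{itemize}
\item In $\tau$, the strands $(a,b)$ and $(c,d)$ contribute $a$ and $c$.
\item In $\tau'$, the strands $(b,d)$ and $(a,c)$ contribute $b$ and $a$.
\end{itemize}
The strands through $i$ and $i+1$ should be handled using the hypotheses $a<\tau(i)<b$ and $c\le\tau(i+1)<d$. Writing $K$ for the common part, this gives
\[I_{i+1}(\tau)=K\cup\{i+1,c-1\},\qquad I_{i+1}(\tau')=K\cup\{i+1,b-1\},\]
where $i+1\in I_{i+1}$ in both cases by Remark~\ref{r: no i,i-1}. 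It follows that
\[I'_{i+1}(\tau)=K\cup\{i,c-1\},\qquad I'_{i+1}(\tau')=K\cup\{i,b-1\}.\]
The claimed identity $\Delta_{I'_{i+1}(\tau)}\Delta_{I_{i+1}(\tau')}=\Delta_{I_{i+1}(\tau)}\Delta_{I'_{i+1}(\tau')}$ is then exactly two of the three terms of the three-term Pl\"ucker relation for $K$ together with the indices $i,i+1,b-1,c-1$. The remaining term is $\Delta_{K\cup\{i,i+1\}}\Delta_{K\cup\{b-1,c-1\}}$, so the identity holds on $\chi_{\tau'}$ provided one of these two factors vanishes there.

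The main obstacle is showing that this third term vanishes on $\chi_{\tau'}$. I would first try to show that $\Delta_{K\cup\{i,i+1\}}$ vanishes, either by a Lemma~\ref{vanishing lem 1}-type argument or by a rank argument. The rank argument uses the fact that on $\Xo_{\tau'}$ the vector $v_i$ lies in $\spann\{v_{i+1},\dots,v_{f'(i)}\}$, and the columns indexed by $K$ meet this window in a way that should force a linear dependence. Failing that, I would compare $K\cup\{b-1,c-1\}$ with a necklace set $I_j(\tau')$ in the order $<_j$ for a suitable $j$.

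Once the identity is established, the nonvanishing statements follow from earlier results. $\Delta_{I_{i+1}(\tau')}$ is nowhere zero on $\Xo_{\tau'}$, being a necklace Pl\"ucker. $\Delta_{I'_{i+1}(\tau')}$ is nonzero on the image of $\psi$ in Proposition~\ref{p:uncrossing adjacent}, which is dense. By the identity, $\Delta_{I'_{i+1}(\tau)}$ is then nonzero exactly where $\Delta_{I_{i+1}(\tau)}$ and $\Delta_{I'_{i+1}(\tau')}$ are both nonzero, which is a dense open set. Since $\Xo_{\tau'}$ is irreducible by Theorem~\ref{Main1}.1, dividing the identity by $\Delta_{I_{i+1}(\tau')}\Delta_{I'_{i+1}(\tau)}$ gives the stated equality of nonzero rational functions.
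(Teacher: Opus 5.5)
You correctly identify the one term that must vanish in part (ii), but you never prove that it vanishes, and that is the whole content of (ii). Part (i) is fine and is the paper's argument.

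\textbf{The gap in (ii).} Your necklace bookkeeping is right. With $I_{i+1}(\tau')=K\cup\{i+1,b-1\}$ and $I_{i+1}(\tau)=K\cup\{i+1,c-1\}$, the three-term relation for the cyclically ordered quadruple $i,i+1,b-1,c-1$ reads
\[\Delta_{K\cup\{i,b-1\}}\Delta_{K\cup\{i+1,c-1\}}=\Delta_{K\cup\{i,i+1\}}\Delta_{K\cup\{b-1,c-1\}}+\Delta_{K\cup\{i,c-1\}}\Delta_{K\cup\{i+1,b-1\}}.\]
So the claimed identity is equivalent to the vanishing of the middle product on $\chi_{\tau'}$. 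You leave this vanishing as ``the main obstacle'' with tentative strategies, so (ii) is not proved as written.

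Two points show the missing idea has not been located yet:
\begin{itemize}
\item The hypothesis $a<\tau(i)<b$ plays no role in your necklace computation, since the strands through $i$ and $i+1$ are common to $\tau$ and $\tau'$. It is needed exactly at this vanishing step.
\item Your rank argument does not work from the single relation $v_i\in\operatorname{span}\{v_{i+1},\dots,v_{f'(i)}\}$. The set $K\cup\{i+1\}$ need not contain the whole window $[i+1,f'(i)]$, for instance when strands are nested inside $(i+1,\tau(i))$. You need the full rank bound on $[i,\tau(i)-1]$, which is the necklace condition at $i$.
\end{itemize}

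\textbf{How to fill it.} Since $\tau'(i)=\tau(i)\neq i+1$, we have $I_i(\tau')=I_{i+1}(\tau')\setminus\{\tau(i)-1\}\cup\{i\}$. Also $\tau(i)-1\in K$, because it lies in $I_{i+1}(\tau')$ and is neither $i+1$ nor $b-1$. Hence
\[K\cup\{i,i+1\}=I_i(\tau')\setminus\{b-1\}\cup\{\tau(i)-1\}.\]
This set is obtained from $I_i(\tau')$ by replacing $b-1$ with $\tau(i)-1<_i b-1$; this is where $\tau(i)<b$ is used. So it is strictly below $I_i(\tau')$ in $\le_i$, and $\Delta_{K\cup\{i,i+1\}}$ vanishes on $\Pi_{f_{\tau'}}\supseteq\chi_{\tau'}$, by the same mechanism as Lemma \ref{vanishing lem 1}.

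\textbf{Comparison with the paper.} With this inserted, your route is a valid alternative. The paper instead uses the column relation of Lemma \ref{recover a}, appends the columns of $I_{i+1}(\tau)\setminus\{i+1\}$, and kills the extra terms by comparison with $I_{i+2}(\tau')$. Your route gives the identity directly on all of $\chi_{\tau'}$ from a single Pl\"ucker relation.

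\textbf{A smaller omission.} In the density step you use, without justification, that $\Delta_{I_{i+1}(\tau)}$ is not identically zero on $\mathring{\chi}_{\tau'}$. This follows because $\Delta_{I_{i+1}(\tau)}$ is nowhere zero on $\mathring{\chi}_\tau$, and $\mathring{\chi}_\tau\subset\chi_{\tau'}=\overline{\mathring{\chi}_{\tau'}}$ by Theorem \ref{Main1}.5, with $\mathring{\chi}_{\tau'}$ irreducible.
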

\begin{proof}

\textbf{Case 1:} We use Lemma \ref{pairing to necklace} to show the equality. The strands in $\tau$ and $\tau'$ are identical except at vertices $a,b,c,d$, where in $\tau$ we have strands $(a,b),(c,d)$ and in $\tau'$ we have strands $(a,c),(b,d)$. If $a+1\leq i+1<b$, strand $i+1$ is disjoint from the vertices $a,b,c,d$. So the cyclically smaller elements among the two strands are $b$ and $c$ for both $\tau$ and $\tau'$. If $i+1=b$, then we only need to consider the contribution from $(c,d)$ in $\tau'$ and $(a,c)$ in $\tau$, which are both $c$. So $I_{i+1}(\tau)=I_{i+1}(\tau').$

\textbf{Case 2:} Fix any $X\in \mathring{\chi}_{\tau'}$ represented by a matrix with columns given by $v_1,\cdots,v_{2n}$. Let $f'$ denote the electrical permutation for $\tau'$. Strand $i$ and $i+1$ cross in $\tau'$, so $f(i)<f(i+1)$. By Lemma \ref{recover a}, $\lambda=\frac{\Delta_{I_{i+1}'(\tau')}(X)}{\Delta_{I_{i+1}(\tau')}(X)}$ is the unique number such that $v_i-\lambda v_{i+1}\in \spann\{v_{i+2},\cdots, v_{f(i)}\}$.

Now, by Lemma \ref{pairing to necklace}, we have $I_{i+1}(\tau)=I_{i+1}(\tau')\setminus \{b-1\}\bigcup \{c-1\}$. Since $f(i)<f(i+1)$ implies $i+1\in I_{i+1}(\tau')$, we also have $I_{i+2}(\tau')=I_{i+1}(\tau')\setminus\{i+1\}\bigcup \{ f(i+1)\}$. Combining the above, we have \begin{align*}
    I_{i+1}(\tau)\setminus\{i+1\}\bigcup \{j\}&=I_{i+1}(\tau')\setminus\{i+1,b-1\}\bigcup \{j,c-1\}\\&=I_{i+2}(\tau')\setminus\{b-1, f(i+1)\}\bigcup \{j,c-1\}.
\end{align*} 
Now, since $f(i+1)=\tau(i+1)-1\geq c-1\geq i+2$ and for each $i+2\leq j\leq f(i)=\tau(i)-1< b-1$, we have $I_{i+1}(\tau)\setminus\{i+1\}\bigcup \{j\}<_{i+2} I_{i+2}(\tau')$. So $\Delta_{I_{i+1}(\tau)\setminus\{i+1\}\bigcup \{j\}}=0$ on $\mathring{\chi}_{\tau'}$ for any $i+2\leq j\leq f(i)$.

Now, take the linear expression $v_i-\lambda v_{i+1}\in \spann\{v_{i+2},\cdots, v_{f(i)}\}$ and append columns in $I_{i+1}(\tau)\setminus\{i+1\}$ to form minors. Every term on the RHS is zero. So we have $\lambda \Delta_{I_{i+1}(\tau)} = \Delta_{I_{i+1}'(\tau)}$ on $\mathring{\chi}_{\tau'}$. This gives a factorization $\Delta_{I_{i+1}'(\tau)}=\Delta_{I_{i+1}(\tau)} \cdot \frac{\Delta_{I_{i+1}'}(\tau')}{\Delta_{I_{i+1}}(\tau')}$ as regular functions on $\mathring{\chi}_{\tau'}$

Let $V$ be the hypersurface in the Grassmannian given by the vanishing of $\Delta_{I_{i+1}(\tau)}$. If $V$ contains $\mathring{\chi}_{\tau'}$, it must also contain its Zariski closure, which we showed to contain $\Xo_\tau$ in Theorem \ref{Main1}.5. This is a contradiction because $V$ has empty intersection with $\Xo_\tau$. Also, we showed in Theorem \ref{Main1}.1 that $\Xo_{\tau'}$ is irreducible. So $V\bigcap\mathring{\chi}_{\tau'} $ has codimension at least $1$. Thus, $\Delta_{I_{i+1}(\tau)}$ does not vanish on a dense open subset of $\mathring{\chi}_{\tau'} $.

The rational function $\lambda$ does not vanish on a dense open set by Proposition \ref{p:uncrossing adjacent} and Lemma \ref{recover a}. So $\Delta_{I_{i+1}'(\tau)}=\lambda \Delta_{I_{i+1}(\tau)}$ does not vanish on a dense open set and we have an equality of nonzero rational functions $\frac{\Delta_{I_{i+1}}(\tau')}{\Delta_{I_{i+1}'}(\tau')}=\frac{1}{\lambda}=\frac{\Delta_{I_{i+1}}(\tau)}{\Delta_{I_{i+1}'}(\tau)}$ on $\mathring{\chi}_{\tau'}$.
\end{proof}

\begin{lem}\label{equiv of pairs}
Suppose the hypothesis of Lemma \ref{good i} holds. Then, we have an open embedding of algebraic varieties
\begin{align*}
    \phi: (\mathring{\chi}_{\sigma} \bigsqcup \mathring{\chi}_{\sigma'})\times\mathbb{C}^* &\longrightarrow \Xo_\tau\bigsqcup \mathring{\chi}_{\tau'}\\
    (Y,a) &\longmapsto u_i(a).Y
\end{align*}
onto the dense open subscheme of the target given by the union of the nonvanishing of $\Delta_{I_{i+1}'(\tau')}$ and the nonvanishing of $\Delta_{I_{i+1}'(\tau)}$. 

The image contains the generic point of $\Xo_\tau$. And the inverse is given by $X\mapsto (u_i(-a).X,a)$, where $a=\frac{\Delta_{I_{i+1}}(\tau')}{\Delta_{I_{i+1}'}(\tau')}=\frac{\Delta_{I_{i+1}}(\tau)}{\Delta_{I_{i+1}'}(\tau)}$.
\end{lem}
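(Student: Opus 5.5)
The map $\phi$ will be obtained by gluing two copies of the open embedding of Proposition \ref{p:uncrossing adjacent strong}, one for each of the $i$-crossing pairs $\sigma\lessdot\tau$ and $\sigma'\lessdot\tau'$ supplied by Lemma \ref{good i}. The common formula $(Y,a)\mapsto u_i(a).Y$ is the same in both cases, so defining $\phi$ is easy; the real work is to check that the two inverses agree and that the image is open with the stated description.

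\textbf{Step 1: $\phi$ is well defined.} On $\Xo_{\sigma'}\times\mathbb{C}^*$, Proposition \ref{p:uncrossing adjacent} gives $u_i(a).Y\in\Xo_{\tau'}$. On $\Xo_\sigma\times\mathbb{C}^*$ it gives $u_i(a).Y\in\Xo_\tau$. Lemma \ref{good i} gives $\sigma\lessdot\sigma'$, so $\Xo_\sigma\sqcup\Xo_{\sigma'}$ is an open subscheme of $\chi_{\sigma'}$. Hence $\phi$ is a single morphism $\chi_{\sigma'}\cap(\Xo_\sigma\sqcup\Xo_{\sigma'})\times\mathbb{C}^*\to\Gr(n-1,2n)$. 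It lands in $\chi_{\tau'}$: the source is reduced (Theorem \ref{Main1}.3), and the image of a dense subset lies in $\chi_{\tau'}$, which is closed. Set-theoretically the image lies in $\Xo_\tau\sqcup\Xo_{\tau'}$, an open subscheme of $\chi_{\tau'}$ because $\tau\lessdot\tau'$.

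\textbf{Step 2: the inverse.} Let $W_1=\{\Delta_{I'_{i+1}(\tau')}\neq0\}$ and $W_2=\{\Delta_{I'_{i+1}(\tau)}\neq0\}$, both inside $\Xo_\tau\sqcup\Xo_{\tau'}$. On $W_1$ put $a_1=\Delta_{I_{i+1}(\tau')}/\Delta_{I'_{i+1}(\tau')}$, and on $W_2$ put $a_2=\Delta_{I_{i+1}(\tau)}/\Delta_{I'_{i+1}(\tau)}$. I must show $a_1=a_2$ on $W_1\cap W_2$, so that they glue to a regular function $a$ on $W=W_1\cup W_2$.
\begin{itemize}
\item In case (i), Lemma \ref{pluckers of good i}(i) says the index sets coincide, so $a_1=a_2$ and $W_1=W_2$.
\item In case (ii), Lemma \ref{pluckers of good i}(ii) gives $a_1=a_2$ as rational functions on $\Xo_{\tau'}$. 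The target $\Xo_\tau\sqcup\Xo_{\tau'}$ is reduced and irreducible, and its open set $\Xo_{\tau'}$ is dense, so the regular functions $a_1,a_2$ agree on $W_1\cap W_2$.
\end{itemize}
Define $\phi^{-1}(X)=(u_i(-a).X,a)$. To see it lands in the source, use Proposition \ref{p:uncrossing adjacent strong} for the pair $\sigma'\lessdot\tau'$. Its image contains $\Xo_{\sigma'}$ and is exactly the nonvanishing locus of $\Delta_{I'_{i+1}(\tau')}$ in $\Xo_{\sigma'}\sqcup\Xo_{\tau'}$. Points of $W$ in $\Xo_{\tau'}$ are therefore handled by that proposition. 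For points of $W$ in $\Xo_\tau$, use the proposition for $\sigma\lessdot\tau$ restricted to $a\neq0$: a point $X\in\Xo_\tau$ with $\Delta_{I'_{i+1}(\tau)}(X)\neq0$ is $u_i(a).Y$ with $Y\in\Xo_\sigma$ and $a=a_2\neq0$.

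The composites $\phi\circ\phi^{-1}$ and $\phi^{-1}\circ\phi$ are the identity on each piece by the computations in Proposition \ref{p:uncrossing adjacent}. Since the pieces cover a reduced scheme and the maps are given by the same formula, the identities hold globally. Thus $\phi$ is an isomorphism onto the open set $W$.

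\textbf{Step 3: the generic point.} By Lemma \ref{vanishing lem 2}(ii) applied to $\sigma\lessdot\tau$, every point of $\Xo_\sigma$ is sent, for $a\in\mathbb{C}^*$, to a point of $\Xo_\tau$ where $\Delta_{I'_{i+1}(\tau)}\neq0$. So $W_2\cap\Xo_\tau$ is nonempty and open in the irreducible $\Xo_\tau$ (Theorem \ref{Main1}.1), hence contains its generic point.

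The main obstacle is Step 2 in case (ii). There the agreement of the two rational expressions for $a$ is known only generically on $\Xo_{\tau'}$, and it must be extended across $\Xo_\tau$, where $\Delta_{I_{i+1}(\tau')}$ vanishes. I will handle this by the reducedness and irreducibility argument above: on $W_1\cap W_2$ both $a_1$ and $a_2$ are honest regular functions that agree on a dense open subset, so they agree everywhere.
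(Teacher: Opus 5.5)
Your plan (glue the two copies of Proposition~\ref{p:uncrossing adjacent} for the $i$-crossing pairs $\sigma\lessdot\tau$ and $\sigma'\lessdot\tau'$, and check that the two formulas for $a$ agree) is the paper's. However, Step 2 omits the step that actually carries the lemma.

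The image of $\phi$ is $(W_2\cap\Xo_\tau)\cup(W_1\cap\Xo_{\tau'})$, and the lemma asserts that this equals $W=W_1\cup W_2$. When you check that $\phi^{-1}$ lands in the source, you say that points of $W\cap\Xo_{\tau'}$ are handled by the proposition for $\sigma'\lessdot\tau'$, and points of $W\cap\Xo_\tau$ by the one for $\sigma\lessdot\tau$. But those propositions only cover $W_1\cap\Xo_{\tau'}$ and $W_2\cap\Xo_\tau$. Two kinds of points are left uncovered: a point of $\Xo_{\tau'}$ with $\Delta_{I'_{i+1}(\tau)}\neq 0=\Delta_{I'_{i+1}(\tau')}$, and a point of $\Xo_\tau$ with $\Delta_{I'_{i+1}(\tau')}\neq 0=\Delta_{I'_{i+1}(\tau)}$. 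For such a point, nothing tells you that $u_i(-a).X\in\Xo_\sigma\sqcup\Xo_{\sigma'}$. So you have proved neither surjectivity onto $W$ nor openness of the image.

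This is not a formality. $\Xo_\tau$ is closed in $\Xo_\tau\sqcup\Xo_{\tau'}$, so the union of a locally closed piece of $\Xo_\tau$ with an open piece of $\Xo_{\tau'}$ need not be open. That openness is exactly what the paper's proof spends its effort on. Your check that $a_1$ and $a_2$ agree on $W_1\cap W_2$ is fine, but it says nothing about points lying in only one of $W_1$, $W_2$.

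The missing facts are $W_2\cap\Xo_{\tau'}\subseteq W_1$ and $W_1\cap\Xo_\tau\subseteq W_2$.
In case (i) they are trivial, since $W_1=W_2$.
In case (ii), apply your density argument to the cross-multiplied identity rather than to the ratios. The identity $\Delta_{I'_{i+1}(\tau)}\Delta_{I_{i+1}(\tau')}=\Delta_{I_{i+1}(\tau)}\Delta_{I'_{i+1}(\tau')}$ holds on $\Xo_{\tau'}$ by Lemma~\ref{pluckers of good i}(ii). It therefore holds on all of $\chi_{\tau'}$, because $\chi_{\tau'}$ is reduced and $\Xo_{\tau'}$ is dense in it.
On $\Xo_{\tau'}$ the necklace coordinate $\Delta_{I_{i+1}(\tau')}$ is nowhere zero, which gives the first containment.
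On $\Xo_\tau$ the necklace coordinate $\Delta_{I_{i+1}(\tau)}$ is nowhere zero, which gives the second.

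Since, as you observe, $\Delta_{I_{i+1}(\tau')}$ vanishes on $\Xo_\tau$, the same identity shows that $W_1\cap\Xo_\tau=\emptyset$ in case (ii). Hence $W_1\cap W_2\subseteq\Xo_{\tau'}$, and the ``obstacle'' you flag (extending the agreement of $a_1,a_2$ across $\Xo_\tau$) never actually arises. With these two containments added, the rest of your argument goes through.
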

\begin{proof}
The target $\mathring{\chi}_{f}\bigsqcup \mathring{\chi}_{f'}$ has complement in $\chi_{f'}$ given by $\sqcup\Xo_g$ where the disjoint union is over all $g<f'$ with $g\not=f$. We want to show that the complement equals the union of closed electroid varieties ${\chi}_g$ for $g\lessdot f'$ and $g\not=f$. {For any $\Xo_g$ with $g<f'$, there is some chain $g\lessdot f_k\lessdot f_{k-1} \lessdot\dots \lessdot f_1 \lessdot f_0\lessdot f'$. By Proposition \ref{2 coverings}, there exists some $h\not=f$ such that $f_1\lessdot h \lessdot f'$,} and thus $\Xo_g\subset \chi_h$. The other direction of containment is clear. We conclude that the target $\mathring{\chi}_{f}\bigsqcup \mathring{\chi}_{f'}$ is dense open in $\chi_{f'}$ and thus is a variety. Similarly, the source is a dense open subset of $\chi_{\sigma'}\times\CC^*$ and thus is also a variety. 

By Proposition \ref{p:uncrossing adjacent}, we can respectively define $\mathring{\chi}_{\sigma}\times\mathbb{C}^* \longrightarrow \mathring{\chi}_{\tau} $ and $\mathring{\chi}_{\sigma'}\times\mathbb{C}^* \longrightarrow \mathring{\chi}_{\tau'} $ both by $(Y,a) \longmapsto u_i(a).Y$. And the images are given respectively by $\{\Delta_{I_{i+1}'(\tau)}\not=0\}\bigcap \mathring{\chi}_{\tau}$ and $\{\Delta_{I_{i+1}'(\tau')}\not=0\}\bigcap \mathring{\chi}_{\tau'}$. We show that under the hypotheses of Lemma \ref{good i}(i) or \ref{good i}(ii) their union is a dense open set in $\Xo_\tau\bigsqcup \mathring{\chi}_{\tau'}$.

Under the hypothesis of Lemma \ref{good i}(i), Lemma \ref{pluckers of good i}(i) gives ${I_{i+1}'(\tau')}={I_{i+1}'(\tau)}$. So the set theoretic union of the two images is $\{\Delta_{I_{i+1}'(\tau')}\not=0\}\bigcap (\mathring{\chi}_{\tau}\bigsqcup \mathring{\chi}_{\tau'})$, which is open in $\mathring{\chi}_{\tau}\bigsqcup \mathring{\chi}_{\tau'}$.

Under the hypothesis of Lemma \ref{good i}(ii), Lemma \ref{pluckers of good i}(ii) gives $$\Delta_{I_{i+1}'(\tau)}\Delta_{I_{i+1}(\tau')}=\Delta_{I_{i+1}(\tau)} \Delta_{I_{i+1}'(\tau')}$$ on $\mathring{\chi}_{\tau'}$, and thus on its Zariski closure $\chi_{\tau'}$ by Theorem \ref{Main1}.(5).

On $\mathring{\chi}_{\tau}$, $\Delta_{I_{i+1}(\tau)}$ is everywhere nonzero because it is in the Grassmann necklace. So we have a factorization $\Delta_{I_{i+1}'(\tau)}\Delta_{I_{i+1}(\tau')}\frac{1}{\Delta_{I_{i+1}(\tau)}}=  \Delta_{I_{i+1}'(\tau')} $ as regular functions on $\mathring{\chi}_{\tau}$. Thus, the nonvanishing of $\Delta_{I_{i+1}'(\tau)}$ in $\mathring{\chi}_{\tau}$ contains the nonvanishing of $\Delta_{I_{i+1}'(\tau')}$ in $\mathring{\chi}_{\tau}$. We conclude that $$\{\Delta_{I_{i+1}'(\tau)}\not=0\}\bigcap \mathring{\chi}_{\tau}=(\{\Delta_{I_{i+1}'(\tau)}\not=0\}\bigcup \{\Delta_{I_{i+1}'(\tau')}\not=0\})\bigcap \mathring{\chi}_{\tau}.$$

On $\mathring{\chi}_{\tau'}$, $\Delta_{I_{i+1}(\tau')}$ is everywhere nonzero because it is in the Grassmann necklace. So we have a factorization $\Delta_{I_{i+1}'(\tau)}=\Delta_{I_{i+1}(\tau)}  \Delta_{I_{i+1}'(\tau')} \frac{1}{\Delta_{I_{i+1}(\tau')}}$ as regular functions on $\mathring{\chi}_{\tau'}$. Thus, the nonvanishing of $\Delta_{I_{i+1}'(\tau')}$ in $\mathring{\chi}_{\tau'}$ contains the nonvanishing of $\Delta_{I_{i+1}'(\tau)}$ in $\mathring{\chi}_{\tau'}$. We conclude that $$\{\Delta_{I_{i+1}'(\tau')}\not=0\}\bigcap \mathring{\chi}_{\tau'}=(\{\Delta_{I_{i+1}'(\tau)}\not=0\}\bigcup \{\Delta_{I_{i+1}'(\tau')}\not=0\})\bigcap \mathring{\chi}_{\tau'}.$$

By the above two equalities, the set theoretic union of the two images equals $$(\{\Delta_{I_{i+1}'(\tau)}\not=0\}\bigcup \{\Delta_{I_{i+1}'(\tau')}\not=0\})\bigcap (\Xo_\tau \bigsqcup\mathring{\chi}_{\tau'} )$$ and this is indeed open in $\Xo_\tau \bigsqcup\mathring{\chi}_{\tau'}$.

We may a priori define the morphism $\phi:(Y,a) \longmapsto u_i(a).Y
$ from $(\mathring{\chi}_{\sigma} \bigsqcup \mathring{\chi}_{\sigma'})\times\mathbb{C}^* $ to the Grassmannian given by $(Y,a)\mapsto u_i(a).Y$. The set theoretic image of $\phi$ is the union of the set theoretic images of the morphisms $\mathring{\chi}_{\sigma}\times\mathbb{C}^* \longrightarrow \mathring{\chi}_{\tau} $ and $\mathring{\chi}_{\sigma'}\times\mathbb{C}^* \longrightarrow \mathring{\chi}_{\tau'} $, which we showed is an open set in $\Xo_\tau \bigsqcup\mathring{\chi}_{\tau'}$ given by the union of the nonvanishing of $\Delta_{I_{i+1}'(\tau)}$ and $\Delta_{I_{i+1}'(\tau')}$. In particular, this open set contains the image of $\mathring{\chi}_{\sigma'}\times\mathbb{C}^* \longrightarrow \mathring{\chi}_{\tau'} $, which is dense in $\mathring{\chi}_{\tau'}$ by Proposition \ref{p:uncrossing adjacent}, and thus dense in $\chi_{\tau'}$ by Theorem \ref{Main1}.5. So the Zariski closure of the set theoretic image of $\phi$ is the closed positroid variety $\chi_{\tau'}$. The source is reduced, so the scheme theoretic image of $\phi$ is $\chi_{\tau'}$. Thus, $\phi$ factors through the closed embedding of $\chi_{\tau'}$ into the Grassmannian. Since the image lands in the open set $ \Xo_\tau\bigsqcup \mathring{\chi}_{\tau'}$ of $\chi_{\tau'}$, we have constructed the morphism $\phi$.

To show that $\phi$ is an open embedding onto its image, we need to define a rational inverse for $\phi$, i.e., the last claim of the lemma. And this follows from the inverse formula for $\psi$ from Proposition \ref{p:uncrossing adjacent} and the fact that the two rational functions agree with each other in from Lemma \ref{pluckers of good i}.
\end{proof}

\begin{cor}\label{regular pairs}   
Under the hypotheses of Lemma \ref{good i}, $\sigma\lessdot \sigma'$ is a regular pair iff and only if $\tau\lessdot \tau'$ is a regular pair.
\end{cor}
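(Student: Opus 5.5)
We derive the corollary from Lemma \ref{equiv of pairs}, which supplies an open embedding
\[\phi:(\Xo_\sigma\sqcup\Xo_{\sigma'})\times\CC^*\longrightarrow \Xo_\tau\sqcup\Xo_{\tau'}.\]
Its source is an open subscheme of $\chi_{\sigma'}\times\CC^*$ and its target is an open subscheme of $\chi_{\tau'}$. Regularity at a point is local, and it is invariant under isomorphism of open neighborhoods. So the plan is to show that the generic point of $\Xo_\sigma\times\CC^*$ is carried by $\phi$ to the generic point of $\Xo_\tau$. It then follows that $\chi_{\tau'}$ is regular at the generic point of $\Xo_\tau$ if and only if $\chi_{\sigma'}\times\CC^*$ is regular at the generic point of $\Xo_\sigma\times\CC^*$.

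\textbf{Step 1: generic points correspond.} By Proposition \ref{p:uncrossing adjacent}, $\phi$ restricts to an open embedding $\Xo_\sigma\times\CC^*\to\Xo_\tau$ with dense image. By Theorem \ref{Main1}.1, $\Xo_\sigma\times\CC^*$ and $\Xo_\tau$ are irreducible, so $\phi$ maps the generic point $\eta$ of $\Xo_\sigma\times\CC^*$ to the generic point of $\Xo_\tau$. Lemma \ref{equiv of pairs} also states explicitly that the image contains the generic point of $\Xo_\tau$. Since $\phi$ is an isomorphism onto an open subscheme $W\subseteq\Xo_\tau\sqcup\Xo_{\tau'}$, we get
\[\mathcal{O}_{\chi_{\tau'},\phi(\eta)}=\mathcal{O}_{W,\phi(\eta)}\cong\mathcal{O}_{(\Xo_\sigma\sqcup\Xo_{\sigma'})\times\CC^*,\eta}.\]
Because $\Xo_\sigma\sqcup\Xo_{\sigma'}$ is open in $\chi_{\sigma'}$ (shown in the proof of Lemma \ref{equiv of pairs}), the right-hand side is the local ring of $\chi_{\sigma'}\times\CC^*$ at $\eta$.

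\textbf{Step 2: strip off the $\CC^*$ factor.} We need: $\chi_{\sigma'}\times\CC^*$ is regular at the generic point of $\Xo_\sigma\times\CC^*$ if and only if $\chi_{\sigma'}$ is regular at the generic point $\xi$ of $\Xo_\sigma$. Projection to $\chi_{\sigma'}$ is smooth, with fibers $\CC^*$, so regularity ascends along it. For descent, one can either use faithful flatness of $\mathcal{O}_{\chi_{\sigma'},\xi}\to\mathcal{O}_{\chi_{\sigma'}\times\CC^*,\eta}$ together with the standard fact that regularity descends under flat local maps, or argue via singular loci. The singular locus of $\chi_{\sigma'}\times\CC^*$ is $\mathrm{Sing}(\chi_{\sigma'})\times\CC^*$, which is closed. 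It contains $\Xo_\sigma\times\CC^*$ if and only if $\mathrm{Sing}(\chi_{\sigma'})$ contains $\Xo_\sigma$. Either route gives the equivalence with the definition of a regular pair for $\sigma\lessdot\sigma'$.

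\textbf{Main obstacle.} The substantive work is in Lemma \ref{equiv of pairs}: gluing the two maps from Proposition \ref{p:uncrossing adjacent} and checking that the glued map is an open embedding whose image meets $\Xo_\tau$. Granting that lemma, the only point needing care is that "regular along the generic point" is an intrinsic local-ring condition, so it transfers across the open embedding $\phi$ and across the factor $\CC^*$. Both are standard.
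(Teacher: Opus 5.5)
Your proposal is correct and matches the paper's approach. The paper states this corollary without a separate proof, as an immediate consequence of Lemma~\ref{equiv of pairs}: the open embedding $\phi$ identifies the local ring of $\chi_{\tau'}$ at the generic point of $\Xo_\tau$ with the local ring of $\chi_{\sigma'}\times\CC^*$ at the generic point of $\Xo_\sigma\times\CC^*$. Your Step~2, which removes the $\CC^*$ factor using smooth ascent and flat descent of regularity, spells out a detail the paper leaves implicit.
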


\begin{lem}\label{boundary genericly smooth}
Let $\tau\lessdot\tau'$. Then, the closed electroid variety $\chi_{\tau'}$ is regular along the generic point of the open electroid variety $\Xo_{\tau}$.  
\end{lem}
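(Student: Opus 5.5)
The plan is an induction that reduces an arbitrary covering relation $\tau\lessdot\tau'$ to an $i$-crossing pair, using Corollary~\ref{regular pairs} to move regularity along the way. The base case is already in hand. If $\tau\lessdot\tau'$ is an $i$-crossing pair, Proposition~\ref{p:uncrossing adjacent strong} gives an open embedding $\Xo_\tau\times\mathbb{C}\hookrightarrow \Xo_\tau\sqcup\Xo_{\tau'}\subset\chi_{\tau'}$ whose image contains $\Xo_\tau$. The source is smooth by Theorem~\ref{Main1}.1, so $\chi_{\tau'}$ is regular along all of $\Xo_\tau$, and in particular along its generic point.

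For a general covering relation, $\tau$ is obtained from $\tau'$ by uncrossing strands $b,c$ with endpoints $a<b<c<d$ in cyclic order, as in Lemma~\ref{good i}. I would induct on a measure of non-adjacency, for instance the cyclic gap between the two uncrossed strands. The more robust choice is to induct on $c(\tau')$ and descend to the smaller pair $\sigma\lessdot\sigma'$, since $c(\sigma')=c(\tau')-1$.

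The key combinatorial step is to show that whenever $\tau\lessdot\tau'$ is not itself an $i$-crossing pair, some $i$ satisfies the hypotheses of Lemma~\ref{good i}: strands $i,i+1$ cross in $\tau'$, $\tau(i)\neq i+1$, and either (i) or (ii) holds. Given such an $i$, Corollary~\ref{regular pairs} says $\tau\lessdot\tau'$ is regular iff $\sigma\lessdot\sigma'$ is, and the inductive hypothesis applies to $\sigma\lessdot\sigma'$. The recursion eventually reaches either an $i$-crossing pair, handled by the base case, or a pair with $c(\sigma')$ small.

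I expect finding the good $i$ to be the main obstacle. My first attempt is the arc from $a$ to $b$ (case (i)). If $b\neq a+1$, some position $a\le i\le b-1$ is a candidate. Take the innermost structure among the strands with an endpoint strictly between $a$ and $b$, choosing $i$ so that strands $i$ and $i+1$ cross in $\tau'$. The minimality trick from the proof of Theorem~\ref{Main1}.1 should produce adjacent strands that cross, or that do not cross, as needed. If every such adjacent pair fails to cross, I would switch to the other arcs, from $b$ to $c$, from $c$ to $d$, or from $d$ to $a$, using the cyclic symmetry of Subsection~\ref{cyclic symmetry}, which permutes electroid varieties compatibly with rotating pairings.

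If $a,b$ are adjacent and $c,d$ are adjacent, so that all arcs are short, I would look to case (ii): strands crossing into the region between $d$ and $a$ whose partners land in $(a,b)$ and $[c,d)$. In the residual configurations where no adjacent crossing exists near the four endpoints, I would argue that the pair already is an $i$-crossing pair, or a rotation of one. It also has to be checked that $\sigma,\sigma'$ are genuinely pairings with $\sigma\lessdot\sigma'$, but Lemma~\ref{good i} already guarantees this.
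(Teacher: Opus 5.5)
Your base case for $i$-crossing pairs is fine, but the inductive step has a genuine gap: the combinatorial claim it rests on is false. You only use Corollary~\ref{regular pairs} downward, with the current pair in the role of $(\tau,\tau')$. That requires strands $i,i+1$ crossing in $\tau'$ with $\tau(i)\neq i+1$.

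Here is a counterexample. Take $n=4$, $\tau'=(1,5),(2,6),(3,4),(7,8)$ and $\tau=(1,2),(5,6),(3,4),(7,8)$. Then $a=1,b=2,c=5,d=6$, with $c(\tau')=1$ and $c(\tau)=0$.

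\begin{itemize}
\item This is not an $i$-crossing pair. The only candidates are $i=1,5$, and $\tau'(1)=5\neq\tau(2)$, $\tau'(5)=1\neq\tau(6)$. Being $i$-crossing for some $i$ is rotation invariant, so it is not a rotation of one either.
\item The only adjacent crossing strands in $\tau'$ occur at $i=1$ and $i=5$. There $\tau(i)=i+1$, so Lemma~\ref{good i} admits no downward move at all, whatever labeling or case you use.
\end{itemize}

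Your recursion therefore stalls at exactly a ``pair with $c(\sigma')$ small'' for which you have no base case. Descent on $c(\tau')$ cannot work.

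The missing idea is to apply Corollary~\ref{regular pairs} upward as well, and to induct on the length of the cyclic arc from $d$ to $a$ rather than on $c(\tau')$. The paper examines $\tau'(a-1)$, which cannot lie in $(b,c)$ because $\tau\lessdot\tau'$ is a cover.

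\begin{itemize}
\item If $\tau'(a-1)\in(c,d)$ or $(d,a)$, strands $a-1,a$ cross in neither $\tau$ nor $\tau'$. Crossing them in both produces a larger covering pair with $a-1$ in place of $a$. Lemma~\ref{good i}(i) applies to it with $i=a-1$, and its $(\sigma,\sigma')$ is your original pair.
\item If $\tau'(a-1)\in(a,b)$, you instead uncross strands $a-1,a$, using Lemma~\ref{good i}(ii).
\end{itemize}

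In every case $a$ is replaced by $a-1$, so the arc from $d$ to $a$ shrinks. Once $d$ and $a$ are adjacent, the pair is a $d$-crossing pair and your base case applies.

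In the example above you must first go up, to $(2,8),(1,7),(5,6),(3,4)\lessdot(5,8),(1,7),(2,6),(3,4)$. Only then do you come down, to the $6$-crossing pair $(2,7),(1,8),(5,6),(3,4)\lessdot(5,7),(1,8),(2,6),(3,4)$.
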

\begin{proof}
By Proposition \ref{p:uncrossing adjacent strong}, $i$-crossing pairs are regular. By Corollary \ref{regular pairs}, it suffices to show that moves from Lemma \ref{good i} connect any pair $\tau\lessdot \tau'$ to an $i$-crossing pair. This is done by a combinatorial algorithm which explicitly constructs such a path. 

The strategy of the algorithm is to apply a ``regular pair''-preserving move from Lemma \ref{good i} to iteratively decrease the distance between strands $a$ and $d$ until the strands are adjacent. At each iteration, $\tau'(a-1)$ lies in one of three intervals, $(a,b),(c,d),(d,a)$, each dictating a different move. Since $\tau\lessdot\tau'$ is a covering relation, $\tau'(a-1)$ cannot be in the cyclic interval $(b,c)$. 

\begin{itemize}
    \item Case 1: If $c<\tau'(a-1)<d$, the hypotheses of Lemma \ref{good i}(i) are satisfied; cross strands $a$ and $a-1$.
    \item Case 2: If $d<\tau'(a-1)<a$, the hypotheses of Lemma \ref{good i}(i) are satisfied; cross strands $a$ and $a-1$.
    \item Case 3: If $a<\tau'(a-1)<b$, the hypotheses of Lemma \ref{good i}(ii) are satisfied; uncross strands $a$ and $a-1$.
\end{itemize}
\end{proof}

\begin{figure}[h]
\centering
\begin{tikzpicture}

\coordinate (A1) at (-6,0);
\coordinate (A2) at (-2,0);
\coordinate (A3) at (2,0);
\coordinate (A4) at (6,0);

\coordinate (B1) at (-6,-3);
\coordinate (B2) at (-2,-3);
\coordinate (B3) at (2,-3);
\coordinate (B4) at (6,-3);

\draw (A1) circle (1);
\draw (B1) circle (1);
\draw (A2) circle (1);
\draw (B2) circle (1);
\draw (A3) circle (1);
\draw (B3) circle (1);
\draw (A4) circle (1);
\draw (B4) circle (1);

\draw (-7.8,.8) node {\fbox{$\tau'$}};
\draw (-7.8,-2.2) node {\fbox{$\tau$}};

\draw (7.5,.8) node {\fbox{$\sigma'$}};
\draw (7.5,-2.2) node {\fbox{$\sigma$}};

\draw (A1) ++(-.866,.5) node [left, red ,scale=.8] {$a_1$};
\draw (A1) ++(-.5,.866) node [above, red,scale=.8] {$b_1$};
\draw (A1) ++(.866,-.5) node [right, red,scale=.8] {$c_1$};
\draw (A1) ++(0,-1) node [below, red,scale=.8] {$d_1$};
\draw (A1) ++(-1,0) node [left, scale = .8] {$a_1-1$};

\draw (B1) ++(-.866,.5) node [left, red,scale=.8] {$a_1$};
\draw (B1) ++(-.5,.866) node [above, red,scale=.8] {$b_1$};
\draw (B1) ++(.866,-.5) node [right, red,scale=.8] {$c_1$};
\draw (B1) ++(0,-1) node [below, red,scale=.8] {$d_1$};
\draw (B1) ++(-1,0) node [left, scale = .8] {$a_1-1$};

\draw (A2) ++(-1,0) node [left, red, scale = .8] {$a_2$};
\draw (A2) ++(-.5,.866) node [above, red, scale = .8] {$b_2$};
\draw (A2) ++(.866,-.5) node [right, red, scale = .8] {$c_2$};
\draw (A2) ++(0,-1) node [below, red, scale = .8] {$d_2$};
\draw (A2) ++(-.866,-.5) node [left, scale = .8] {$a_2-1$};

\draw (B2) ++(-1,0) node [left, red, scale = .8] {$a_2$};
\draw (B2) ++(-.5,.866) node [above, red, scale = .8] {$b_2$};
\draw (B2) ++(.866,-.5) node [right, red, scale = .8] {$c_2$};
\draw (B2) ++(0,-1) node [below, red, scale = .8] {$d_2$};
\draw (B2) ++(-.866,-.5) node [left, scale = .8] {$a_2-1$};

\draw (A3) ++(-.866,-.5) node [left, red ,scale=.8] {$a_3$};
\draw (A3) ++(-.5,.866) node [above, red,scale=.8] {$b_3$};
\draw (A3) ++(.866,-.5) node [right, red,scale=.8] {$c_3$};
\draw (A3) ++(0,-1) node [below, red,scale=.8] {$d_3$};
\draw (A3) ++(-.5,-.866) node [left, scale = .8] {$a_3-1$};

\draw (B3) ++(-.866,-.5) node [left, red,scale=.8] {$a_3$};
\draw (B3) ++(-.5,.866) node [above, red,scale=.8] {$b_3$};
\draw (B3) ++(.866,-.5) node [right, red,scale=.8] {$c_3$};
\draw (B3) ++(0,-1) node [below, red,scale=.8] {$d_3$};
\draw (B3) ++(-.5,-.866) node [left, scale = .8] {$a_3-1$};

\draw (A4) ++(-.5,-.866) node [left, red, scale = .8] {$a_4$};
\draw (A4) ++(-.5,.866) node [above, red, scale = .8] {$b_4$};
\draw (A4) ++(.866,-.5) node [right, red, scale = .8] {$c_4$};
\draw (A4) ++(0,-1) node [below, red, scale = .8] {$d_4$};

\draw (B4) ++(-.5,-.866) node [left, red, scale = .8] {$a_4$};
\draw (B4) ++(-.5,.866) node [above, red, scale = .8] {$b_4$};
\draw (B4) ++(.866,-.5) node [right, red, scale = .8] {$c_4$};
\draw (B4) ++(0,-1) node [below, red, scale = .8] {$d_4$};

\draw[-Stealth, thick] (-4.3,0) -- (-3.7,0);
\draw[-Stealth, thick] (-4.3,-3) -- (-3.7,-3);
\draw[-Stealth, thick] (-.3,0) -- (.3,0);
\draw[-Stealth, thick] (-.3,-3) -- (.3,-3);
\draw[-Stealth, thick]  (3.7,0) -- (4.3,0);
\draw[-Stealth, thick]  (3.7,-3) -- (4.3,-3);

\draw [shift={(-6,0)},thick] (-.866,-.5) to[out=30,in=60] (-.5,-.866);
\draw [shift={(-6,0)},thick] (-1,0) to[out=0,in=120] (.5,-.866);
\draw [shift={(-6,0)},red,thick] (-.866,.5) to[out=-30,in=150] (.866,-.5);
\draw [shift={(-6,0)},red,thick] (0,-1) to[out=90,in=-60] (-.5,.866);

\draw [shift={(-6,-3)}, thick] (-.866,-.5) to[out=30,in=60] (-.5,-.866);
\draw [shift={(-6,-3)},thick] (-1,0) to[out=0,in=120] (.5,-.866);
\draw [shift={(-6,-3)},red,thick] (-.866,.5) to[out=-30,in=-60] (-.5,.866);
\draw [shift={(-6,-3)},red,thick] (0,-1) to[out=90,in=150] (.866,-.5);

\draw [shift={(-2,0)},thick] (-.866,-.5) to[out=30,in=60] (-.5,-.866);
\draw [shift={(-2,0)},thick] (-.866,.5) to[out=-20,in=110] (.5,-.866);
\draw [shift={(-2,0)},red,thick] (-1,0) to[out=0,in=150] (.866,-.5);
\draw [shift={(-2,0)},red,thick] (0,-1) to[out=90,in=-60] (-.5,.866);

\draw [shift={(-2,-3)}, thick] (-.866,-.5) to[out=30,in=60] (-.5,-.866);
\draw [shift={(-2,-3)},thick] (-.866,.5) to[out=-30,in=120] (.5,-.866);
\draw [shift={(-2,-3)},red,thick] (-1,0) to[out=0,in=-60] (-.5,.866);
\draw [shift={(-2,-3)},red,thick] (0,-1) to[out=90,in=150] (.866,-.5);

\draw [shift={(2,0)},thick] (-1,0) to[out=0,in=60] (-.5,-.866);
\draw [shift={(2,0)},thick] (-.866,.5) to[out=-20,in=110] (.5,-.866);
\draw [shift={(2,0)},red,thick] (-.866,-.5) to[out=30,in=150] (.866,-.5);
\draw [shift={(2,0)},red,thick] (0,-1) to[out=90,in=-60] (-.5,.866);

\draw [shift={(2,-3)}, thick] (-1,0) to[out=0,in=60] (-.5,-.866);
\draw [shift={(2,-3)},thick] (-.866,.5) to[out=-30,in=120] (.5,-.866);
\draw [shift={(2,-3)},red,thick] (-.866,-.5) to[out=30,in=-60] (-.5,.866);
\draw [shift={(2,-3)},red,thick] (0,-1) to[out=90,in=150] (.866,-.5);

\draw [shift={(6,0)},thick] (-1,0) to[out=0,in=30] (-.866,-.5);
\draw [shift={(6,0)},thick] (-.866,.5) to[out=-30,in=120] (.5,-.866);
\draw [shift={(6,0)},red,thick] (-.5,-.866) to[out=60,in=150] (.866,-.5);
\draw [shift={(6,0)},red,thick] (0,-1) to[out=90,in=-60] (-.5,.866);

\draw [shift={(6,-3)}, thick] (-1,0) to[out=0,in=30] (-.866,-.5);
\draw [shift={(6,-3)},thick] (-.866,.5) to[out=-30,in=120] (.5,-.866);
\draw [shift={(6,-3)},red,thick] (-.5,-.866) to[out=60,in=-60] (-.5,.866);
\draw [shift={(6,-3)},red,thick] (0,-1) to[out=90,in=150] (.866,-.5);

\end{tikzpicture}
\caption{Under the algorithm, $\tau\lessdot\tau'$ is transformed into an $i$-crossing pair $\sigma\lessdot\sigma'$.}
\label{f:reg_pairs}
\end{figure}

\begin{eg}
    We use the pair $\tau\lessdot\tau'$ depicted in Figure \ref{f:reg_pairs} to illustrate the algorithm transforming $\tau\lessdot\tau'$ into an $i$-crossing pair. For the first move, we are in case 1 and cross strands $a_1,a_1-1$. For the second move, we are in case 2 and cross strands $a_2,a_2-1$. The final move is an application of case 3 and we uncross strands $a_3,a_3-1$.
    \label{e:reg_pair}
\end{eg}

\begin{proof} [Proof of Theorem \ref{Main1}, R1]
By Theorem \ref{cell decomp}, $\chi_f=\bigcup_{g\geq f} \chi_g^\circ$ as a set. Theorem \ref{Main1}.1 gives that each boundary cell has the expected dimension given by the difference in the number of crossings. Combining the fact that open electroid varieties are smooth together with Lemma \ref{boundary genericly smooth} yields the desired result.
\end{proof}

\section{A Splicing Isomorphism and Dense Tori}\label{section:A Splicing Isomorphism and Dense Tori}

Recall that Lam's grove measurement map embeds the totally positive torus $(\mathbb{R}_{>0})^{c(\tau)}$ into the electroid space $\chi_n$ as the  totally nonnegative part of the open electroid variety $(\mathring{\chi}_\tau)_{\geq0}$ \cite{Lam2018}. It is a priori far from true that the grove measurement map extends to an embedding from the algebraic torus to the open electroid variety. First, a map from an alegbraic torus can be injective on the positive real torus but still does not define an embedding into its scheme-theoretic image. An example is given by the $2$-to-$1$ cover $\mathbb{C}^*\to \mathbb{C}^*$ sending $z\mapsto z^2$. Another potential issue is that the image of the algebraic torus might not still be contained in the same Zariski open set. Indeed, to lie in the open electroid variety requires the nonvanishing of certain Pl\"ucker coordinates, which are polynomials in the edge weight variables. A function on the algebraic torus can certainly have a nontrivial vanishing locus, while maintaining nonzero on the positive part. An example is given by the function $\mathbb{C}^*\to \mathbb{C}$ sending $z\mapsto z^2+1$, where the image of the positive torus $\mathbb{R}_{>0}$ is contained in $\mathbb{C}^*$, but the image of the algebraic torus is not. As a corollary of results in this section, we show that neither of the above happens and the grove measurement map indeed extends to embed the alegbraic torus inside the corresponding open electroid variety.

There is an analogous problem for positroid varieties of whether the boundary measurement map extends to embed the algebraic torus inside an open positroid variety. A positive answer is given by the twist automorphism by Muller and Speyer \cite{Muller_2017}. They provide an automorphism on the open positroid variety, called the twist automorphism, that makes the boundary measurement map an inverse of the face Pl\"ucker map. We take a different approach.

A notable consequence of results in this section is that it lays the foundation for proving the conjecture by Thomas Lam that electroid varieties are \textit{positive geometries}. Positive geometries are defined by Arkani-Hamed, Bai, and Lam \cite{AHBL17} as a triple $(X,X_{\geq0},\Omega_X)$, where $X$ is a projective algebraic variety, $X_{\geq0}$ is a semi-algebraic subset of $X(\mathbb{R})$, and $\Omega_X$ is a top-degree
meromorphic differential form (the canonical form) on $X$. We refer the interested reader to \cite{lam2022invitationpositivegeometries} for more details. Positroid varieties are examples of positive geometries \cite{AHBCGPT16} and their canonical forms are given by pushing-forward the canonical form from the algebraic torus via the boundary measurement map. Our result that the grove measurement map embeds an algebraic torus makes it possible to write out a meromorphic form on the open electroid variety in the exact same way -- we simply take the canonical form on the algebraic torus and push it forward to the open electroid variety. An important feature of the canonical form $\Omega_X$ is that it is required to only have logarithmic poles along the ``boundary" of $X$. In the case of electroid varieties, the boundary of a closed electroid variety $\chi_\tau$ is expected to be the complement of the open electroid variety $\mathring{\chi}_{\tau}$, and thus is expected to be exactly the poles of the canonical form $\Omega_{\chi_\tau}$. In order for this conjecture to hold, the torus embedding must have image contained in $\mathring{\chi}_{\tau}$, because the pushforward form does not have poles along the image of the embedded torus. Our result says that this is indeed the case.

Instead of proving the torus embedding directly, we prove a stronger result that resembles the splicing of open positroids in \cite{gorsky2025,gorsky2024}. Fix $n_1$ and $n_2$. We prove Theorem \ref{Main1}.6 and construct an isomorphism between the product of two electroid spaces, $\chi_{n_1}\times \chi_{n_2}$, and a single higher dimensional electroid variety $\chi_{\mu}\subset\chi_{n_1+n_2}$. As a consequence, we can prove Theorem \ref{Main1}.7 and construct an embedding of an algebraic torus into open electroid varieties that restricts to the grove measurement map on totally nonnegative points. Before stating the theorem, we make precise how to index each relevant combinatorial object. 

For objects associated to $\chi_{n_1}$ and $\chi_{n_1+n_2}$, the indexing follows that used throughout the paper. For objects associated to $\chi_{n_2}$, we have to shift the indexing. We index the Grassmannian $\Gr(n_2-1,2n_2)$ by $\{2n_1+1,2n_1+2, \dots, 2n_1+2n_2\}$; correspondingly, the Pl\"ucker coordinates are indexed by subsets of $\{2n_1+1,2n_1+2, \dots, 2n_1+2n_2\}$. Pairings associated to $\chi_{n_2}$ are thought of as pairings on $\{2n_1+1,2n_1+2, \dots, 2n_1+2n_2\}$. Similarly, the cactus networks have boundary vertices labeled by $v_{n_1+1}, v_{n_1+2}, \dots, v_{n_1+n_2}$, where, as before, vertices $v_i$ sits between points $2i-1$ and $2i$ of the pairing. Lastly, a non-crossing partition $\sigma$ on $\{n_1+1, n_1+2, \dots, n_1+n_2\}$ induces dual non-crossing partitions $(\overline{\sigma},\tilde{\sigma})$ where $\overline{\sigma}$ partitions $\{2n_1+1,2n_1+3, \dots, 2n_1+2n_2-1\}$ and $\tilde{\sigma}$ partitions $\{2n_1,2n_1+2, \dots, 2n_1+2n_2\}$. Notice that the re-indexing of the above objects is consistent with each other in the sense that the relationships between the indexing of the Pl\"ucker coordinates, the cactus network, the pairings, and the noncrossing partitions are the same as earlier in the paper. For example, Lemma \ref{pairing to necklace} still holds under the new indexing.

\begin{defn}
Let $\Gamma_A$ and $\Gamma_B$ be cactus networks in $\chi_{n_1}$ and $\chi_{n_2}$, respectively. Define $\Gamma = \Gamma_A\sqcup \Gamma_B$ to be the cactus network on $n_1+n_2$ boundary vertices formed by taking the disjoint union of $\Gamma_A$ and $\Gamma_B$.
\end{defn}

\begin{figure}[h]
\centering
\begin{tikzpicture}
    \coordinate (A) at (-6,0);
    \coordinate (B) at (-2,0);
    \coordinate (C) at (4,0);
    \coordinate (v1) at (-2.866,.5);
    \coordinate (v2) at (-2+.866,.5);
    \coordinate (v3) at (-2,-1);
    \coordinate (v4) at (-6,-1);
    \coordinate (v5) at (-6,1);

    \coordinate (a1) at (4,1.5);
    \coordinate (a2) at (4.882,1.214);
    \coordinate (a3) at (5.427,.464);
    \coordinate (a4) at (5.427,-.464);
    \coordinate (a5) at (4.882,-1.214);
    \coordinate (a6) at (4,-1.5);
    \coordinate (a7) at (4-.882,-1.214);
    \coordinate (a8) at (4-1.427,-.464);
    \coordinate (a9) at (4-1.427,.464);
    \coordinate (a10) at (4-.882,1.214);

    \coordinate (b1) at (4.464, 1.427);  
    \coordinate (b2) at (5.500, 0.000);   
    \coordinate (b3) at (4.464, -1.427);  
    \coordinate (b4) at (4-1.214, -0.882); 
    \coordinate (b5) at (4-1.214, 0.882); 

    \draw (A) circle (1);
    \draw (B) circle (1);
    \draw (C) circle (1.5);

    \fill (v1) circle (2pt);
    \fill (v2) circle (2pt);
    \fill (v3) circle (2pt);
    \fill (v4) circle (2pt);
    \fill (v5) circle (2pt);

    \fill (b1) circle (2pt);
    \fill (b2) circle (2pt);
    \fill (b3) circle (2pt);
    \fill (b4) circle (2pt);
    \fill (b5) circle (2pt);
    
    \draw [thick] (v1) to[out=310,in=110] node[midway,left] {$b$}(v3);
    \draw [thick] (v2) to[out=230,in=70] node[midway,right] {$a$}(v3);
    \draw [thick] (v4) to[out=90,in=270] node[midway, right] {$c$}(v5);

    \draw (v1) node [above, scale=.8] {$v_1$};
    \draw (v2) node [above, scale=.8] {$v_2$};
    \draw (v3) node [below, scale=.8] {$v_3$};
    \draw (v4) node [below, scale=.8] {$v_4$};
    \draw (v5) node [above, scale=.8] {$v_5$};

    \draw (-7.8,1) node {\fbox{$\Gamma_B$}};
    \draw (-3.8,1) node {\fbox{$\Gamma_A$}};
    \draw (1.3,1) node {\fbox{$\Gamma_A\sqcup \Gamma_B$}};

    \draw (b1) node [above, scale=.8] {$v_1$};
    \draw (b2) node [right, scale=.8] {$v_2$};
    \draw (b3) node [below, scale=.8] {$v_3$};
    \draw (b4) node [left, scale=.8] {$v_4$};
    \draw (b5) node [left, scale=.8] {$v_5$};

    \draw [thick] (b1) to[out=290,in=75] node[midway,left=-.5mm] {$b$}(b3);
    \draw [thick] (b3) to[out=60,in=205] node[midway,right=-.5mm] {$a$} (b2);    
    \draw [thick] (b4) to[out=95,in=265]node[midway,right=-.5mm] {$c$} (b5);

\end{tikzpicture}
\caption{The electrical networks $\Gamma_A$ and $\Gamma_B$, and their disjoint union $\Gamma_A\sqcup\Gamma_B$.}
\label{f:splicing networks}
\end{figure}

\begin{defn} Let $\tau_1,\tau_2$ be pairings on $[2n_1]$ and $\{2n_1+1,2n_1+2\dots, 2n_1+2n_2\}$, respectively. Define $\tau=\tau_1\sqcup \tau_2$ to be the pairing on $[2n_1+2n_2]$ formed by combining $\tau_1$ and $\tau_2$. That is, $\tau(i):=\tau_1(i)$ for $i\leq 2n_1$ and $\tau(i):=\tau_2(i)$ for $i> 2n_1$.
\end{defn}
\begin{defn} Let $\sigma_1,\sigma_2$ be non-crossing partitions on $[2n_1]$ and $\{2n_1+1,2n_1+2\dots, 2n_1+2n_2\}$, respectively. Define $\sigma=\sigma_1\sqcup \sigma_2$ to be the non-crossing partition on $[2n_1+2n_2]$ formed by the disjoint union of parts of $\sigma_1$ and $\sigma_2$.
\end{defn}

Observe that if $\tau_1,\tau_2$ are medial pairings associated to $\Gamma_A, \Gamma_B$, then $\tau_1\sqcup \tau_2$ is the medial pairing associated to $\Gamma_A\sqcup \Gamma_B$. If a grove in $\Gamma_A$ and a grove in $\Gamma_B$ respectively give non-crossing partitions $\sigma_1$ and $\sigma_2$, then the grove in $\Gamma_A\sqcup \Gamma_B$ formed by their disjoint union give non-crossing partition $\sigma_1\sqcup \sigma_2$. 

Let $\mu_1$ and $\mu_2$  be the maximal pairings on $[2n_1]$ and $\{2n_1+1,2n_1+2\dots, 2n_1+2n_2\}$, respectively. That is, $\chi_{\mu_1}=\chi_{n_1}$ and $\chi_{\mu_2}=\chi_{n_2}$. Let $\mu=\mu_1\sqcup \mu_2$. Observe that the lower order ideal of $\mu$ is isomorphic as a poset to the product of the lower order ideal of $\mu_1$ and the lower order ideal of $\mu_2$. This is because covering relations are given by resolving crossings, and we can resolve crossings of $\mu$ separately in each part. Namely, the lower order ideal of $\mu$ consist of exactly the elements of the form $\tau_1\sqcup \tau_2$ for $\tau_1\leq\mu_1, \tau_2\leq\mu_2$. {In fact, the isomorphism of posets can be extended to an isomorphism of stratifications of closed electroid varieties. Theorem \ref{Main1}.6 can be stated precisely as follows.}

\begin{thm}\label{splicing}
Using the embedding above, there is an isomorphism $\Phi: \chi_{n_1}\times\chi_{n_2}\xrightarrow{\sim} \chi_{\mu}$ between a product of electroid spaces and a closed electroid variety.   
\begin{enumerate}
    \item The map $\Phi$ is given by the following. For any $I\in \binom{[2n_1+2n_2]}{n_1+n_2-1}$, \begin{equation*}
  \Delta_I(\Phi(A,B)) = 
   \begin{cases}
    \Delta_{I_1\setminus\{2n_1\}}(A)\Delta_{I_2}(B) \hspace{1in}& |I_1|={n_1}, 2n_1\in I_1\\
    \Delta_{I_1}(A)\Delta_{I_2\setminus\{2n_1+2n_2\}}(B) & |I_2|={n_2}, 2n_1+2n_2\in I_2\\
    0 &\text{else.}
\end{cases} 
\end{equation*}
where the decomposition $I=I_1\sqcup I_2$ is given by
\begin{align*}
I_1&=I\cap \{1,2,\dots, 2n_1\},\\
I_2&=I\cap \{2n_1+1,2n_1+2, \dots, 2n_1+2n_2\}.
\end{align*}

\item The inverse isomorphism $\Psi$ of $\Phi$ is uniquely determined by the following. Write $\Psi(C)=(A,B)$. Then, under the same embedding to Grassmannians as in $(1)$, we have that
$$\frac{\Delta_{I}(A)}{\Delta_{J}(A)}= \frac{\Delta_{{I}\cup\{2n_1+2n_2\}\cup K}(C)}{\Delta_{{J}\cup\{2n_1+2n_2\}\cup K}(C)}$$
for any subsets $I,J$ of size $n_1-1$ in $\{1, 2,\dots, 2n_1\}$ and any subset $K$ of size $n_2-1$ in $\{2n_1+1, 2n_1+2, \dots, 2n_1+2n_2\}$. Similarly,  we have that
$$\frac{\Delta_{I}(B)}{\Delta_{J}(B)}= \frac{\Delta_{I\cup\{2n_1\}\cup K}(C)}{\Delta_{J\cup\{2n_1\}\cup K}(C)}$$
for any subsets $I,J$ of size $n_2-1$ in $\{2n_1+1, 2n_1+2, \dots, 2n_1+2n_2\}$, and any subset $K$ of size $n_1-1$ in $\{1, 2,\dots , 2n_1\}$.

\item The above isomorphisms $\Phi$ and $\Psi$, restricted to totally nonnegative points, map each pair of cactus networks $(\Gamma_A, \Gamma_B)$ up to electrical equivalence to their disjoint union $\Gamma_A\sqcup\Gamma_B$ up to electrical equivalence.

\item For any pairings $\tau_1,\tau_2$ on $[2n_1]$ and $\{2n_1+1,2n_1+2, \dots, 2n_1+2n_2\}$ respectively, $\Phi$ and $\Psi$ restrict to isomorphisms $\chi_{\tau_1}\times \chi_{\tau_2} \simeq \chi_{\tau_1\sqcup \tau_2}$ and $\Xo_{\tau_1}\times \Xo_{\tau_2} \simeq \Xo_{\tau_1\sqcup \tau_2}$.

\end{enumerate}

\label{t:splicing}
\end{thm}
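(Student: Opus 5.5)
Work throughout with the $\Gr(n-1,2n)$ convention. The plan is to construct $\Phi$ explicitly at the level of matrices, check it on totally nonnegative points, and then use density together with the irreducibility and reducedness already proved (Theorem \ref{Main1}.1, \ref{Main1}.3, \ref{Main1}.5) to upgrade set-theoretic statements to scheme-theoretic ones.

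\textbf{Matrix model and the formula in (1).} Let $A$ be represented by an $(n_1-1)\times 2n_1$ matrix and $B$ by an $(n_2-1)\times 2n_2$ matrix. Form the $(n_1+n_2-1)\times(2n_1+2n_2)$ matrix $C$ with $A$ and $B$ placed block-diagonally. Add one extra row, with a $1$ in columns $2n_1$ and $2n_1+2n_2$ (with suitable signs) and zeros elsewhere. Expanding a maximal minor of $C$ along this extra row gives exactly the three cases in (1). Hence $\Phi$ is a well-defined morphism $\Gr(n_1-1,2n_1)\times\Gr(n_2-1,2n_2)\to\Gr(n_1+n_2-1,2n_1+2n_2)$, namely a Segre-type map composed with a linear map. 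Since the extra vector is not in the span of the other rows, $\Phi$ is a closed embedding on the Grassmannians: one recovers $A$ and $B$ from the ratios in (2) by fixing $K$ so that the relevant minors are nonzero. Because the Plücker coordinates of $A$ are proportional to those of $C$ with $\{2n_1+2n_2\}\cup K$ appended, the formulas in (2) define the inverse $\Psi$ on $\Phi$'s image, independently of the choice of $I,J,K$.

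\textbf{Nonnegative points and the image.} The key combinatorial step is to check that for cactus networks, $\Phi(i(\Gamma_A),i(\Gamma_B))=i(\Gamma_A\sqcup\Gamma_B)$. Groves of $\Gamma_A\sqcup\Gamma_B$ are exactly pairs of groves, with $L_{\sigma_1\sqcup\sigma_2}=L_{\sigma_1}L_{\sigma_2}$. So it suffices to show that $\sigma_1\sqcup\sigma_2\parallel I$ if and only if one of the first two cases of (1) holds with $\sigma_1\parallel I_1\setminus\{2n_1\}$ (resp.\ $I_1$) and $\sigma_2\parallel I_2$ (resp.\ $I_2\setminus\{2n_1+2n_2\}$). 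This follows because in $\tilde\sigma$ the even labels $2n_1$ and $2n_1+2n_2$ are the ones where the dual partitions of the two halves get merged. Concretely, the block of $\widetilde{\sigma_1\sqcup\sigma_2}$ containing $2n_1+2n_2$ (which wraps around) is the union of the block of $\tilde\sigma_1$ containing $2n_1$ and the block of $\tilde\sigma_2$ containing $2n_1+2n_2$, so exactly one of these two labels may lie outside $I$. I expect this concordance bookkeeping to be the main technical obstacle: one must get the indexing conventions of $\tilde\sigma$ (in particular $2n_1$ versus $2n_1+2n_2$ across the seam) and the signs right. This proves (3), using Theorem \ref{embedding electroid cell in Grassmannian}. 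It also shows that $\Phi$ maps $E_{\tau_1}\times E_{\tau_2}$ onto $E_{\tau_1\sqcup\tau_2}$, via the medial pairing and groves observation above and the description of the lower order ideal of $\mu$ as a product.

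\textbf{Upgrading to varieties.} Since $\Phi$ is a closed embedding and $\chi_{\tau_1}\times\chi_{\tau_2}$ is reduced and irreducible and equals the Zariski closure of $(\Xo_{\tau_1})_{\ge0}\times(\Xo_{\tau_2})_{\ge0}$ by Theorem \ref{Main1}.5, its image is the reduced closed subscheme given by the closure of $\Phi$ of the cell product. That closure is the closure of $E_{\tau_1\sqcup\tau_2}$, which is $\chi_{\tau_1\sqcup\tau_2}$, again by Theorem \ref{Main1}.5 and reducedness. Taking $\tau_i=\mu_i$ gives $\Phi:\chi_{n_1}\times\chi_{n_2}\xrightarrow{\sim}\chi_\mu$. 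The closed statement in (4) follows for all $\tau_1,\tau_2$. The open statement follows by set-theoretic subtraction: $\Xo_{\tau}=\chi_\tau\setminus\bigcup_{\tau'<\tau}\chi_{\tau'}$, and the lower order ideal of $\tau_1\sqcup\tau_2$ is the product of the lower order ideals, so $\Phi$ carries $\Xo_{\tau_1}\times\Xo_{\tau_2}$ onto $\Xo_{\tau_1\sqcup\tau_2}$ as open subschemes.
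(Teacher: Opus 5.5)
\textbf{Gap: the closed-embedding claim.} The weak point is your first paragraph. The matrix recipe does not give a morphism on $\Gr(n_1-1,2n_1)\times\Gr(n_2-1,2n_2)$, let alone a closed embedding.

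The extra row $\pm e_{2n_1}\pm e_{2n_1+2n_2}$ lies in the row span of the block-diagonal part exactly when $e_{2n_1}\in\text{rowspan}(A)$ and $e_{2n_1+2n_2}\in\text{rowspan}(B)$. So your ``linear map'' has a base locus. Moreover, recovering $A$ from the ratios in (2) needs some $K\not\ni 2n_1+2n_2$ with $\Delta_K(B)\neq 0$, i.e.\ $e_{2n_1+2n_2}\notin\text{rowspan}(B)$; symmetrically for $B$.

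Two concrete failures, with $n_1=n_2=2$:
\begin{enumerate}
\item $A=\spann(e_4)$ and $B=\spann(e_8)$ make $C$ drop rank.
\item $A=\spann(e_4)$ with either $B=\spann(e_5)$ or $B=\spann(e_5+e_8)$ gives the same row span $\spann(e_4,e_5,e_8)$. So $\Phi$ is not injective even where it is defined.
\end{enumerate}
Your justification ``since the extra vector is not in the span of the other rows'' is therefore neither true in general nor sufficient for injectivity. Your upgrading step (``since $\Phi$ is a closed embedding $\ldots$'') rests entirely on it.

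\textbf{Missing input.} You need the fact that these coloop configurations never occur on electroid spaces.
\begin{itemize}
\item If $A\in\Xo_{\tau_1}$, then $\Delta_{I_1(\tau_1)}(A)\neq 0$, and $2n_1\notin I_1(\tau_1)$ by the necklace description ($i-1,i\notin I_{i+1}$).
\item Likewise, if $B\in\Xo_{\tau_2}$, then $\Delta_{I_{2n_1+1}(\tau_2)}(B)\neq 0$ with $2n_1+2n_2\notin I_{2n_1+1}(\tau_2)$.
\end{itemize}
Since the open electroid varieties cover $\chi_{n_1}$ and $\chi_{n_2}$, this gives two things.
\begin{enumerate}
\item $C$ has full rank on all of $\chi_{n_1}\times\chi_{n_2}$.
\item Every point of the image has a neighborhood and a choice of $K,K'$ for which the ratio maps of (2) are regular. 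On $\Phi^{-1}$ of that neighborhood, namely where $\Delta_K(B)\Delta_{K'}(A)\neq0$, they are left inverses of $\Phi$.
\end{enumerate}
Together with projectivity of $\chi_{n_1}\times\chi_{n_2}$, this makes $\Phi$ a closed embedding there. These necklace facts are exactly what the paper uses to prove $\Phi$ and $\Psi$ regular.

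\textbf{Comparison after repair.} With this fix your argument is valid and somewhat more economical than the paper's. Closed embedding plus density of cells identifies the image, including all strata, and yields $\Psi$ at once. The Pl\"ucker relations for $\Phi(A,B)$ also come for free from the matrix. The paper instead defines $\Phi$ by the Pl\"ucker formula and identifies its image by a dimension count. It then builds $\Psi$ separately, checking $K$-independence on the dense nonnegative locus, and verifies both composites. Your concordance bookkeeping across the seam is correct and matches the paper's $J',J''$ argument.
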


\begin{proof}
 
\textbf{The map $\Phi$ on nonnegative points.} Recall from \cite{Lam2018} that totally nonnegative points in $\chi_n$ correspond to cactus networks modulo electrical equivalence. It is easy to see that the correspondence defined in $(3)$ on cactus networks is constant on electrical equivalence classes. Thus, $(3)$ defines a correspondence on totally nonnegative points $(\chi_{n_1})_{\geq0}\times(\chi_{n_2})_{\geq0}\simeq (\chi_{\mu})_{\geq0}$.

Let $(\Gamma_A, \Gamma_B)$ be a pair of cactus networks  with $n_1$ and $n_2$ boundary vertices, respectively. Let $\Gamma=\Gamma_A\sqcup \Gamma_B$ be the cactus network on $n_1+n_2$ boundary vertices as in (3). Let $A,B,C$ be the points in $\chi_{n_1},\chi_{n_2},\chi_{\mu}$ represented by $\Gamma_A,\Gamma_B,\Gamma$, respectively. We first show that formulae in $(1)$ and $(2)$ are satisfied under the correspondence $(A,B)\leftrightarrow C$.

Consider $\Gamma$. For $L_\sigma(\Gamma)$ to be nonzero, $\sigma$ must decompose into $\sigma = \sigma_1\sqcup \sigma_2$, where $\sigma_1$ is a partition of $\{1,2,\dots, n_1\}$ and $\sigma_2$ is a partition of $\{n_1+1, n_1+2, \dots, n_1+n_2\}$. Moreover, $2n_1$ and $2n_1+2n_2$ must be in the same block of $\tilde{\sigma}$. Thus, for $J\in\binom{[2n_1+2n_2]}{n_1+n_2-1}$ to be concordant with such $\sigma$, $J$ must contain $2n_1$ or $2n_1+2n_2$. So $\Delta_J(\Gamma)=\sum_{\sigma || J} L_{\sigma(\Gamma)}$ is nonzero only if $J$ contains $2n_1$ or $2n_1+2n_2$.

Fix $J\in\binom{[2n_1+2n_2]}{n_1+n_2-1}$ containing $2n_1$ or $2n_1+2n_2$. Let $J=J_1\sqcup J_2$, where \begin{align*}
J_1&=J\cap \{1,2, \dots,2n_1\},\\   
J_2&=J\cap \{2n_1+1,2n_1+2,\dots, 2n_1+2n_2\}.
\end{align*} 
Note that $\sigma=\sigma_1\bigsqcup \sigma_2$ being concordant with $J$ is equivalent to the existence of $J',J''$ such that
\begin{itemize}
    \item $J'$ is concordant to $\sigma_1$ and $J''$ is concordant to $\sigma_2$,
    \item $J_1\setminus \{2n_1\}\subset J'\subset J_1$ and $ J_2\setminus \{2n_1+2n_2\}\subset J''\subset J_2$.
\end{itemize}
In particular, the first condition implies that $|J'|=n_1-1$ and $|J''|=n_2-1$. Thus, since $|J|=n_1+n_2-1=|J_1|+|J_2|-1$, exactly one of the following holds
\begin{itemize}
    \item $J'=J_1\setminus \{2n_1\}$ and $J''=J_2$.
    \item $J'=J_1$ and $ J''=J_2\setminus \{2n_1+2n_2\}$.
\end{itemize}
The cardinality condition uniquely determines $J'$ and $J''$ for given $J_1\sqcup J_2$. {See Figure \ref{f:J,J',J''} for an example of equivalent concordance conditions.} 

\begin{figure}[h]
\centering
\begin{tikzpicture}
    \coordinate (A) at (6,0);
    \coordinate (B) at (2,0);
    \coordinate (C) at (-4,0);

    \coordinate (a1) at (-4,1.5);
    \coordinate (a2) at (-4+.882,1.214);
    \coordinate (a3) at (-4+1.427,.464);
    \coordinate (a4) at (-4+1.427,-.464);
    \coordinate (a5) at (-4+.882,-1.214);
    \coordinate (a6) at (-4,-1.5);
    \coordinate (a7) at (-4-.882,-1.214);
    \coordinate (a8) at (-4-1.427,-.464);
    \coordinate (a9) at (-4-1.427,.464);
    \coordinate (a10) at (-4-.882,1.214);

    \draw (A) circle (1);
    \draw (B) circle (1);
    \draw (C) circle (1.5);
    \draw [thick, dotted] (a6) -- (a10);

    \draw (A) ++(-1,0) node [xshift=-.3cm,scale=.8] {10};
    \draw (A) ++(-.5,.866) node [xshift=-.2cm,yshift=.2cm,scale=.8] {1};
    \draw (A) ++(.5,.866) node [xshift=.2cm,yshift=.2cm,scale=.8] {2};
    \draw (A) ++(1,0) node [xshift=.3cm,scale=.8] {3};
    \draw (A) ++(.5,-.866) node [xshift=.2cm,yshift=-.2cm,scale=.8] {4};
    \draw (A) ++(-.5,-.866) node [xshift=-.2cm,yshift=-.2cm,scale=.8] {5};

    \fill (A) ++(-1,0) circle (1.5pt);
    \fill (A) ++(-.5,.866) circle (1.5pt);
    \fill (A) ++(.5,.866) circle (1.5pt);
    \fill (A) ++(1,0) circle (1.5pt);
    \fill (A) ++(.5,-.866) circle (1.5pt);
    \fill (A) ++(-.5,-.866) circle (1.5pt);

    \draw (B) ++(0,-1) node [yshift = -.3cm,scale=.8] {7};
    \draw (B) ++(-1,0) node [xshift = -.3cm,scale=.8] {8};
    \draw (B) ++(0,1) node [yshift = .3cm,scale=.8] {9};
    \draw (B) ++(1,0) node [xshift = .3cm,scale=.8] {6};

    \fill (B) ++(0,-1) circle (1.5pt);
    \fill (B) ++(0,1) circle (1.5pt);
    \fill (B) ++(-1,0) circle (1.5pt);
    \fill (B) ++(1,0) circle (1.5pt);

    \draw[-Stealth, thick] (-1,0) -- (0,0);

    \fill (a1) circle (1.5pt) node[yshift =.3cm, scale=.8]{1}; 
    \fill (a2) circle (1.5pt) node[xshift = .2cm, yshift =.2cm, scale=.8]{2}; 
    \fill (a3) circle (1.5pt) node[xshift =.2cm, yshift=.1cm, scale=.8]{3}; 
    \fill (a4) circle (1.5pt) node[xshift =.2cm, yshift=-.1cm, scale=.8]{4}; 
    \fill (a5) circle (1.5pt) node[xshift = .2cm, yshift =-.2cm, scale=.8]{5}; 
    \fill (a6) circle (1.5pt) node[yshift =-.3cm, scale=.8]{6}; 
    \fill (a7) circle (1.5pt) node[xshift=-.2cm,yshift =-.2cm, scale=.8]{7}; 
    \fill (a8) circle (1.5pt) node[xshift =-.2cm, yshift=-.1cm, scale=.8]{8}; 
    \fill (a9) circle (1.5pt) node[xshift =-.2cm, yshift=.1cm, scale=.8]{9}; 
    \fill (a10) circle (1.5pt) node[xshift = -.2cm, yshift =.2cm, scale=.8]{10}; 

    \draw (a1) circle (3pt);
    \draw (a4) circle (3pt);
    \draw (a6) circle (3pt);
    \draw (a7) circle (3pt);
    \draw (B) ++(0,-1) circle (3pt);
    \draw (A) ++(-.5,.866) circle (3pt);
    \draw (A) ++(.5,-.866) circle (3pt);
    
\end{tikzpicture}
\caption{On $\chi_\mu$, calculating whether $\sigma=\sigma_1\sqcup\sigma_2$ is concordant with $\{1,4,6,7\}$ reduces to calculating whether $\sigma_1\parallel \{1,4\}$ and $\sigma_2\parallel \{7\}$.}
\label{f:J,J',J''}
\end{figure}

We can now expand $\Delta_J(C)=\Delta_J(\Gamma)$ as follows.
\begin{align*}
\Delta_J(C) &= \sum\limits_{\sigma\parallel J} L_{\sigma}(\Gamma)\\ &=\sum\limits_{\substack{\sigma_1\parallel J'\\ \sigma_2\parallel J''}} L_{\sigma_1\sqcup \sigma_2}(\Gamma)\\
&= \sum\limits_{\substack{\sigma_1\parallel J'\\ \sigma_2\parallel J''}} L_{\sigma_1}(\Gamma_A)L_{\sigma_2}(\Gamma_B)\\
&=\Big(\sum\limits_{\sigma_1\parallel J'} L_{\sigma_1}(\Gamma_A)\Big)\Big(\sum_{\sigma_2\parallel J''} L_{\sigma_2}(\Gamma_B)\Big)\\
&=\Delta_{J'}(\Gamma_A)\Delta_{J''}(\Gamma_B)\\
&= \Delta_{J'}(A)\Delta_{J''}(B).
\end{align*}
Given this, it is then straightforward to verify the formula in $(2)$ holds as well. Thus, the formulae in $(1)$ and $(2)$ hold under the correspondence $(A,B)\leftrightarrow C$.
So far, we showed that if $\Phi$ and $\Psi$ are well-defined morphisms, then their restriction to totally nonnegative points is given by the correspondence $(A,B)\leftrightarrow C$ given by disjoint union of networks.

\textbf{Well-definedness of the morphism $\Phi$.} The formula is degree-homogeneous. Fix any point $(A,B)\in\chi_{n_1}\times \chi_{n_2}$. By Theorem \ref{Main1}.5, there exists pairings $\tau_1,\tau_2$ such that $A\in \mathring{\chi}_{\tau_1}, B\in \mathring{\chi}_{\tau_2}$. Consider the Grassmann necklaces of $\tau_1,\tau_2$; by Remark \ref{r: no i,i-1}, we have $2n_1\not\in I_{1}(\tau_1)$. Thus, $\Delta_{I_{1}(\tau_1)\sqcup I_{1}(\tau_2)\sqcup \{2n_1\}}(\Phi(A,B))=\Delta_{I_{1}(\tau_1)}(A)\Delta_{I_{1}(\tau_2)}(B)\not=0$ and the formula gives a well-defined morphism $\Phi$ from $\chi_{n_1}\times \chi_{n_2}$ to the projective space. Since the source is reduced and irreducible by Theorem \ref{Main1}.1, the scheme-theoretic image is the Zariski closure of the set-theoretic image, is irreducible, and has dimension at most $\dim \chi_{n_1}+\dim \chi_{n_2}$. On the other hand, the image contains $\Phi((\chi_{n_1})_{\geq0}\times (\chi_{n_2})_{\geq0})$, which we just showed to be $(\chi_{\mu})_{\geq0}$. So the scheme theoretic image contains the Zariski closure of $(\chi_{\mu})_{\geq0}$. By Theorem \ref{Main1}.2 and Theorem \ref{Main1}.5, the Zariski closure of $(\chi_{\mu})_{\geq0}$ equals $\chi_\mu$. By Theorem \ref{Main1}.1 and Theorem \ref{Main1}.5, dimensions of closed electroid varieties can be computed by the number of crossings in the strand diagram. And we see that $\dim \chi_{n_1}+\dim \chi_{n_2}=\dim \chi_\mu$ since $\mu$ is a disjoint union of maximally crossed strand diagrams $\mu_1$ and $\mu_2$. Thus, the scheme theoretic image of $\Phi$ is $\chi_\mu$.

\textbf{Well-definedness of the inverse morphism $\Psi$.}
Write $\Psi(C)=(A,B)$. It suffices to prove well-definedness for the first component $\chi_\mu\to \chi_{n_1}$ of $\Psi$, and the argument for the second component follows similarly. First, we check that the ratios in the definition of $\Psi$ do not depend on the choice of $K$. This is an equality of rational functions on $\chi_{\mu}$; as $(\chi_{\mu})_{\geq0}$ is Zariski dense in $\chi_{\mu}$, it suffices to check equality on cactus networks.

Fix $C\in (\mathring{\chi}_{\tau})_{\geq0}$ for $\tau\leq \mu$. By the discussion above the theorem, $\tau$ decomposes as a disjoint union $\tau_1\sqcup \tau_2$. Then, $C$ can be represented by a cactus network $\Gamma$ which decomposes into disjoint cactus networks $\Gamma_A$ and $\Gamma_B$ as in $(3)$. By definition, each grove coordinate $L_\sigma$ equals $\sum \wt(F)$ where the sum is over all groves $F$ with boundary partition $\sigma$. Since $\Gamma = \Gamma_A\sqcup\Gamma_B$, the grove $F$ splits into $F=F_A\sqcup F_B$ with $F_A\subset\Gamma_A, F_B\subset\Gamma_B$. Moreover, since all nonzero grove coordinates $L_\sigma$ on $\chi_\mu$ have that $2n_1$ and $2n_1+2n_2$ are in the same block of $\tilde{\sigma}$, we can expand each $L_\sigma$ as $\sum \wt(F_A)\wt(F_B)$ where the sum is over all groves $F_A$ on $\{1,2,\dots,{2n_1}\}$ and all groves $F_B$ on $\{{2n_1}+1, {2n_1}+2,\dots, 2n_1+{2n_2}\}$ with appropriate concordance. Thus,
\begin{align*}
  \frac{\Delta_{I_1}(A)}{\Delta_{J_1}(A)}:=\frac{\Delta_{I_1\cup\{2n_1+2n_2\}\cup K}(C)}{\Delta_{J_1\cup\{2n_1+2n_2\}\cup K}(C)} &=\frac{\sum\limits_{\sigma\parallel I_1\cup \{2n_1+2n_2\}\cup K}L_\sigma}{\sum\limits_{\mu\parallel J_1\cup \{2n_1+2n_2\}\cup K}L_\mu}\\
  &= \frac{\sum\limits_{\substack{F_A\subset\Gamma_A : \sigma(\Gamma_A) \parallel I_1\\ F_B\subset\Gamma_B : \sigma(\Gamma_A) \parallel K}}\wt(F_A)\wt(F_B)}{\sum\limits_{\substack{F_A\subset\Gamma_A : \sigma(\Gamma_A) \parallel J_1\\F_B\subset\Gamma_B : \sigma(\Gamma_A) \parallel K}}\wt(F_A)\wt(F_B)}\\
  &= \frac{\bigg(\sum\limits_{F_A\subset\Gamma_A : \sigma(\Gamma_A) \parallel I_1}\wt(F_A)\bigg)\bigg(\sum\limits_{F_B\subset\Gamma_B : \sigma(\Gamma_A) \parallel K}\wt(F_B)\bigg)}{\bigg(\sum\limits_{F_A\subset\Gamma_A : \sigma(\Gamma_A) \parallel J_1}\wt(F_A)\bigg)\bigg(\sum\limits_{F_B\subset\Gamma_B : \sigma(\Gamma_A) \parallel K}\wt(F_B)\bigg)}\\
  &= \frac{\sum\limits_{F_A\subset\Gamma_A : \sigma(\Gamma_A) \parallel I_1}\wt(F_A)}{\sum\limits_{F_A\subset\Gamma_A : \sigma(\Gamma_A) \parallel J_1}\wt(F_A)}.
\end{align*}
This is clearly independent of the choice of $K$. Also, as an immediate consequence, the cocycle identity $\frac{\Delta_I(A)}{\Delta_J(A)}\cdot \frac{\Delta_J(A)}{\Delta_K(A)}\cdot  \frac{\Delta_I(A)}{\Delta_K(A)}=1$ holds for the image of $\Psi$ under the defining formula in $(2)$. Thus, the ratios are well-defined rational functions on $\chi_\mu$ and they collectively define a rational morphism $\Psi$ from $\chi_\mu$ to the projective space. This also directly shows that $\Psi$, restricted to nonnegative points, maps $\Gamma_A\sqcup \Gamma_B$ to $(\Gamma_A, \Gamma_B)$.

Now, we show that the rational morphism to projective space is, in fact, regular. Since $\chi_\mu$ is an reduced and irreducible variety by Theorem \ref{Main1}.3, rational morphisms have a maximal domain of definition given by the union of domain of representatives. Notice that on the nonvanishing of $\Delta_{{J}\cup\{2n_1+2n_2\}\cup K}$, the morphism $\Psi$ is well-defined by setting $\Delta_{J}:=1$ and $\Delta_{I}:=\Delta_{J}\cdot\frac{\Delta_{{I}\cup\{2n_1+2n_2\}\cup K}(C)}{\Delta_{{J}\cup\{2n_1+2n_2\}\cup K}(C)} $ for all $I$. So it suffices to show that the nonvanishing of $\Delta_{{J}\cup\{2n_1+2n_2\}\cup K}$'s cover $\chi_\mu$.

Fix $C\in\chi_\mu$. Then, $C\in \mathring{\chi}_{\tau}$ for some $\tau\leq \mu$. By the discussion above the theorem, $\tau=\tau_1\sqcup \tau_2$. By Lemma \ref{pairing to necklace}, the Grassmann necklace of $\tau$ has that ${I_{2n_1+1}(\tau)}=J_0\cup\{2n_1+2n_2\}\cup K_0$, where $J_0$ is a subset of size $n_1-1$ in $\{1, 2,\dots, 2n_1-1\}$ and $K_0$ is a subset of size $ n_2-1$ in $\{2n_1+1, 2n_1+2, \dots, 2n_1+2n_2-1\}$. Thus, we have identified a nonvanishing Pl\"ucker for each fixed $C\in \mathring{\chi}_{\tau}$. 

So far, we showed that $\Psi$ is a well defined morphism from $\chi_\mu$ to the product of projective spaces. To show that the scheme theoretic image is $\chi_{n_1}\times \chi_{n_2}$, we can use the same argument as for $\Phi$. The source is reduced and irreducible by Theorem \ref{Main1}.1. We already checked that the restriction of $\Psi$ to totally nonnegative points of $\chi_{\mu}$ maps to $(\chi_{n_1})_{\geq0} \times (\chi_{n_2})_{\geq0}$, which is Zariski dense in $\chi_{n_1}\times \chi_{n_2}$ by Theorem \ref{Main1}.2 and Theorem \ref{Main1}.5. Finally, dimensions match by Theorem \ref{Main1}.1 and Theorem \ref{Main1}.5. This shows that $\Psi$ defines a morphism $\chi_{\mu}\to \chi_{n_1}\times \chi_{n_2}$.

\textbf{Morphisms $\Phi$ and $\Psi$ are two sided inverses.} The fact that $\Psi\circ\Phi=Id$ is a basic alegbraic manipulation. Notice that $\Phi\circ\Psi$ restricted to totally nonnegative points of $\chi_\mu$ is the identity. Reduced and separated shows that $\Phi\circ\Psi$ must be identity on the Zariski closure of $(\chi_\mu)_{\geq0}$, which is $\chi_\mu$.

\textbf{Morphisms $\Phi$ and $\Psi$ respect the electroid stratification.} From (3), it is clear that on totally nonnegative points $\Phi((\chi_{\tau_1})_{\geq 0}\times(\chi_{\tau_2})_{\geq 0}) = (\chi_{\tau_1\sqcup \tau_2})_{\geq 0}$ as schemes. Consider the restriction $\Phi|_{\chi_{\tau_1}\times\chi_{\tau_2}}$. Since the source is reduced and irreducible, the scheme-theoretic image is the Zariski closure of the set-theoretic image, is irreducible, and has dimension at most $c(\tau_1)+c(\tau_2)$. Thus, the image contains the Zariski closure of $(\chi_{\tau_1\sqcup \tau_2})_{\geq 0}$ which is $\chi_{\tau_1\sqcup \tau_2}$. As the dimension of $\chi_{\tau_1\sqcup \tau_2}$ is $c(\tau_1)+c(\tau_2)$, the image of $\Phi|_{\chi_{\tau_1}\times\chi_{\tau_2}}$ is $\chi_{\tau_1\sqcup \tau_2}$. Thus, $\Phi$ restricts to an isomorphism $\chi_{\tau_1}\times \chi_{\tau_2} \simeq \chi_{\tau_1\sqcup \tau_2}$. 

Recall that electroid varieties form a stratification. Thus, the image $\Phi(\Xo_{\tau_1}\times \Xo_{\tau_2})$ equals the complement of $\bigcup \Phi(\chi_{\sigma_1}\times \chi_{\sigma_2})$, where the union is taken over all pairs $(\sigma_1,\sigma_2)\not=(\tau_1,\tau_2)$ such that $\sigma_1\leq \tau_1,\sigma_2\leq \tau_2$. We just showed that $\Phi(\chi_{\sigma_1}\times \chi_{\sigma_2})=\chi_{\sigma_1\sqcup \sigma_2}$. Thus, the image of $\Xo_{\tau_1}\times \Xo_{\tau_2} $ is the complement of the union of all such $\chi_{\sigma_1\sqcup \sigma_2}$. Since the product of two stratifications gives a stratification, the image equals $ \Xo_{\tau_1\sqcup \tau_2}$. Thus, $\Phi$ restricts to an isomorphism $\Xo_{\tau_1}\times \Xo_{\tau_2} \simeq \Xo_{\tau_1\sqcup \tau_2}$ as well.
\end{proof}

\begin{cor}\label{isomorphism of lower electroid space}
Let $\tau$ be a pairing on $[2n]$ with some strand $(i,i+1)$ and all other strands maximally crossed. Then, there exists an isomorphism between the closed electroid variety $\chi_\tau\subset \chi_n$ and $\chi_{n-1}$, respecting the stratification on both sides by electroid
varieties. Furthermore, restricted to totally nonnegative points, this isomorphism is given by removing an isolated boundary vertex or merging two adjacent shorted boundary vertices, depending on the parity of $i$.
\label{strandi,i+1}
\end{cor}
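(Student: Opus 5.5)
We reduce to Theorem~\ref{splicing} with $n_1=1$ and $n_2=n-1$, using the cyclic symmetry of Subsection~\ref{cyclic symmetry} to move the short strand into position.

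\textbf{Step 1: cyclic rotation.} The cyclic shift acts on $\chi_n$ and carries $\chi_\tau$ isomorphically onto $\chi_{\tau'}$, where $\tau'$ is the rotated pairing. It preserves the stratification and the totally nonnegative part. So we may rotate $\tau$ so that its short strand becomes $(1,2)$ or $(2n,1)$.
\begin{itemize}
\item If $i$ is odd, the strand $(i,i+1)$ surrounds the vertex $v_{(i+1)/2}$, and we rotate it to $(1,2)$, which surrounds $v_1$.
\item If $i$ is even, the strand separates two consecutive vertices, and we rotate it to $(2n,1)$. A further rotation by $2$ is harmless, so only the parity class of $i$ matters.
\end{itemize}

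\textbf{Step 2: the case of strand $(1,2)$.} Since $\chi_1$ is a point (the unique pairing on $[2]$ is $(1,2)$), Theorem~\ref{splicing}(4) with $\tau_1=(1,2)$ and $\tau_2=\mu_2$ gives $\chi_{n-1}\cong \chi_1\times\chi_{n-1}\cong\chi_\mu$. Here $\mu=(1,2)\sqcup\mu_2$, and $\mu_2$ is the maximal pairing on $\{3,\dots,2n\}$.

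We check that $\mu=\tau$. Both pairings contain $(1,2)$, and all their remaining strands are maximally crossed. The maximal pairing on a set of $2n-2$ cyclically consecutive points is unique, so $\mu=\tau$.

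The stratification statement is Theorem~\ref{splicing}(4), because the lower order ideal of $\mu$ consists exactly of the pairings $(1,2)\sqcup\tau_2$. By Theorem~\ref{splicing}(3), on totally nonnegative points the map $\Psi$ sends $\Gamma_A\sqcup\Gamma_B$ to $\Gamma_B$. Here $\Gamma_A$ is the single isolated vertex $v_1$, so the isomorphism removes an isolated boundary vertex.

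\textbf{Step 3: the case of strand $(2n,1)$.} We use Grassmannian duality (Subsection~\ref{Grassmannian Duality}) together with the dual cactus network. By Section 2.5 of \cite{CGS2021}, a shift by one exchanges each cactus network with its dual. Under duality, an isolated vertex becomes a pair of adjacent boundary vertices that are shorted. Rotating by one therefore converts the even case into the odd case.

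Concretely, composing the rotation by one with the isomorphism from Step 2 gives the required isomorphism $\chi_\tau\cong\chi_{n-1}$. On cactus networks it acts in three stages:
\begin{enumerate}
\item dualize, so the short strand now surrounds an isolated vertex;
\item delete that isolated vertex;
\item dualize back in $\chi_{n-1}$, using the cyclic action on $\chi_{n-1}$ to restore the labelling.
\end{enumerate}
The net effect merges the two adjacent shorted vertices. The stratification is respected because every map in the composite respects it.

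\textbf{Main obstacle.} The hard part is the bookkeeping in Step 3. One must confirm that planar duality of cactus networks turns the isolated-vertex picture into the shorted-pair picture, with matching labels, and that it commutes with deleting a vertex. Theorem~\ref{splicing} and the rotation argument are otherwise formal.

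I would first settle this on the level of medial pairings. Rotating $(2n,1)$ by one step yields $(1,2)$, and in the medial graph, shorting $v_1$ with $v_n$ produces exactly the strand $(2n,1)$. I would then invoke \cite{CGS2021} for the statement at the level of networks, rather than recomputing Plücker coordinates.
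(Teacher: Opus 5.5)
Your proposal is correct and is essentially the paper's proof: use the cyclic symmetry of Subsection~\ref{cyclic symmetry} to move the short strand to $(1,2)$, then apply Theorem~\ref{t:splicing} with $n_1=1$; your Step~3 spells out, via the fact that a one-step shift sends a cactus network to its dual, the parity-dependent description on nonnegative points that the paper leaves implicit. The only blemish is the appeal to the Grassmannian duality $\Gr(n-1,2n)\cong\Gr(n+1,2n)$ in Step~3, which plays no role there---only the cyclic shift is needed.
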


\begin{proof}
By cyclic symmetry as in Subsection \ref{cyclic symmetry}, we can assume that $i=1$. Then, it is a direcct consequence of Theorem \ref{t:splicing} where $n_1=1$.
\end{proof}
\begin{proof}[Proof of Theorem \ref{Main1}.7]
We follow the same procedure as in \cite[Section 5.10]{Lam2018}, which was stated only for totally nonnegative points. However, their approach generalizes once we have Proposition \ref{p:uncrossing adjacent} defined for open electroid varieties and Corollary \ref{isomorphism of lower electroid space} defined for closed electroid varieties.

Every pairing $\tau$ either contains strands $i$ and $i+1$ which cross or contains a strand from $i$ to $i+1$. To see this, let $i$ and $j$ be strands that cross with $i<j$ and $j-i$ minimal. If $j\not=i+1$, then all strands with one endpoint between $i$ and $j$ have both endpoints between $i$ and $j$. By the minimality of $j-i$, none of these strands cross each other and so $i+1$ is connected to $j-1$, $i+2$ is connected to $j-2$, etc. Thus, there is a strand from $(i+j-1)/2$ to $(i+j+1)/2$.

From this observation, the proof follows immediately. If strands $i$ and $i+1$ cross, let $\tau'$ be pairing such that $\tau'\lessdot \tau$ is an $i$-crossing pair. Proposition \ref{p:uncrossing adjacent} gives that $\mathring{\chi}_\tau\simeq \mathring{\chi}_{\tau'}\times\CC^*$. Otherwise, there's a strand from $i$ to $i+1$; let $\tau''$ be the pairing on $[2n-2]$ formed by removing strand $(i,i+1)$ from $\tau$. Corollary \ref{strandi,i+1} gives that $\mathring{\chi}_\tau\simeq \mathring{\chi}_{\tau''}$. Furthermore, both isomorphisms, restricted to totally nonnegative points, are given by contracting or deleting an edge, and removing an isolated vertex or merging two adjacent shorted vertices, respectively. The result holds by induction on the dimension. 
\end{proof}

The isomorphism $\Phi$ has a simple expression using matrix representatives.

\begin{prop}The isomorphism $\Phi: \chi_{n_1}\times\chi_{n_2}\xrightarrow{\sim} \chi_{\mu}$ stated in Theorem \ref{splicing} can be represented by a map of matrices.
For $(A,B)\in\chi_{\mu_1}\times \chi_{\mu_2}\subset \Gr(n_1-1,2n_1)\times \Gr(n_2-1,2n_2)$, select matrix representatives of $A$ and $B$. Then the point $\Phi(A,B)\in \Gr(n_1+n_2-1,2n_1+2n_2)$ is represented by the $(n_1+n_2-1)\times (2n_1+2n_2)$ matrix, $$ C = \left[\begin{array}{cccccccc|cccc}
0 && &\dots& & & 0 & (-1)^{n_1+1} & 0 & \dots & 0 & (-1)^{n_1+n_2}\\
\hline
 &  &  &  &  & & && && & \\
  &  &  &  &  & A & & & && 0&\\
   &  &  &  &  & & && & \\
   \hline
    &  &  &  &  &  & & & && &\\
        &  &  &  &  & 0& & & && B & \\
\end{array}\right].$$

\end{prop}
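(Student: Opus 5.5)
We compute the maximal minors of the displayed block matrix $C$ directly and compare them with the formula in Theorem~\ref{splicing}(1). The row-span of $C$ is well defined: rescaling or changing the representative of $A$ or $B$ multiplies the $A$-block or $B$-block rows by an invertible matrix. This changes all Pl\"ucker coordinates by the common factor $\det$, so it suffices to check that $\Delta_I(C)$ agrees with $\Delta_I(\Phi(A,B))$ up to a global sign or scalar independent of $I$. Write $I=I_1\sqcup I_2$ as in the theorem. Let $e$ be the top row, which is supported only on the columns $2n_1$ and $2n_1+2n_2$.

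\textbf{Laplace expansion along the top row.} The minor $\Delta_I(C)$ vanishes unless $I$ meets $\{2n_1,2n_1+2n_2\}$. Expanding along $e$ gives at most two terms:
\begin{align*}
\Delta_I(C)=\;&\pm(-1)^{n_1+1}[2n_1\in I]\,\det\bigl(\text{rows of }A,B;\ \text{columns }I\setminus\{2n_1\}\bigr)\\
&\pm(-1)^{n_1+n_2}[2n_1+2n_2\in I]\,\det\bigl(\text{rows of }A,B;\ \text{columns }I\setminus\{2n_1+2n_2\}\bigr).
\end{align*}
Each remaining $(n_1+n_2-2)$-minor is block diagonal. It is therefore nonzero only when the column set has exactly $n_1-1$ elements in $[2n_1]$ and $n_2-1$ in the second block, and then it equals $\Delta(A)\Delta(B)$ on the corresponding pieces.

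\textbf{Matching cases.} The first term survives exactly when $|I_1|=n_1$ and $2n_1\in I_1$, and it gives $\Delta_{I_1\setminus\{2n_1\}}(A)\Delta_{I_2}(B)$. The second term survives exactly when $|I_2|=n_2$ and $2n_1+2n_2\in I_2$, giving $\Delta_{I_1}(A)\Delta_{I_2\setminus\{2n_1+2n_2\}}(B)$. Since $|I_1|+|I_2|=n_1+n_2-1$, the two conditions are mutually exclusive, so at most one term ever contributes. In every other case the minor is $0$. This is precisely the trichotomy in Theorem~\ref{splicing}(1).

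\textbf{Sign bookkeeping.} The remaining point, and the only real obstacle, is the sign. Expansion along the top row at column position $p$ (the position of $2n_1$ or of $2n_1+2n_2$ within $I$) contributes $(-1)^{1+p}$. Rearranging into block form costs a further sign counting the elements of $I_2$ that precede $2n_1$; no reordering is needed otherwise.

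In the first case, $2n_1$ is the last element of $I_1$, so $p=n_1$. The sign is $(-1)^{1+n_1}$, and the entry $(-1)^{n_1+1}$ of $C$ cancels it to $+1$.

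In the second case, $2n_1+2n_2$ is in position $p=|I|=n_1+n_2-1$, so the sign is $(-1)^{n_1+n_2}$, which the entry $(-1)^{n_1+n_2}$ cancels.

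So both cases come out with sign $+$, and $C$ represents $\Phi(A,B)$. I would present this as a short verification, double-checking the sign of the second case on the example $n_1=n_2=1$.
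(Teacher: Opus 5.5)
Your proposal is correct and is essentially the paper's proof: you expand $\Delta_I(C)$ along the top row, note that each surviving cofactor is block diagonal in the $A$- and $B$-rows and so is nonzero only when $|I_1|=n_1$ with $2n_1\in I_1$, or $|I_2|=n_2$ with $2n_1+2n_2\in I_2$, and check that the entries $(-1)^{n_1+1}$ and $(-1)^{n_1+n_2}$ cancel the Laplace signs at positions $n_1$ and $n_1+n_2-1$. Your remark that a Laplace term vanishes unless its blocks are square replaces the paper's separate rank arguments (its Case 3, and the subcase of Case 1a where both special columns lie in $I$), and your explicit sign count supplies what the paper only asserts.
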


\begin{proof}

Observe that the row span of $C$ only depends on the row span of $A$ and $B$, but not the matrices $A$ and $B$. Thus, the matrix map defines a rational map to $\Gr(n_1+n_2-1,2n_1+2n_2)$ whenever $C$ is full rank. To show that this rational map is regular and is equal to $\Phi$, it suffices to show that $\Delta_I(C) = \Delta_I(\Phi(A,B))$. We prove by doing case work according to the definition of $\Phi$.

As before, the decomposition $I=I_1\sqcup I_2$ is given by
\begin{align*}
I_1&=I\cap \{1,2,\dots, 2n_1\},\\
I_2&=I\cap \{2n_1+1,2n_1+2, \dots, 2n_1+2n_2\}.
\end{align*}

Given $J\subset[2n_1+2n_2]$, let $C_J$ denote the submatrix of $C$ given by taking the columns indexed by $J$. And similarly, one can define $A_{J_1}$ and $B_{J_2}$.

Case 1a: Suppose that $|I_1|=n_1$ and $2n_1\in I_1$. If $2n_1+2n_2\not\in I_2$, then by expanding along the top row we get 
\begin{align*}
    \Delta_I(C) &= (-1)^{n_1+1}(-1)^{{n_1+1}}\Delta_{I_1\setminus\{2n_1\}}(A)\Delta_{I_2}(B)\\
    &=\Delta_{I_1\setminus\{2n_1\}}(A)\Delta_{I_2}(B).
\end{align*}

If $n_1+n_2\in I_2$, then by expanding along the top row we get 
$$\Delta_I(C)= \Delta_{I_1\setminus\{2n_1\}}(A)\Delta_{I_2}(B) + (-1)^{n_1+n_2}(-1)^{n_1+n_2}\Delta_{I_1}(A)\Delta_{I_2\setminus\{n_1+n_2\}}(B),$$
which, since the submatrix $A_{I_1}$ is not full rank, becomes
$$\Delta_I(C) = \Delta_{I_1\setminus\{2n_1\}}(A)\Delta_{I_2}(B).$$

Case 1b: Suppose that $|I_2|=n_2$ and $2n_1+2n_2\in I_2$. Then, the proof proceeds as in Case 1a.

Case 2ab: Suppose that $|I_1|=n_1$ but $2n_1\not\in I_1$. Then, the top row of the submatrix $C_{I_1}$ is the zero vector. Thus, the submatrix $C_{I_1}$ fails to be full rank and so $C_I$ fails too. Hence,  $\Delta_I(C) = 0$. 

Case 2b: Suppose $|I_2|=n_2$ but $2n_1+2n_2\not\in I_2$, then, as in Case 2a, $\Delta_I(C)=0.$ 

Case 3: Suppose $|I_1|\not=n_1$ and$|I_2|\not=n_2$. Then, either $|I_1|> n_1$ or $|I_2|> n_2$; without loss of generality, assume $|I_1|> n_1$. Since the rank of $C_{I_1\setminus\{2n_1\}}$ is at most the rank of $A$, the rank of $C_{I_1}$ is at most $rk(A)+1 = n_1$. Thus, $C_{I_1}$ is not full rank and so $\Delta_I(C)=0$. 
\end{proof}

\begin{eg}
To illustrate how the map $\Phi$ works on Pl\"ucker coordinates, matrices, and cactus networks, we revisit the networks in Figure \ref{f:splicing networks}. Let $A$ be the point represented by $\Gamma_A$ and $B$ be the point represented by $\Gamma_B$. We now calculate $\Delta_{1467}(\Phi(A,B))$ using our various representations.

By the map on Pl\"ucker coordinates, 
\begin{align*}
\Delta_{1467}(\Phi(A,B)) &= \Delta_{14}(A)\Delta_7(B)\\
&=(L_{12|3}(\Gamma_A)+L_{13|2}(\Gamma_A))(L_{45}(\Gamma_B))\\
&=(0+b)(c)=bc.
\end{align*}

This aligns with the map on cactus networks, as 
\begin{align*}
\Delta_{1467}(\Gamma) &= \sum\limits_{\substack{\sigma_1\sqcup\sigma_2\parallel 1467\\\sigma_1 \vdash\{1,2,3\}, \sigma_2\vdash\{4,5\}}}L_\sigma(\Gamma)\\
&=L_{13|2|45}(\Gamma)+L_{12|3|45}(\Gamma)=bc.
\end{align*}

Lastly, $A$ and $B$ can be represented by the matrices, $$ A = \left[\begin{array}{cccccc}
b& 1 & 0 & 0 & -b & -1\\
-b&0&a&1&a+b&1
\end{array}\right]$$

$$B = \left[\begin{array}{cccc}
c & 0 & c & 0 \\
\end{array}\right]$$

with image, 

$$ \Phi(A,B) = \left[\begin{array}{cccccc|cccc}
0&0&0&0&0&1&0&0&0&-1\\
\hline
b& 1 & 0 & 0 & -b & -1&0&0&0&0\\
-b&0&a&1&a+b&1&0&0&0&0\\
\hline
0&0&0&0&0&0&c&0&c&0\\
\end{array}\right].$$

Calculating the minor indexed by columns $1,4,6,7$ yields $\Delta_{1467}(\Phi(A,B)) = bc$.

\label{e:3maps}
\end{eg}

\section{Open Problems}
\subsection{Cohen-Macaulayness, normality, and Frobenius Splitting}

We showed that closed electroid varieties are regular in codimension one. We also showed that they are compatibly split under the same Frobenius splitting, and thus they are semi-normal \cite[Proposition~1.2.5]{BrionKumar}. 

\begin{conj}
    Closed electroid varieties are Cohen-Macaulay. Thus, they are normal and are the only compatibly Frobenius split subvarieties in the electroid space $\chi$ under our splitting.
\end{conj}
Cohen-Macaulayness implies normality by Serre's criterion. Normality implies that they are all the compatibly split subvarieties by \cite[Theorem 5.3]{KLS11} and the fact that they form a stratification and the open electroids are regular.
    
\subsection{Positive Geometry}    
We formally state the conjecture by Thomas Lam mentioned in the beginning of Section \ref{section:A Splicing Isomorphism and Dense Tori}.
\begin{conj}
The electroid space $\chi_n$ is a normal positive geometry. The iterative boundary components are exactly the closed electroid varieties and boundary relations are exactly covering relations in the poset of electroids. Furthermore, on any closed electroid variety, its canonical form is given by pushing-forward the canonical form on the dense torus, via any grove measurement embedding given by Theorem \ref{Main1}.7. 
\end{conj}

\bibliographystyle{amsplain} 
\bibliography{arxiv}

\end{document}